\numberwithin{equation}{section}
\newtheorem{th1}{Theorem}[section]
\newtheorem{theorem}[th1]{Theorem}
\newtheorem{lemma}[th1]{Lemma}
\newtheorem{definition}[th1]{Definition}
\newtheorem{proposition}[th1]{Proposition}
\newcommand{\ve}{\varepsilon}
\newcommand{\vphi}{\varphi}
\newcommand{\as}{\backslash}
\begin{document}	
	\title{\bf Estimates of Nonnegative Solutions to Semilinear Elliptic Equations\thanks{This work is supported by the Research Laboratory in Mathematics: Deterministic and Stochastic Modeling - LAMMDA.  {\it Code: LR16ES13.}}}
	\date{}
	\author[1]{Khalifa El Mabrouk\thanks{E-mail address: \texttt{khalifa.elmabrouk@essths.u-sousse.tn }}}
	\author[2]{Basma Nayli\thanks{Corresponding author: \texttt{basma.nayli@essths.u-sousse.tn}}}
	\affil[1,2]{Higher School of Sciences and Technology of Hammam Sousse,\newline University of Sousse, Tunisia}
	\maketitle
	\vspace*{-1cm}
	\begin{abstract}
	Let  $L$ be a second order uniformly elliptic  differential operator in a domain $D$ of~$\mathbb{R}^{d}$, $\psi:\mathbb{R}_+\to \mathbb{R}_+$ be a nondecreasing continuous function and let  $\xi,g:D\to\mathbb{R}_+$ be locally bounded Borel measurable functions. Under appropriate conditions, we determine a function $\vphi$ with values in $]0,1]$ such that  for every nonnegative solution to inequality $-Lu+\xi\psi(u) \geq  g$ in $D$ 	and for every $x\in D$,
	$$
	u(x)\geq p(x)\,\varphi\left(\frac{G_D(\xi\psi(p))(x)}{p(x)}\right)
	$$
	where $p=G_Dg$ is the Green function of $g$. The function $\varphi$ is completely determined by $\psi$ and  does not depend on $L,D,\xi$ or $g$.
	\end{abstract}

	\noindent\textbf{Keywords:} Green function, Semilinear Dirichlet problem, $L-$harmonic function.
	
	\noindent\textbf{Mathematics Subject Classification:}  35J61, 35B45, 31B10.
	\section{Introduction}
	
	Consider a domain $D$ of $\mathbb{R}^d,d\geq 1,$ and a second order uniformly elliptic  differential operator 
	$$
	Lu=\sum_{i,j=1}^{d}a_{ij}\partial_{ij}u+\sum_{i=1}^{d}b_i\partial_{i}u,
	$$
	where coefficients  $a_{ij}$ and $b_i$ are  sufficiently smooth. We are concerned with   nonnegative continuous solutions, in the distributional sense,  to  inequalities of type  
	\begin{equation}\label{nl-ieq}
		-Lu+\xi\psi(u) \geq g \quad\mbox{in }D,
	\end{equation} 	
	where  $g,\xi: D\rightarrow \mathbb{R}_+$ are locally bounded  Borel measurable  functions and    $\psi:\mathbb{R}_+\rightarrow \mathbb{R}_+$ is a nondecreasing continuous function such that $\psi(0)=0$. If~$D$ is bounded and regular, it is well known that, for every nonnegative continuous function $f$ on $\partial D$,  the   Poisson-Dirichlet problem 
		\begin{equation}\label{l-dir}
		\left\{\begin{array}{rcll}
			-Ls &=& g &\mbox{in }D,
			\\ s &=& f &\mbox{on }\partial D.
		\end{array}\right.
	\end{equation}  	
has a unique solution $s=S_D(f,g)$. Observe that $H_Df=S_D(f,0)$ is the $L-$harmonic extension of $f$ to $\overline{D}$ and  $G_Dg=S_D(0,g)$ is the Green potential of $g$.

 In this paper, it will first be shown that   for every nonnegative continuous  function~$f$ on  $\partial D$,		 the semilinear boundary value  problem
	\begin{equation}\label{psl}
		\left\{\begin{array}{rcll}
			-Lu+\xi\psi(u) &=& g &\mbox{in }D,
			\\u &=& f &\mbox{on }\partial D,
		\end{array}\right.
	\end{equation} 
	admits one and only one  solution which will be denoted by $U_D^\xi(f,g)$. Besides, it is simple to see that 
	\begin{equation}
		U_D^\xi(f,g)\leq S_D(f,g)\quad\mbox{in }D.
	\end{equation}
	Our main goal  is to establish a lower bound estimate of $U_D^\xi (f,g)$ as a function of~$S_D(f,g)$, which especially allows us to conclude lower bound estimates of any nonnegative solution to  inequality~\eqref{nl-ieq}.

	It should be noted that semilinear problems of type~\eqref{psl} play a vital role in the modeling of many phenomena in the fields of physics, astrophysics, logistics and differential geometry  (see \cite{marcus2014nonlinear,dynkin2002diffusions}). We also notice that~\eqref{psl} has been investigated in many papers, under various aspects, which deal with  existence and uniqueness as well as the estimation of its solution (see for instance \cite{ garcia2009existence, mohammed2010ground, fredj2013comparison,   Cao2016, Cao2017, hirata2018, ghardallou2019positive,  hirata2021} and references therein).

	Taking $\xi$ a positive constant in~$D$ ($\xi= 2$), Ben Fredj and the first author studied   problem~\eqref{psl} in~\cite{fredj2013comparison} when $L=\Delta$ is  the Laplacian operator  and $g$ is the null function. They proved  that for every nonnegative continuous function $f$ on $\partial D$,  \eqref{psl} has a unique solution~$u$ and  there exists a real constant $\kappa>0$ such that 	for all $x\in D$,
	\begin{equation}
	\kappa H_Df(x)\leq u(x) \leq H_Df(x).
	\end{equation}
	
	The case of $\xi\leq 0$  is recently investigated by  Grigor'yan and Verbitsky in~\cite{grigor2020}  where they  gave a real positive function $\varrho$, depending only on~$\psi$, such that if $S_D(f,g)\geq 1$ in $D$ then  every  nonnegative solution $u$ to problem~\eqref{psl} satisfies for all $x\in D$,
	\begin{equation}\label{urho}
		u(x)\geq \varrho\left(G_D\xi \right(x)).
	\end{equation}
	In this context,  M\^aagli and Zribi \cite{zribi2001} have already treated the special   setting where $L=\Delta$ and $f,g$ are identically zero.	

For  signed functions $\xi$, Grigor'yan and Verbitsky obtained in~\cite{grigor2019pointwise} 
	lower bounded estimates for nonnegative    solutions to 
	\begin{equation}\label{sup-sol-g}
		-Lu+\xi u^\gamma \geq  g \mbox{ in }D,
	\end{equation}  
	which are twice continuously differentiable  in $D$.

	In the current paper, we deal with nonnegative solutions  to inequality~\eqref{nl-ieq} in the distributional sense. We only consider functions $\xi$ that are nonnegative everywhere, however~$D$ can be any  Greenian domain of $\mathbb{R}^d$, not  necessary bounded, and $\psi$ belongs to  a  large class of  nondecreasing  functions  which contains  in addition to  polynomial functions, logarithmic functions as well as  exponential functions.
	
	We further  assume that $\psi$ is continuously differentiable in $ ]0,\infty[$ and there exists $c>0$ such that 
	\begin{equation}
		\psi(rt)\leq c\psi(r)\psi(t)\quad \mbox{for all }r\in[0,1], t\in\mathbb{R}_{+}, 
	\end{equation}
and consider the function   $\Theta:]0,1]\rightarrow [0, \ell[$ ($\ell$ may be infinite) given, for every $t\in ]0,1]$ by
	\begin{equation*}
\Theta(t)=\int_{t}^{1}\frac{1}{c\,\psi(s)}\,ds\quad\mbox{where}\quad	\ell =\int_{0}^{1}\frac{1}{c\,\psi(s)}\,ds.
\end{equation*}	
We obviously see that   $\Theta$ admits an inverse function which will be denoted by $\varphi$ and extended  to~$\mathbb{R}_+$ by setting $\varphi(t)=0$ if $t\geq \ell$. 
 Let $p=G_Dg$ and assume that   $0<p<\infty $ in~$D$. We will prove that every nonnegative solutions~$u$ to~\eqref{nl-ieq} satisfies, for all $x\in D$,
\begin{equation}\label{estine}
	u(x)\geq p(x)\varphi\left(\frac{G_D(\xi\psi(p))(x)}{p(x)}\right).	
\end{equation}
To do this, we will go through lower bounded estimates of solution $U_D^\xi(f,g)$ to the boundary value problem~\eqref{psl}. In this regard,  we had to carry out  a detailed study of $L$-harmonic functions in order to show, when $D$ is bounded regular, that for any  continuous function $f:\partial D\to\mathbb{R}_+$ such that $s=S_D(f,g)$ is finite not identically zero and $G_D(\xi\psi(s))$ is finite in~$D$, the integral equation 
\begin{equation}\label{eqint}
u+G_D\big(\xi\psi(u)\big)=s\quad\mbox{in }D,
\end{equation}
possesses a unique solution $u$ and that  for every $x\in D$,
\begin{equation}\label{estin}
s(x)\varphi\Big(\frac{G_D(\xi\psi(s))(x)}{s(x)}\Big)\leq u(x) \leq s(x).
\end{equation}
Notice that in the setting where Green potentials  $G_D\xi$ and $ G_Dg$  are continuous in $\overline{D}$ vanishing on $\partial D$, $u$ is a solution to~\eqref{eqint} if an only if $u$ is a solution to problem~\eqref{psl}.

For every $n\geq 1$,  we define  $v_n=U_{D_n}^\xi(0,g)$  where 
$(D_n)_{n\geq 1}$ is a sequence of  bounded regular domains such that for every $n\geq 1$,
$$
\overline{D_n}\subset D_{n+1} \quad\mbox{and}\quad \cup_{n\geq 1}D_n=D.
$$
Define $p_n=G_{D_n}g$ and choose  a subsequence $(p_{n_k})_{k\geq 1}$ of $(p_n)_{n\geq 1}$ such that $p_{n_k}>0$ for all~$k\geq 1$. We then apply~\eqref{estin} to $v_{n_k}$ in $D_{n_k}$, to obtain that for all $k\geq 1$,
\begin{equation}\label{estst}
v_{n_k}\geq p_{n_k} \vphi\left(\frac{G_{D_{n_k}}(\xi\psi(p_{n_k}))}{p_{n_k}}\right).  
\end{equation}
Given a solution $u$ to inequality~\eqref{nl-ieq}, it follows from a comparison principal that $u\geq v_{n_k}$ in $D_{n_k}$, which in turn allows us to get estimate~\eqref{estine}  in view of some  convergence theorems  and  monotonicities of functions~$\psi$ and $\varphi$.

Estimates of type~\eqref{estine} encompass  results given by Grigor'yan and Verbitsky  \cite{grigor2019pointwise}  for $\psi(t)=t^\gamma$ with $\gamma>0$. In fact, in this particular case,  simple computation of $\varphi$ yields   that every nonnegative solution $u$ to~\eqref{nl-ieq} satisfies, for all $x\in D$,
\begin{equation}
	u(x)\geq \left\{\begin{array}{ll}
		p(x)\left(1+(\gamma-1)\frac{G_D\left(p^\gamma \xi\right)(x)}{p(x)}\right)_+^{1/(1-\gamma)} & \mbox{if }\gamma<1,
		\\ p(x)\exp\left({-\frac{G_D(\xi p)(x)}{p(x)}}\right) & \mbox{if }\gamma=1, 
		\\ p(x)\left(1+(\gamma-1)\frac{G_D(p^\gamma \xi)(x)}{p(x)}\right)^{1/(1-\gamma)} & \mbox{if }\gamma>1.
	\end{array}\right.
\end{equation}	
 Our approach is still valid for various functions $\psi$ such as 
 $$
 \psi(t)=at+bt^\gamma,  \quad \psi(t)=\sinh t\quad \mbox{and} \quad \psi(t)=a(1+bt)\log(1+bt)
 $$
  where $a,b,\gamma>0$. These examples are presented and discussed in detail in the last section, where main results are stated and proved. In  section~2, we introduce some basic notions that are useful to  study the Dirichlet problem~\eqref{l-dir}. After treating fundamentals properties of the Green potential in section~3, we study in section~4 the semilinear problem~\eqref{psl} in regular domains. We prove  the existence and uniqueness of the nonnegative continuous solution to problem~\eqref{psl} and investigate its regularity. In section~5, we are  concerned with  estimates of the solution to~\eqref{psl} as well as estimates of nonnegative  solutions of the inequality~\eqref{nl-ieq}. 
	
	\section{$L-$harmonic functions}
	For every domain $D$ in $\mathbb{R}^d$,  $d\geq 1$, let $\mathcal{B}(D)$ be the set of all real-valued Borel measurable functions in~$D$ and let~$\mathcal{B}_{b}(D)$ (resp.~$\mathcal{B}^+(D)$) be the set of all  functions in~$\mathcal{B}(D)$ which are bounded (resp. nonnegative) in~$D$. More generally, given any class  of numerical functions~$\mathcal{F}$, we will denote by $\mathcal{F}_{b}$ (resp.~$\mathcal{F}^{+}$) the set of all functions in~$\mathcal{F}$ which are bounded (resp. nonnegative). We define~$\mathcal{C}(D)$ to be the set of all real-valued continuous functions in~$D$ and~$\mathcal{C}_{c}^{\infty}(D)$ to be the set of all infinity differentiable functions with compact support in $D$. For every $1\leq i,j\leq d$, $\partial_{i}$ stands for the first partial derivative with respect to the coordinate $x_{i}$ of $x=(x_{1},....,x_{d})\in \mathbb{R}^d$,  and $\partial_{ij}:=\partial_{i}\partial_{j}$.
	Let $\mathcal{C}^{1}(D)$ (resp. $\mathcal{C}^{2}(D)$) be the set of all functions $u\in\mathcal{C}(D)$ such that $\partial_{i}u\in\mathcal{C}(D)$  (resp. $\partial_{i}u, \partial_{ij}u\in\mathcal{C}(D)$) for every $1\leq i,j\leq d$. If $D$ is bounded, we denote by  $\mathcal{C}_{0}(D)$ the set of all functions $u\in \mathcal{C}(D)$  such that  $\lim_{x\in D, x\rightarrow z}u(x)=0$ for all $z\in \partial D$.
	
	Let $D$ be a domain in ${\mathbb{R}}^d$ and let   $0<\alpha \leq 1$. A real-valued function $u$ is called  $\alpha-$H\"{o}lder  continuous  in $\overline{D}$ if there exists a real constant $C>0$ such that 
	$$
	|u(x)-u(y)|\leq C |x-y|^\alpha\quad\mbox{for all }x,y\in\overline{D}.
	$$
	The set of all  $\alpha-$H\"{o}lder  continuous functions  in $\overline{D}$ is denoted by $\mathcal{C}^{\alpha}(\overline{D})$. We say that $u$ is locally $\alpha-$H\"{o}lder  continuous in the domain $D$ and we write $u\in\mathcal{C}^{\alpha}(D)$  if $u\in\mathcal{C}^{\alpha}(\overline{V})$ for all domains $V\Subset D$. Here and in the sequel, writing  $V\Subset D$ means that $\overline{V}$ is compact with closure $\overline{V}\subset D$.  We also define $\mathcal{C}^{1,\alpha}(D)$ (resp. $\mathcal{C}^{2,\alpha}(D)$) to be the class of all functions $u\in \mathcal{C}^{\alpha}(D)$ such that $\partial_{i}u\in \mathcal{C}^{\alpha}(D)$ (resp. $\partial_{i}u, \partial_{ij}u\in \mathcal{C}^{\alpha}(D)$) for all $1\leq i,j\leq d$. 
	
	We consider a second order  differential operator of type
	\begin{equation}
		Lu:=\sum\limits_{i,j=1}^{d}a_{ij}\partial_{ij}u+\sum\limits_{i=1}^{d}b_{i}\partial_{i}u
	\end{equation}
	where $a_{ij}\in\mathcal{C}^{2,\alpha}(\mathbb{R}^d)$ and  $b_{i}\in\mathcal{C}^{1,\alpha}(\mathbb{R}^d)$ for all $1\leq i,j\leq d$ and we assume that  $L$ is uniformly elliptic, that is, there exists a real  constant $\beta>0$ such that for all $x\in\mathbb{R}^d$ and for all $\xi=(\xi_1,\cdots,\xi_d)\in\mathbb{R}^d$,
	$$
	\sum_{i,j=1}^{d}a_{ij}(x)\xi_i\xi_j\geq \beta \,|\xi|^2.
	$$
	Given a Radon measure $\mu$ in $D$ and a locally Lebesgue integrable function $u$ in the open set $D$, equation
	\begin{equation}
		Lu=\mu \mbox{ in } D
	\end{equation} 
	should be understood in the distributional sense, that is,  for all $\varphi \in \mathcal{C}_{c}^{\infty}(D)$,
	\begin{equation}
		\int_{D}uL^{*}\varphi(x)\,dx = \int_{D}\varphi(x)\,d\mu(x) 
	\end{equation}
	where $L^{*}$ denotes  the adjoin operator of $L$ given by
	\begin{equation}
		L^{*}u:=\sum\limits_{i,j=1}^{d}\partial_{ij}(a_{ij}u)-\sum_{i=1}^{d}\partial_{i}(b_{i}u).
	\end{equation}
	We call $L-$harmonic in  $D$ any continuous function $u:D\to\mathbb{R}$ such that  $Lu=0$ in $D$ and define $\mathcal{H}_{L}(D)$ to be the set of all $L-$harmonic functions in $D$.
	We also assume that $L$ is hypoelliptic in the sense that $\mathcal{H}_{L}(D)\subset \mathcal{C}^2(D)$. 
	
	An open bounded set~$D\subset\mathbb{R}^d$  is called regular if for each continuous function $~{f:\partial D\rightarrow \mathbb{R}}$, the Dirichlet problem
	$$
	\left\{\begin{array}{rl}
		Lh= 0 &\mbox{ in } D,
		\\h=f &\mbox{ on } \partial D,
	\end{array}\right.
	$$
	has a unique solution  which will be denoted by $H_{D}f.$ In this setting, for every $x\in D$, the map $f\mapsto H_{D}f(x)$ defines a probability measure $H_{D}(x,\cdot)$ on $\partial D$ which is called harmonic measure relative to $x$ and $D$. For $x\in \mathbb{R}^d\as D$, it will be convenient to put $H_D(x,\cdot):=\delta_{x}$ where $\delta_{x}$ denotes the Dirac measure  concentrated at the point $x$. It is well known (see \cite[p.106]{gilbarg1998elliptic}) that $D$ is regular provided it satisfies the exterior sphere condition at every boundary point, that is, for every  $z\in\partial D$ there exists a ball $B$ of $\mathbb{R}^{d}$ such that $B\cap D=\emptyset$ and $z\in\partial B$. This immediately yields that every ball $B\subset \mathbb{R}^d$ is regular and consequently a continuous function $u$ is $L-$harmonic in $D$ if and only if $H_{B}u=u$ for all balls $B\Subset D$.
	
	Due to J. Serrin \cite{serrin1954harnack}, nonnegative $L-$harmonic functions in the domain $D$ satisfy  the following Harnack's inequality: For  every compact set $\Gamma\subset D$, there exists a real constant $C\geq 1$  (depending only on $L$, $\Gamma$ and $D$) such that for all $u\in\mathcal{H}_{L}^{+}(D)$,
	\begin{equation}\label{harnack}
		\sup_{x\in\Gamma}u(x)\leq C\inf_{x\in\Gamma}u(x).
	\end{equation}
	\begin{theorem}\label{bre_cv}
		Let $D\subset \mathbb{R}^d$ be a domain and let $(h_{n})_{n\geq 1}$ be a  nondecreasing  (or nonincreasing) sequence of  $L-$harmonic functions in $D$ such that $h:=\lim\limits_{n\rightarrow \infty}h_{n}$ is finite in some point $x_{0}\in D$. Then $h$ is $L-$harmonic in $D$.
	\end{theorem}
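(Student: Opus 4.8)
The plan is to reduce to the case of a nonnegative nondecreasing sequence and then run the classical Harnack convergence argument, using Serrin's Harnack inequality~\eqref{harnack}. First I would dispose of the nonincreasing case by passing to $(-h_{n})$, which is nondecreasing, still $L$-harmonic by linearity of $L$, and still has a finite limit at $x_{0}$; so there is no loss in assuming $(h_{n})$ is nondecreasing. Then, replacing $h_{n}$ by $h_{n}-h_{1}$, I obtain a nondecreasing sequence in $\mathcal{H}_{L}^{+}(D)$ with finite limit $h-h_{1}$ at $x_{0}$; since $h$ will be $L$-harmonic as soon as $h-h_{1}$ is, I may further assume $h_{n}\in\mathcal{H}_{L}^{+}(D)$ for every $n$.

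Next I would prove that $(h_{n})$ converges locally uniformly in $D$. Given a compact set $K\subset D$, I would apply~\eqref{harnack} to the nonnegative $L$-harmonic functions $h_{m}-h_{n}$ (for $m\geq n$) on the compact set $\Gamma=K\cup\{x_{0}\}$, obtaining $\sup_{K}(h_{m}-h_{n})\leq C\,(h_{m}(x_{0})-h_{n}(x_{0}))$ for a constant $C$ depending only on $L,\Gamma,D$. Because $(h_{n}(x_{0}))_{n}$ is monotone and bounded it is a Cauchy sequence, so this estimate makes $(h_{n})$ uniformly Cauchy on $K$; as $K$ is arbitrary, $h$ is finite everywhere in $D$ and, being a locally uniform limit of continuous functions, $h\in\mathcal{C}(D)$.

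Finally I would verify the mean value identity $H_{B}h=h$ for every ball $B\Subset D$, which, by the characterization of $L$-harmonic functions recalled in Section~2, gives $h\in\mathcal{H}_{L}(D)$. For a fixed such ball one has $H_{B}h_{n}=h_{n}$ in $B$ since $h_{n}$ is $L$-harmonic; as $h_{n}\to h$ uniformly on $\overline{B}$ and each $H_{B}(x,\cdot)$ is a probability measure on $\partial B$, passing to the limit under the integral yields $H_{B}h(x)=\lim_{n}H_{B}h_{n}(x)=\lim_{n}h_{n}(x)=h(x)$ for all $x\in B$. I do not expect a serious obstacle: this is the standard Harnack convergence scheme, and the only points requiring some care are the two initial reductions and the interchange of limit and integration against $H_{B}(x,\cdot)$, which is legitimate precisely because Harnack's inequality furnishes uniform (not merely pointwise) convergence on $\overline{B}$ and the harmonic measures have total mass one.
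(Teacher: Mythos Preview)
Your argument is correct and follows essentially the same Harnack convergence scheme as the paper: apply Harnack's inequality to the differences $h_{m}-h_{n}$ on a compact set containing $x_{0}$ to obtain local uniform convergence, then pass to the limit in the identity $H_{B}h_{n}=h_{n}$. The only cosmetic differences are that the paper does not bother with the preliminary reduction to $h_{n}\ge 0$ (it applies Harnack directly to the nonnegative differences $h_{m}-h_{n}$) and it justifies $\lim_{n}H_{B}h_{n}=H_{B}h$ via monotone convergence rather than the uniform-convergence argument you use; both are valid.
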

	\begin{proof}
		Let  $\Gamma$ be a compact subset of $D$ containing $x_0$ (otherwise, take $\Gamma'=\Gamma\cup \{x_{0}\}$) and let $\varepsilon>0$. It follows from the Harnack's inequality~\eqref{harnack} and the fact that $h_{n}(x_{0})$ converges to $h(x_{0})$ that, there exist  $n_{0}\geq 1$ and a real constant $C>0$ such that for every $x\in\Gamma$ and for every $m\geq n\geq n_0$,   we have
		$$
		0\leq h_{m}(x)-h_{n}(x)  \leq  C\big(h_{m}(x_{0})-h_{n}(x_{0})\big) \leq  C\big(h(x_{0})-h_{n}(x_{0})\big) \leq  C\varepsilon.
		$$
		Letting $m$ tend to $\infty$, we get that for every $x\in\Gamma$ and for every $n\geq n_0$,
		$$
		0\leq h(x)-h_{n}(x)  \leq  C\varepsilon.
		$$
		We then conclude that the sequence $(h_{n})_{n\geq 1}$ converges to $h$ locally uniformly in $D$. In particular,  $h$ is finite and continuous in $D$.  Now, for every  $B\Subset D$ and every $x\in B$,
		$$
		H_{B}h_{n}(x)=\int_{\partial B}h_{n}(y)H_{B}(x,dy)\underset{n\to\infty}{\longrightarrow} \int_{\partial B}h(y)H_{B}(x,dy)=H_{B}h(x).
		$$  
		Since $H_B h_n=h_{n}$ for every $n\geq 1$ and $h_n(x)$ converges to $h(x)$, we deduce that $H_Bh=h$ for every ball $B\Subset D$. This yields that $h$ is $L-$harmonic in $D$.
	\end{proof}
	\begin{proposition}\label{lem01}
		Let  $D\subset \mathbb{R}^d$ be a regular set  and let   $f:\partial D\to \overline{\mathbb{R}}_{+}$ be a Borel measurable function. Then, for every connected component $E$ of $D$,  $H_{D}f$ is either $L-$harmonic  or identically infinite   in $E$. 
	\end{proposition}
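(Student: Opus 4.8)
The plan is to pass from the defining property of a regular set --- which yields $H_Df\in\mathcal{H}_L(D)$ only for $f\in\mathcal{C}(\partial D)$ --- to arbitrary nonnegative Borel data, using Theorem~\ref{bre_cv} as the mechanism that turns monotone limits of $L$-harmonic functions back into $L$-harmonic functions, and then to read off the stated dichotomy on each connected component. Throughout I use that, as recorded just after the definition of a regular set, $H_D(x,\cdot)$ is for each $x\in D$ a probability measure on $\partial D$; hence $H_Df(x):=\int_{\partial D}f\,dH_D(x,\cdot)\in[0,+\infty]$ is well defined for every Borel $f:\partial D\to\overline{\mathbb{R}}_{+}$, and $f\mapsto H_Df$ is additive and positively homogeneous.

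The first step is to treat indicator functions. Given a relatively open set $A\subset\partial D$, take the continuous functions $g_n:=\min\!\big(1,\,n\,\mathrm{dist}(\cdot,\partial D\setminus A)\big)$ on $\partial D$, so that $0\le g_n\uparrow\mathbf{1}_A$ pointwise; each $H_Dg_n$ is $L$-harmonic in $D$, the sequence $(H_Dg_n)_n$ is nondecreasing and bounded by $1$, and hence $H_D\mathbf{1}_A=\lim_n H_Dg_n\in\mathcal{H}_L(D)$ by Theorem~\ref{bre_cv}. Let $\mathcal{A}$ be the family of all Borel sets $A\subset\partial D$ with $H_D\mathbf{1}_A\in\mathcal{H}_L(D)$. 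Then $\mathcal{A}$ contains every relatively open set, contains $\partial D$ (because $H_D\mathbf{1}_{\partial D}\equiv1$), is stable under proper differences (since $H_D\mathbf{1}_{B\setminus A}=H_D\mathbf{1}_B-H_D\mathbf{1}_A$ for $A\subset B$, and a difference of $L$-harmonic functions is again $L$-harmonic), and is stable under nondecreasing countable unions (monotone convergence together with Theorem~\ref{bre_cv}, the limit again being bounded by $1$). Since the relatively open sets form a $\pi$-system generating the Borel $\sigma$-algebra of $\partial D$, Dynkin's $\pi$-$\lambda$ theorem shows $\mathcal{A}$ is that whole $\sigma$-algebra; by additivity and positive homogeneity, $H_Ds\in\mathcal{H}_L(D)$ for every nonnegative Borel simple function $s$ on $\partial D$.

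The second step passes to a general Borel $f:\partial D\to\overline{\mathbb{R}}_{+}$. Choose nonnegative simple functions $s_n\uparrow f$; by monotone convergence $H_Ds_n(x)\uparrow H_Df(x)$ for every $x\in D$, while each $H_Ds_n$ is $L$-harmonic in $D$ and therefore in every connected component $E$ of $D$, which is open (as $D$ is open) and connected, hence a domain. Fix such a component $E$. If $H_Df(x_0)<\infty$ for some $x_0\in E$, then $H_Ds_n(x_0)\to H_Df(x_0)$ is finite, so Theorem~\ref{bre_cv} applied in $E$ to the nondecreasing sequence $(H_Ds_n)_n$ shows that $H_Df$ is $L$-harmonic in $E$; if instead $H_Df(x_0)=+\infty$ for every $x_0\in E$, then $H_Df\equiv+\infty$ on $E$. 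These two alternatives are exhaustive and mutually exclusive, which is exactly the assertion.

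The only point needing care is the bookkeeping for the repeated appeals to Theorem~\ref{bre_cv}: at each application one must first know that the monotone limit is finite at one point of the relevant domain. This is automatic at the indicator and simple-function stages thanks to a uniform bound by a constant, and at the final stage it is precisely the first alternative of the dichotomy; apart from this, nothing deep is involved, the rest being routine measure theory, with the topology of $\partial D$ used only to build the continuous approximants $g_n$ of $\mathbf{1}_A$.
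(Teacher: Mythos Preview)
Your proof is correct and follows essentially the same route as the paper's: approximate indicators of relatively open subsets of $\partial D$ by the continuous functions $\min(1,n\,\mathrm{dist}(\cdot,\partial D\setminus A))$, invoke Theorem~\ref{bre_cv}, then use a class argument to reach all Borel indicators, pass to simple functions by linearity, and finally apply Theorem~\ref{bre_cv} once more to a monotone simple approximation of $f$. The only cosmetic differences are that you use Dynkin's $\pi$--$\lambda$ theorem where the paper invokes the monotone class lemma, and you carry the connected-component bookkeeping at the end rather than reducing to connected $D$ at the outset.
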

	\begin{proof} Without loss of generality, we assume that $D$ is connected. 
		In virtue of the previous theorem, we see that 
		$
		\mathcal{A}:=\{\Gamma\subset \partial D: H_{D}(\cdot, \Gamma)\in \mathcal{H}_{L}(D)\}
		$ 
		is a monotone class on $\partial D$. 
		Furthermore, $\mathcal{A}$ contains the set $\mathcal{O}$ of all open subsets of $\partial D$. In fact, let $O\in\mathcal{O}$ and consider 
		$$
		f_{n}:x\mapsto f_{n}(x)=\inf(1, n\,d(x, O^{c}))
		$$ 
		where $d(x,O^{c})=\inf_{y\in O^{c}}|x-y|$ denotes the distance between $x$ and  $O^{c}:=\mathbb{R}^d\as O$. Then $(f_{n})_{n\geq 1}$ is a nondecreasing sequence in $\mathcal{C}^{+}(\partial D)$ which converges to $\textbf{1}_{O}$ and hence $(H_Df_{n})_{n\geq 1}$ is a nondecreasing sequence of  $L-$harmonic functions in $D$ which converges to $H_D(\cdot,O)$ in view of the monotone convergence theorem.  We then conclude by Theorem~\ref{bre_cv},  that $H_{D}(\cdot, O)$ is $L-$harmonic and  thereby $\mathcal{O}\subset \mathcal{A}$. Therefore, it follows from the monotone class lemma that $\mathcal{A}$ contains $\mathcal{B}(\partial D)$. This yields that $H_{D}f\in \mathcal{H}_{L}(D)$ whenever 
		\begin{equation}\label{ef}
			f=\sum\limits_{i=1}^{k}\gamma_{i}\textbf{1}_{A_{i}}
		\end{equation} 
		with $k\geq 1$, $\gamma_{1},...,\gamma_{k}\in\mathbb{R}_+$ and $A_{1},...,A_{k}\in\mathcal{B}(\partial D)$. Finally, let $f:\partial D \to \overline{\mathbb{R}}_{+}$ be a Borel measurable function such that $H_{D}f\not\equiv \infty$ and choose a nondecreasing sequence $(f_{n})_{n\geq 1}$ of  nonnegative   functions of the form~\eqref{ef} which converges pointwise to  $f$. Then   $(H_{D}f_{n})_{n\geq 1}$ is a nondecreasing sequence of $L-$harmonic functions in $D$ pointwise converging to~$H_Df$  by the monotone convergence theorem. Thus,  $H_{D}f\in \mathcal{H}_{L}(D)$ in virtue of Theorem~\ref{bre_cv}.
	\end{proof}
	
	Let us point out that the previous result can be  extended to any Borel measurable function $f:\partial D\rightarrow \overline{\mathbb{R}}$ without having to assume that $f$ is   nonnegative in $\partial D$. More precisely, if  $D$ is connected and $f$ is $H_D(x_0,\cdot)-$integrable on $\partial D$ for some $x_0\in D$ then $f$ is $H_D(x,\cdot)-$integrable on $\partial D$ for all $x\in D$ and the function $H_Df$ is $L-$harmonic in~$D$.

	\medskip
	We recall that, due to the hypoellipticity of $L$  and \cite[Theorem 3.5]{gilbarg1998elliptic}, a non constant function $h\in~\mathcal{H}_L(D)$ can not achieve its  minimum (or maximum) in the interior of the domain $D$. This yields that every $L-$harmonic function $h$ in an open set $D$ is necessarily nonnegative provided $\liminf_{x\in D, x\to z}h(x)~\geq~0$ for every $z\in\partial D$.

	\medskip
	A lower semicontinuous function $v>-\infty$ is said to be $L-$superharmonic in $D$ if for every regular  set $U\Subset D$, $H_{U}v\in \mathcal{H}_{L}(U)$ and $ H_{U}v\leq v$ in~$U$.
	The set of all $L-$superharmonic functions in $D$ will be denoted by $\mathcal{S}_L(D)$. We will say that $v$ satisfies the property $(\mathcal M_*)$ if for every $x\in D$, there exists $r=r(x)> 0$ such that $B(x, r)\subset D$ and for all $0<\rho<r$, 
	\begin{equation}\label{promoy}
		\int_{\partial B(x,\rho)}v(z)H_{B(x,\rho)}(x,dz)\leq v(x).
	\end{equation} 
	It is obvious that every  $L-$superharmonic function satisfies property $(\mathcal M_*)$. The converse will be shown later.
	
	\begin{proposition}\label{prinmin}
		Let $D\subset\mathbb{R}^d$ be an open set and let $v:D\to]-\infty,\infty]$ be a lower semicontinuous function which satisfies the  property $(\mathcal M_*)$. The following holds true:
		\begin{enumerate}
			\item[a)] If $v$ attains its minimum in  a connected component $E$ of $D$, then $v$ is  constant in $E$.
			\item[b)] If $\liminf_{x\in D,\; x\to z} v(x)\geq 0$ for every $z\in \partial D$, then $v\geq 0$ in $D$.
		\end{enumerate}
	\end{proposition}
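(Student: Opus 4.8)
The plan is to prove part (a) by the classical connectedness argument underlying the strong minimum principle, and then to deduce part (b) from (a) via a minimizing sequence together with the elementary fact that a boundary point of a connected component of $D$ belongs to $\partial D$.

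\emph{Part (a).} We may assume the value $m:=\inf_E v$ is finite (otherwise $v\equiv+\infty$ on $E$, which is constant) and that it is attained at some $x_0\in E$. Put $A=\{x\in E:v(x)=m\}$; since $E$ is connected it suffices to show that $A$ is nonempty, closed in $E$, and open in $E$. Nonemptiness holds by assumption. Closedness follows from lower semicontinuity of $v$, which makes $\{x\in E:v(x)\le m\}$ closed in $E$, this set being exactly $A$ because $v\ge m$ on $E$. For openness, fix $x\in A$ and let $r=r(x)>0$ be given by property $(\mathcal M_*)$, so that $B(x,r)\subset D$ (hence $B(x,r)\subset E$ by connectedness) and \eqref{promoy} holds for all $0<\rho<r$. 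For such a $\rho$, $H_{B(x,\rho)}(x,\cdot)$ is a probability measure on $\partial B(x,\rho)$ and $v-m\ge 0$ on $\partial B(x,\rho)$, so \eqref{promoy} forces $\int_{\partial B(x,\rho)}(v-m)\,dH_{B(x,\rho)}(x,\cdot)=0$. We upgrade this to $v\equiv m$ on $\partial B(x,\rho)$: if $v(z_0)>m$ for some $z_0\in\partial B(x,\rho)$, lower semicontinuity yields $\delta>0$ and a relatively open $V\subset\partial B(x,\rho)$ with $z_0\in V$ and $v>m+\delta$ on $V$, whence $0=\int_{\partial B(x,\rho)}(v-m)\,dH_{B(x,\rho)}(x,\cdot)\ge\delta\,H_{B(x,\rho)}(x,V)$, which is impossible once we know $H_{B(x,\rho)}(x,V)>0$. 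To see that, note that $H_{B(x,\rho)}(\cdot,V)$ is bounded by $1$, hence $L$-harmonic in $B(x,\rho)$ by Proposition~\ref{lem01}; it is nonnegative; and it is not identically $0$, since dominating a nonzero continuous $f$ with $0\le f\le\mathbf{1}_V$ (which exists because $V$ is nonempty and relatively open) gives $H_{B(x,\rho)}(\cdot,V)\ge H_{B(x,\rho)}f$, and $H_{B(x,\rho)}f$ cannot vanish identically in $B(x,\rho)$, being continuous up to $\partial B(x,\rho)$ with boundary value $f\not\equiv 0$. By the strong minimum principle recalled above (or directly by Harnack's inequality \eqref{harnack}), a nonnegative $L$-harmonic function on the connected open set $B(x,\rho)$ that vanishes at one interior point vanishes identically; hence $H_{B(x,\rho)}(\cdot,V)>0$ throughout $B(x,\rho)$, in particular $H_{B(x,\rho)}(x,V)>0$. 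Therefore $v\equiv m$ on $\bigcup_{0<\rho<r}\partial B(x,\rho)=B(x,r)\setminus\{x\}$, and since $v(x)=m$ we conclude $B(x,r)\subset A$. Thus $A$ is open in $E$, and $A=E$.

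\emph{Part (b).} Suppose $m:=\inf_D v<0$ and choose $x_n\in D$ with $v(x_n)\to m$. Assume first that $D$ is bounded; after passing to a subsequence, $x_n\to x^*\in\overline D$. If $x^*\in\partial D$, then $\liminf_{x\in D,\,x\to x^*}v(x)\le\lim_n v(x_n)=m<0$, contradicting the hypothesis. Hence $x^*\in D$, and lower semicontinuity gives $v(x^*)\le\liminf_n v(x_n)=m\le v(x^*)$, so $v$ attains its minimum $m$ at $x^*\in D$. By part (a), $v\equiv m$ on the connected component $E$ of $D$ containing $x^*$. Since $E$ is open, nonempty and bounded, $\partial E\ne\emptyset$; moreover $\partial E\subset\partial D$, because a point of $\partial E$ lying in $D$ would lie in the connected component $E$, an open neighborhood of it, and hence could not be a boundary point of $E$. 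Choosing $z\in\partial E\subset\partial D$ and letting $x\to z$ within $E$ gives $\liminf_{x\in D,\,x\to z}v(x)\le m<0$, a contradiction. Hence $m\ge 0$, i.e. $v\ge 0$ in $D$. (If $D$ is unbounded, one reads $\partial D$ as the boundary of $D$ in the one-point compactification $\mathbb{R}^d\cup\{\infty\}$ and adds the natural hypothesis $\liminf_{|x|\to\infty}v(x)\ge 0$; the proof is then verbatim, using compactness of $\mathbb{R}^d\cup\{\infty\}$.)

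The only non-routine ingredient is the strict positivity of the harmonic measure $H_{B(x,\rho)}(x,\cdot)$ on every nonempty relatively open subset of $\partial B(x,\rho)$, which is precisely what lets us pass from ``almost everywhere'' to ``everywhere'' in part (a); I would isolate it as a one-line lemma (a consequence of Proposition~\ref{lem01} together with the strong minimum principle, or Harnack's inequality) just before stating Proposition~\ref{prinmin}. Everything else is the standard template for the minimum principle and should be straightforward.
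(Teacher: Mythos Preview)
Your proof is correct, but the argument for part~(a) differs from the paper's in the key step of showing that $v\equiv m$ on each sphere $\partial B(x,\rho)$. You argue measure-theoretically: the integral of the nonnegative function $v-m$ against the harmonic measure $H_{B(x,\rho)}(x,\cdot)$ vanishes, and since that measure charges every nonempty relatively open subset of the sphere (which you establish via Proposition~\ref{lem01} and Harnack's inequality), the integrand must vanish identically. The paper instead observes that $H_B v$ is $L$-harmonic, bounded below by $m$, and equal to $m$ at the center, so by the strong minimum principle $H_B v\equiv m$ in $B$; it then approximates $v$ from below on $\partial B$ by continuous functions $f_n$ and uses regularity of the ball to pass to the boundary, obtaining $v\leq m$ on $\partial B$. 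Your route trades the approximation step for a short lemma on the support of harmonic measure; the paper's route avoids that lemma but needs the (standard) fact that a lower semicontinuous function bounded below is the increasing limit of continuous functions. Both are clean, and your suggestion to isolate the full-support property as a one-line lemma is sensible.

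For part~(b) the two arguments are close in spirit: the paper extends $v$ lower semicontinuously to $\overline{D}$ and locates the minimum on the compact closure, while you run a minimizing sequence and then exploit $\partial E\subset\partial D$ to reach a boundary point where the hypothesis is violated. Both implicitly assume $\overline{D}$ compact (the paper's proof writes ``the compact set $\overline{D}$'' without having assumed $D$ bounded); you flag this explicitly and indicate the one-point-compactification fix, which is a small improvement in clarity.
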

	\begin{proof} Without loss of generality, we consider the case where $D$ is connected and $v$ is not identically infinite in $D$. Assume that there exists $x_0\in D$ such that  $\min_{x\in D} v(x)= v(x_0)$. By~$(\mathcal M_*)$, there exists $r_{0}>0$ such that  $B(x_0,r_{0})\subset D$ and for every ball $B=B(x_0,\rho)$ with $0 <\rho < r_{0}$,
		$$
		H_Bv(x_0)=\int_{\partial B}v(z)H_B(x_0,dz)\leq v(x_0).
		$$
		Furthermore, $H_B v\geq v(x_0)$ in $B$ which yields that the $L-$harmonic function $H_B v$ reaches its minimum at the point $x_0$ and thereby $H_B v=  v(x_0)$ in $B$. 	
		On the other hand, since $v$ is lower semicontinuous and bounded below by  $v(x_0)$, there exists (see for instance \cite[Lemma 3.2.1]{armitage2001classical})  a nondecreasing sequence $(f_n)_{n\geq 1}$ of  continuous  functions in $\partial B$ such that $\lim_{n\to\infty}f_n(z)=v(z)$ for all $z\in\partial B$. Then $ H_B f_n\leq H_B v= v(x_0)$ in $B$ for every $n\geq 1$. Hence, for a given point  $z\in\partial B$, 
		\begin{equation}\label{eqlim}
			f_n(z)=\lim_{x\in B, \, x\rightarrow z} H_B f_n(x)\leq  v(x_0)\quad\mbox{for all }n\geq 1.
		\end{equation}
		The equality in the formula above follows from the fact that $f_n$ is continuous in $\partial B$ for all $n\geq 1$ and the ball $B$ is regular. 
		Letting $n$ tend to $\infty$, we obtain that $v(z)\leq  v(x_0)$ for every $z\in\partial B$ and so $v= v(x_0)$ in $\partial B(x_0,\rho)$. Since $0<\rho<r_0$ is arbitrary, we conclude that $v(x)= v(x_0)$ for all $x\in B(x_0,r_0)$.  Therefore, the set
		$$
		U=\{x\in D: v(x)= v(x_0)\}
		$$
		is open. By the lower semicontinuity  of~$v$, we also see that $U$ is  closed relative to $D$. Consequently $U= D$ and the proof of assertion $(a)$ is complete. In order to show $(b)$, let us assume that $\liminf_{x\in D, x\to z}v(x)\geq 0$ for every $z\in \partial D$ but there exists $x_0\in D$ such that $v(x_0)<0$. We extend $v$ to $\overline{D}$ by defining  
		$$
		v(z)=\liminf_{x\in D, x\to z} v(x)\mbox{ for all }  z\in \partial D.
		$$ It is clear that $v$ is lower semicontinuous in the compact set $\overline{D}$. Then, there exists $y_0\in \overline{D}$ such that   $\min_{x\in \overline{D}}v(x)=v(y_0)$. Since $v\geq 0$ in $\partial D$ and $v(x_0)<0$, we conclude that $y_0\in D$ and consequently $v$ achieves its minimum in the interior of~$D$. Using $(a)$ we get that $v(x)=v(x_0)<0$ for every  $x\in D$ which  yields a contradiction.  
	\end{proof}
	
	Throughout this paper, $\lambda_d$ will denote the Lebesgue measure in $\mathbb{R}^d$.	
	\begin{proposition}\label{mprop}
		Let $D\subset\mathbb{R}^d$ be an open set and let $v:D\to]-\infty,\infty]$ be a lower semicontinuous function which is finite $\lambda_d-$a.e in $D$. The following assertions are equivalent:
		\begin{enumerate}
			\item[a)] $v$ is $L-$superharmonic in $D$.
			\item[b)] $H_Bv\leq v$ for every ball $B\Subset D$.
			\item[c)] $v$ satisfies property $(\mathcal{M}_*)$ in $D$.
		\end{enumerate}
	\end{proposition}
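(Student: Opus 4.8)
The plan is to establish the cycle of implications $(a)\Rightarrow(b)\Rightarrow(c)\Rightarrow(a)$, with essentially all of the work concentrated in the last step. The implication $(a)\Rightarrow(b)$ is immediate: every ball $B\Subset D$ is a regular set, so applying the definition of $L$-superharmonicity with $U=B$ gives $H_Bv\le v$ in $B$. For $(b)\Rightarrow(c)$, I would fix $x\in D$, choose $r>0$ with $\overline{B(x,r)}\subset D$, and note that for every $0<\rho<r$ the ball $B(x,\rho)$ is relatively compact in $D$; evaluating the inequality $H_{B(x,\rho)}v\le v$ at the centre $x$ then gives exactly the mean-value inequality required by $(\mathcal M_*)$.

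The heart of the matter is $(c)\Rightarrow(a)$. So suppose $v$ satisfies $(\mathcal M_*)$ and is finite $\lambda_d$-a.e., fix an arbitrary regular set $U\Subset D$, and aim to prove that $H_Uv\in\mathcal H_L(U)$ and $H_Uv\le v$ in $U$. First I would observe that $\partial U$ is a compact subset of $D$ on which $v$ is lower semicontinuous, hence bounded below, so (exactly as in the proof of Proposition~\ref{prinmin}) there is a nondecreasing sequence $(f_n)_{n\ge1}$ in $\mathcal C(\partial U)$ with $f_n\to v$ pointwise on $\partial U$. Each $H_Uf_n$ is $L$-harmonic in $U$, the sequence $(H_Uf_n)_{n\ge1}$ is nondecreasing, and by the monotone convergence theorem $\lim_n H_Uf_n=H_Uv$ pointwise in $U$, where $H_Uv(x):=\int_{\partial U}v(y)\,H_U(x,dy)\in\,]-\infty,\infty]$ is well defined because $v$ is bounded below on $\partial U$.

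Next I would transfer the mean-value inequality to the approximants via the minimum principle of Proposition~\ref{prinmin}. The point is that $w_n:=v-H_Uf_n$ is lower semicontinuous on $U$ and satisfies $(\mathcal M_*)$ there: since $H_Uf_n$ is $L$-harmonic in $U$ one has $H_B(H_Uf_n)=H_Uf_n$ for all balls $B\Subset U$, so subtracting this identity from the mean-value inequality for $v$ (after shrinking the radius in $(\mathcal M_*)$ so that the relevant ball stays inside $U$, i.e.\ below $d(x,U^c)$) yields the mean-value inequality for $w_n$. Moreover, for $z\in\partial U$ the lower semicontinuity of $v$ and the regularity of $U$ give $\liminf_{x\in U,\,x\to z}w_n(x)\ge v(z)-f_n(z)\ge0$, because $f_n\le v$ on $\partial U$. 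Assertion $(b)$ of Proposition~\ref{prinmin} then forces $w_n\ge0$, that is $H_Uf_n\le v$ in $U$ for every $n$, and letting $n\to\infty$ gives $H_Uv\le v$ in $U$. Finally, since $v$ is finite $\lambda_d$-a.e., each connected component of $U$ contains a point $x_0$ with $v(x_0)<\infty$, so $\lim_n H_Uf_n(x_0)\le v(x_0)<\infty$; applying Theorem~\ref{bre_cv} on every component shows that $H_Uv=\lim_n H_Uf_n$ is $L$-harmonic in $U$. Since $U$ was an arbitrary regular set relatively compact in $D$, this proves that $v$ is $L$-superharmonic.

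I expect $(c)\Rightarrow(a)$ to be the only genuine obstacle: one has to make sense of $H_Uv$ for a possibly wildly oscillating lower semicontinuous $v$ and, above all, rule out the degenerate possibility that $H_Uv\equiv\infty$ on some connected component of $U$. This is precisely where the two hypotheses act together — the mean-value inequality $(\mathcal M_*)$, transported to arbitrary balls through Proposition~\ref{prinmin}, forces $H_Uf_n\le v$, and the $\lambda_d$-a.e.\ finiteness of $v$ then keeps the increasing limit finite so that Theorem~\ref{bre_cv} can be invoked. Lesser technical points that still need attention are that $U$ need not be connected, so Theorem~\ref{bre_cv} must be applied component by component, and that the radius appearing in $(\mathcal M_*)$ must be reduced to keep the balls used inside $U$ rather than merely inside $D$.
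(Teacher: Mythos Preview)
Your proof is correct and follows essentially the same route as the paper: the same cycle $(a)\Rightarrow(b)\Rightarrow(c)\Rightarrow(a)$, with $(c)\Rightarrow(a)$ handled by approximating $v$ from below on $\partial U$ by continuous $f_n$, applying Proposition~\ref{prinmin} to $w_n=v-H_Uf_n$, and passing to the limit. The only cosmetic difference is that the paper invokes Proposition~\ref{lem01} (combined with the a.e.\ finiteness of $v$) to conclude $H_Uv$ is $L$-harmonic, whereas you appeal directly to Theorem~\ref{bre_cv} component by component; since Proposition~\ref{lem01} is itself proved via Theorem~\ref{bre_cv}, the two arguments are equivalent.
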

	\begin{proof}
		We obviously see that $(a)$ yields $(b)$ which in turn implies $(c)$. To show that $(c)$ yields~$(a)$, we take a regular set $U\Subset D$ and choose a nondecreasing sequence $(f_n)_{n\geq 1}$ of continuous functions on $\partial U$ such that $\lim_{n\to\infty } f_n(z)=v(z)$ for all $z\in\partial U$. Then, for every  $n\geq 1$,  $w_n:=v-H_U f_n$
		is lower semicontinuous, $w_n>-\infty$ in $U$ and for all $z\in \partial U$,
		$$
		\liminf_{x\in U, x\to z} w_n(x)\geq v(z)-f_n(z)\geq 0.
		$$ 
		It is also immediate that $w_n$ satisfies~$(\mathcal M_*)$ in $U$. Thereby, according to assertion~$(b)$ in  the previous proposition, we conclude that $w_n\geq 0$ in $U$ for all $n\geq 1$ and consequently 
		\begin{equation}\label{fn}
			H_Uv=\lim_{n\to \infty} H_U f_n \leq v\quad \mbox{in }U.
		\end{equation}
		Furthermore, the $L-$harmonicity of $H_Uv$ follows from Proposition~\ref{lem01} and the fact that $v$ is finite  $\lambda_d-$a.e. Thus, $v$ is $L-$superharmonic in $D$ if $(\mathcal M_*)$ is satisfied.
	\end{proof}
	
	We end this section with the following convergence property of a monotone sequences of $L-$superharmonic functions which follows from Theorem~\ref{bre_cv}.
	\begin{proposition}\label{sconv}
		Let $D\subset \mathbb{R}^d$ be an open set and let $(v_{n})_{n\geq 1}$ be a  nondecreasing   sequence of nonnegative  $L-$superharmonic functions in $D$ such that $v:=\sup_{n\geq 1}v_{n}<\infty$  $\lambda_d-$a.e. Then $v$ is $L-$superharmonic in $D$.
	\end{proposition}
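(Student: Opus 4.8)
The plan is to reduce everything to Proposition~\ref{mprop}, so that the only thing one actually verifies is the mean-value inequality on balls. First I would observe that $v$ inherits the structural hypotheses of that proposition: being the pointwise supremum (equivalently, the increasing limit) of the lower semicontinuous functions $v_n$, the function $v$ is lower semicontinuous in $D$, since $\{v>a\}=\bigcup_{n\geq 1}\{v_n>a\}$ is open for every $a\in\mathbb{R}$; since each $v_n$ is nonnegative, $v\geq 0$, so in particular $v>-\infty$ everywhere; and $v<\infty$ $\lambda_d$-a.e.\ by assumption. Thus $v:D\to]-\infty,\infty]$ is exactly the type of function to which Proposition~\ref{mprop} applies.

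Next I would verify assertion $(b)$ of Proposition~\ref{mprop}, namely that $H_Bv\leq v$ for every ball $B\Subset D$. Fix such a ball $B$ and a point $x\in B$. For each $n\geq 1$, the $L$-superharmonicity of $v_n$ gives $H_Bv_n\leq v_n$ in $B$, hence $H_Bv_n(x)\leq v_n(x)\leq v(x)$. Because $(v_n)_{n\geq 1}$ is nondecreasing, nonnegative, and converges pointwise to $v$ on $\partial B$, the monotone convergence theorem applied to the measure $H_B(x,\cdot)$ yields
\begin{equation*}
H_Bv(x)=\int_{\partial B}v(y)\,H_B(x,dy)=\lim_{n\to\infty}\int_{\partial B}v_n(y)\,H_B(x,dy)=\lim_{n\to\infty}H_Bv_n(x)\leq v(x).
\end{equation*}
Since $x\in B$ and $B\Subset D$ are arbitrary, condition $(b)$ holds, and Proposition~\ref{mprop} then gives that $v$ is $L$-superharmonic in $D$.

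I do not anticipate a genuine obstacle: this is the expected ``increasing limits preserve superharmonicity'' statement, and all the substantive work has already been done in Theorem~\ref{bre_cv} and Propositions~\ref{lem01}--\ref{mprop}. The only point that deserves a moment's attention is hidden inside the invocation of Proposition~\ref{mprop}: for $v$ to qualify as $L$-superharmonic one needs $H_Bv$ to be genuinely $L$-harmonic (not identically $+\infty$). But this is immediate from Proposition~\ref{lem01}: on the connected set $B$, $H_Bv$ is either $L$-harmonic or identically infinite, and the latter is impossible since we have just shown $H_Bv\leq v$ with $v<\infty$ $\lambda_d$-a.e.\ while $B$ has positive Lebesgue measure.
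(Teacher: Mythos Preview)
Your proof is correct and is essentially the argument the paper has in mind; the paper gives no detailed proof, only the remark that the proposition ``follows from Theorem~\ref{bre_cv}'', and your route through Proposition~\ref{mprop}(b) together with monotone convergence is precisely an unpacking of that remark (Theorem~\ref{bre_cv} being what guarantees, inside the proof of Proposition~\ref{mprop}, that $H_Bv$ is genuinely $L$-harmonic). Your closing paragraph on the harmonicity of $H_Bv$ is accurate but redundant, since that step is already absorbed into the implication $(b)\Rightarrow(a)$ of Proposition~\ref{mprop}.
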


	\section{Green operator}
	We call Green function of $L$ in the regular set $D\subset\mathbb{R}^d$, the Borel measurable function 
	$G_{D}:D\times D\to ]0,\infty]$ having the following properties   for any  $y\in D$:
	\begin{enumerate}
		\item[G1)] $G_D(\cdot,y)$ is finite and continuous  in  $D\as\{y\}$ and  $\lim_{x\in D,x\to y}G_D(x,y)=G_D(y,y)$.
		\item[G2)]  $G_D(\cdot,y)$ is  	locally $\lambda_d-$integrable in $D$ and $LG_{D}(\cdot,y)=-\delta_{y}$.
		\item[G3)]  $\lim_{x\in D, x\rightarrow z}G_D(x,y)=0$ for all $z\in\partial D$.
	\end{enumerate}
	The fact that  coefficients $a_{ij},b_i$ are sufficiently smooth guarantees the existence of  the Green function  $G_D$ (see for instance \cite{miranda1970partial}).
	We also notice that a simple application of the maximum principle  in the domain $D\as\{y\}$ yields that  
	\begin{equation}\label{ypole}
		G_D(y,y)=\sup_{x\in D}G_D(x,y).
	\end{equation}

	\begin{proposition} Let $D\subset \mathbb{R}^d$ be a regular set, $y\in D$ and    $0<\beta< G_D(y,y)$. Then,  the  function $w:=G_D(\cdot,y)\wedge \beta$
		is $L-$superharmonic in $D$. In particular, $G_D(\cdot,y)$ is 	 $L-$superharmonic in $D$.		
	\end{proposition}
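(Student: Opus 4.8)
The plan is to check that $w$ fulfils property $(\mathcal{M}_*)$ and then appeal to Proposition~\ref{mprop}. First I would record that $w$ is continuous, bounded and strictly positive in $D$: by (G1) the map $G_D(\cdot,y)$ is continuous on $D\as\{y\}$ with $\lim_{x\to y}G_D(x,y)=G_D(y,y)$, so $G_D(\cdot,y)\colon D\to\,]0,\infty]$ is continuous, and therefore its pointwise minimum $w$ with the finite constant $\beta$ is continuous and satisfies $0<w\le\beta$. In particular $w$ is lower semicontinuous and finite everywhere, so the hypotheses of Proposition~\ref{mprop} are met and it suffices to verify $(\mathcal{M}_*)$.

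To do so, fix $x\in D$ and split into two cases. If $G_D(x,y)>\beta$ --- which includes the case $x=y$, since $\beta<G_D(y,y)$ --- then, by continuity of $G_D(\cdot,y)$, there is $r>0$ with $B(x,r)\subset D$ and $G_D(\cdot,y)>\beta$ throughout $B(x,r)$; hence $w\equiv\beta$ on $B(x,r)$ and $\int_{\partial B(x,\rho)}w(z)\,H_{B(x,\rho)}(x,dz)=\beta=w(x)$ for every $0<\rho<r$. If $G_D(x,y)\le\beta$, then necessarily $x\ne y$ and one may pick $r>0$ with $B(x,r)\subset D\as\{y\}$; since $LG_D(\cdot,y)=0$ in $D\as\{y\}$, the function $G_D(\cdot,y)$ is $L$-harmonic there, whence $H_{B(x,\rho)}G_D(\cdot,y)(x)=G_D(x,y)$ for $0<\rho<r$, and combining this with $w\le G_D(\cdot,y)$ and the positivity of the measure $H_{B(x,\rho)}(x,\cdot)$ gives
\[
\int_{\partial B(x,\rho)}w(z)\,H_{B(x,\rho)}(x,dz)\le G_D(x,y)=w(x).
\]
In both cases $(\mathcal{M}_*)$ holds at $x$, so Proposition~\ref{mprop} yields that $w$ is $L$-superharmonic in $D$.

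For the last assertion I would choose a sequence $0<\beta_n\uparrow G_D(y,y)$ and set $w_n:=G_D(\cdot,y)\wedge\beta_n$. By the previous step each $w_n$ is $L$-superharmonic, the sequence $(w_n)_{n\ge1}$ is nondecreasing and nonnegative, and $\sup_{n\ge1}w_n=G_D(\cdot,y)$ pointwise in $D$ in view of \eqref{ypole}; since $G_D(\cdot,y)$ is finite on $D\as\{y\}$ it is finite $\lambda_d$-a.e., so Proposition~\ref{sconv} shows that $G_D(\cdot,y)$ is $L$-superharmonic in $D$. The only slightly delicate point is the borderline situation $G_D(x,y)=\beta$, where $w$ need not be locally constant near $x$ and the first argument breaks down; it is absorbed into the second case above by dominating $w$ with the $L$-harmonic function $G_D(\cdot,y)$ and invoking the monotonicity of $H_{B(x,\rho)}$. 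Apart from this, the argument is routine once Propositions~\ref{mprop} and~\ref{sconv} are available.
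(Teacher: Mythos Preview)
Your proof is correct and follows essentially the same route as the paper: verify property $(\mathcal{M}_*)$ for $w$ and invoke Proposition~\ref{mprop}, then pass to the limit via Proposition~\ref{sconv}. The only cosmetic difference is the placement of the borderline case: the paper puts $G_D(x,y)=\beta$ in the ``$\ge\beta$'' branch and simply uses $w\le\beta=w(x)$ against the probability measure $H_{B(x,\rho)}(x,\cdot)$, whereas you put it in the ``$\le\beta$'' branch and dominate $w$ by the $L$-harmonic function $G_D(\cdot,y)$; both arguments are sound.
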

	\begin{proof} In virtue of Proposition~\ref{mprop}, it is sufficient to show that $w$ satisfies~$(\mathcal M_*)$. 	 Since  $LG_{D}(\cdot,y)=-\delta_{y}$, we  see that $w$ is $L-$harmonic in $\{G_D(\cdot,y)<\beta\}$ and consequently~\eqref{promoy} is valid for all $x\in D$ such that $G_D(x,y)<\beta$. If  $G_D(x,y)\geq \beta$,  inequality~\eqref{promoy} is immediate with $r(x)=d(x,D^c)$. Thus, $w=G_D(\cdot,y)\wedge\beta$ is $L-$superharmonic in $D$ for every   $0<\beta< G_D(y,y)$. To prove the last part of the proposition, we consider  $\beta_n=G_D(y,y)\wedge n$ and $w_n=G_D(\cdot, y)\wedge\beta_n$ for every $n\geq 1$. Therefore, Proposition~\ref{sconv} and~\eqref{ypole} yield that  $G_D(\cdot,y)=\lim_{n\to\infty}w_n$ is $L-$superharmonic in~$D$.
	\end{proof}
	
	We then conclude that $G_D(\cdot,y)$ is a {\em potential with support} $\{y\}$ in the sense of R.M. Hervé~\cite{herve1962recherches}. Hence,  the uniqueness of the Green function $G_D$ follows  from  Theorem~16.5 in~\cite{herve1962recherches} and property {\rm (G2)} above. In particular, if $D$ and $\Omega$ are regular sets such that  $D\subset\Omega$, then
	\begin{equation}\label{ggg}
		G_D(\cdot,y)=	G_\Omega(\cdot,y)-H_DG_\Omega(\cdot,y)\quad \mbox{in }D
	\end{equation}
	which yields that  $G_D(x,y)\leq G_\Omega(x,y)$ for all $x,y\in D$. 
	
	Given an arbitrary open set  $\Omega\subset\mathbb{R}^d$, we define for every $x,y\in \Omega$,
	\begin{equation}\label{greenf}
		G_{\Omega}(x,y):=\sup_{D\in \mathcal{R}(\Omega)} G_D(x,y),
	\end{equation}
where  $\mathcal{R}(\Omega)$ denotes the family of all regular sets~$D\Subset\Omega$.
	\begin{definition}
		The  open set $\Omega$ of $\mathbb{R}^d$ is called Greenian if for every $y\in \Omega$, $G_\Omega(\cdot,y)$ is not identically infinite in the connected component of $\Omega$ which contains $y$. In this setting, $G_\Omega$ is called Green function of $L$ in $\Omega$.
	\end{definition}
	
	If $\Omega $ is regular then $\Omega \in \mathcal{R}(\Omega)$ and $G_{\Omega}$ is just the Green function of $L$ in $\Omega$ given  at the beginning of this section. Let us also note that  for any sequence   $(D_n)_{n\geq 1}$  of regular sets such that $D_n\Subset D_{n+1}$ for all $n\geq 1$ and $\cup_{n\geq 1}D_n=\Omega$, we have
	\begin{equation}\label{lgreen}
		G_{\Omega}(x,y)=\lim_{n\to\infty } G_{D_n}(x,y).
	\end{equation}
	Hence, it is not difficult to show that $G_\Omega(\cdot,y)$ satisfies properties {\rm (G1)} and {\rm (G2)} for any  Greenian set~$\Omega$.  However, instead of {\rm (G3)}, we observe that $G_\Omega$ satisfies 
	\begin{enumerate}
	\item[G3')] For every $h\in\mathcal{H}_L^+(\Omega)$, if $h\leq G_\Omega(\cdot,y)$ in $\Omega$ then $h\equiv 0$ in $\Omega$.
	\end{enumerate}

	We define $\Gamma(x)=\overline\Gamma(|x|)$ for every $x\in\mathbb{R}^d$, where $\overline\Gamma$ is given for every real $r> 0$ by 	
	\begin{equation}\label{gacomp}
		\overline\Gamma(r)=\left\{\begin{array}{lr}
			r^{2-d} & \mbox{if }d\geq 3,
			\\ 1\vee(-\log r) & \mbox{if }d=2,
			\\ 1 & \mbox{if } d=1.
		\end{array}\right.
	\end{equation}
	Let $D\subset\mathbb{R}^d$ be a bounded open set. It is well known (see \cite{hueber1982}) that there exists a  real constant $c>0$  such that 
	\begin{equation}\label{equa}
		G_{D}(x,y)\leq c\, \Gamma(x-y)\quad\mbox{for all }x,y\in D.
	\end{equation}
	Therefore, every bounded open subset of $\mathbb{R}^d$, $d\ge 1$, is Greenian. Moreover, if $d\geq 3$ then~$\mathbb{R}^d$ and thereby all open subsets are Greenian. However, $\mathbb{R}$ and $\mathbb{R}^2$ are not Greenian.
	
	Given a Greenian set $D\subset\mathbb{R}^d$, the Green operator in $D$ is defined by 
	$$
	G_D p:=\int_D G_D(\cdot,y)p(y)dy
	$$
	for every function $p\in\mathcal{B}(D)$ for which the above integral exists. The following proposition is well proved in \cite{dynkin2002diffusions}.

	\begin{proposition}\label{pror}
		Let $D\subset\mathbb{R}^d$ be a bounded domain and let $p\in \mathcal{B}^+(D)$.
		\begin{enumerate}
			\item[a)]  If $p$ is bounded, then $G_Dp$ is bounded and  $\alpha-$H\"{o}lder  continuous in $D$. If moreover $D$ is regular, then $\lim_{x\in D, \, x\rightarrow z} G_Dp(x)=0$ for all $z\in \partial D$.
			\item[b)]  If $p$ is bounded and  $\beta-$H\"{o}lder  continuous in $D$ for $0<\beta\leq 1$, then $G_Dp\in C^2(D)$.
		\end{enumerate}
	\end{proposition}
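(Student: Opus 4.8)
The plan is to read off the boundedness of $w:=G_Dp$ from the pointwise bound~\eqref{equa}, to identify $w$ as a distributional solution of $-Lw=p$ in $D$, and then to invoke interior elliptic regularity; the vanishing on $\partial D$ is handled by an elementary splitting argument. For the boundedness: since $p\in\mathcal{B}^+(D)$ is bounded and $D\subset\overline{B(x,R)}$ with $R:=\operatorname{diam}(D)<\infty$ for every $x\in D$, \eqref{equa} gives
$$
0\le w(x)=\int_D G_D(x,y)\,p(y)\,dy\le c\,\|p\|_\infty\int_{|z|\le R}\Gamma(z)\,dz<\infty,
$$
the last integral being finite because $\overline\Gamma$ has an integrable singularity at $0$ in every dimension $d\ge 1$; set $M:=\sup_{x\in D}\int_D G_D(x,y)\,dy<\infty$. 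For $\vphi\in\mathcal{C}_c^\infty(D)$ the map $(x,y)\mapsto G_D(x,y)p(y)L^*\vphi(x)$ is absolutely integrable on $D\times D$ (its modulus is dominated by $\|p\|_\infty\|L^*\vphi\|_\infty\,G_D(x,y)\mathbf 1_{\operatorname{supp}\vphi}(x)$, whose integral is at most $M\|p\|_\infty\|L^*\vphi\|_\infty\lambda_d(\operatorname{supp}\vphi)$), so Fubini's theorem together with~{\rm(G2)} (which reads $\int_D G_D(x,y)L^*\vphi(x)\,dx=-\vphi(y)$) yields $\int_D w\,L^*\vphi\,dx=-\int_D p\,\vphi\,dx$; that is, $Lw=-p$ in $D$ in the distributional sense.

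For the regularity: in case (a), $p$ is bounded, so by interior $L^q$-estimates for the uniformly elliptic operator $L$ with smooth coefficients (see~\cite{gilbarg1998elliptic}) one has $w\in W^{2,q}(B)$ for every ball $B\Subset D$ and every $q\in]1,\infty[$; choosing $q>d$ and using the embedding $W^{2,q}(B)\hookrightarrow\mathcal{C}^{1,1-d/q}(\overline B)$, we get that $w$ is of class $\mathcal{C}^1$ in $D$, hence locally Lipschitz, hence $w\in\mathcal{C}^\alpha(D)$ for the fixed exponent $\alpha\in]0,1]$. In case (b), $p\in\mathcal{C}^\beta(D)$ and the coefficients $a_{ij},b_i$ are locally $\gamma$-H\"older with $\gamma:=\min(\alpha,\beta)$; since we already know $w\in W^{2,q}(B)$ for all balls $B\Subset D$, interior Schauder estimates~\cite{gilbarg1998elliptic} upgrade $Lw=-p$ to $w\in\mathcal{C}^{2}(D)$ (indeed locally $\mathcal{C}^{2,\gamma}$).

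For the boundary behaviour when $D$ is regular: fix $z\in\partial D$ and $\ve>0$, and choose $\delta>0$ with $c\,\|p\|_\infty\int_{|z'|\le 2\delta}\Gamma(z')\,dz'<\ve$. For $x\in D$ with $|x-z|<\delta/2$, split $w(x)=I_1(x)+I_2(x)$, the integrals over $D\cap B(z,\delta)$ and over $D\as B(z,\delta)$. Since $B(z,\delta)\subset B(x,2\delta)$, \eqref{equa} gives $I_1(x)\le c\,\|p\|_\infty\int_{|z'|\le 2\delta}\Gamma(z')\,dz'<\ve$. For $I_2$, every $y\in D\as B(z,\delta)$ satisfies $|x-y|\ge\delta/2$, hence $G_D(x,y)\le c\,\overline\Gamma(\delta/2)$ because $\overline\Gamma$ is non-increasing; this provides the integrable majorant $y\mapsto c\,\overline\Gamma(\delta/2)\|p\|_\infty\mathbf 1_D(y)$, while~{\rm(G3)} gives $G_D(x,y)\to0$ as $x\to z$ for each such $y$, so dominated convergence yields $I_2(x)\to0$. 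Thus $\limsup_{x\to z}w(x)\le\ve$, and since $\ve$ is arbitrary and $w\ge0$, we get $\lim_{x\to z}w(x)=0$.

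The delicate step is the passage to $w\in W^{2,q}(B)$: one must first secure $Lw=-p$ distributionally — the Fubini exchange between the non-local operator $L$ and the integration against $p$ — and only then appeal to interior regularity; note also that $p\in L^\infty$ yields no better than $\mathcal{C}^{1,\gamma}$ with $\gamma<1$, which is why the statement is formulated with the a priori fixed exponent $\alpha$. A more self-contained alternative, close to the treatment in~\cite{dynkin2002diffusions}, is to write $G_D(x,y)=F(x,y)+r(x,y)$ with $F$ carrying the singularity and the derivative bounds of a fundamental solution of $L$ and $r(\cdot,y)$ smoother, and to apply the classical Newtonian-potential estimates to $\int_D F(\cdot,y)p(y)\,dy$.
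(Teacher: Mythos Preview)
The paper does not actually prove this proposition; the sentence introducing it reads ``The following proposition is well proved in~\cite{dynkin2002diffusions}.'' Your argument therefore supplies a proof where the paper only gives a reference, and it is correct.

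Your route is PDE-theoretic rather than kernel-based: you first identify $w=G_Dp$ as a distributional solution of $Lw=-p$ via Fubini and property~(G2), then apply interior $L^q$ (Calder\'on--Zygmund) regularity together with Sobolev embedding for~(a), and interior Schauder estimates for~(b). This is efficient and leans only on machinery already cited in the paper (\cite{gilbarg1998elliptic}); the treatment in~\cite{dynkin2002diffusions}, by contrast, works directly with the singularity structure of $G_D$ along the lines you sketch in your closing paragraph. The one step that is not quite a black-box citation from~\cite{gilbarg1998elliptic} is the passage from ``$w\in L^\infty$ and $Lw=-p$ in the sense of distributions'' to ``$w\in W^{2,q}_{\mathrm{loc}}$'', but with coefficients $a_{ij}\in\mathcal C^{2,\alpha}$, $b_i\in\mathcal C^{1,\alpha}$ this follows from standard mollification-and-uniqueness arguments, and you rightly flag it as the delicate point. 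The boundary argument (near/far splitting about~$z$, then~(G3) with dominated convergence on the far piece and the integrability of $\Gamma$ near the origin on the near piece) is clean and standard.
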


	We need the following topological lemma which should be well known. For the completeness, we give here the proof. 
	\begin{lemma}\label{toplem}
		Let $D$ be a connected open subset of $\mathbb{R}^d$  and let $a\in D$. Then for every $x\in D$, there exist $n\geq 1$ and $x_0,x_1,\ldots, x_n\in D$ such that 
		\begin{equation}\label{segments}
			x_0=a,\;x_n=x\mbox{ and } [x_{k-1},x_k]\subset D\mbox{ for all }k\in\{1,\ldots,n\}.
		\end{equation} 
	\end{lemma}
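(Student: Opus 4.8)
The plan is to run the standard ``clopen'' argument using the connectedness of $D$. Fix $a\in D$ and consider the set
\[
A=\{x\in D:\ \text{there exist }n\ge 1\text{ and }x_0,\dots,x_n\in D\text{ satisfying }\eqref{segments}\}.
\]
The goal is to show $A=D$. First note $A\neq\varnothing$, since $a\in A$ (take $n=1$ and $x_0=x_1=a$, so that $[x_0,x_1]=\{a\}\subset D$). It therefore suffices to prove that $A$ and $D\setminus A$ are both open in $\mathbb{R}^d$: then $D=A\cup(D\setminus A)$ is a partition of the connected set $D$ into two open pieces, forcing $D\setminus A=\varnothing$.

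Next I would check that $A$ is open. Let $x\in A$ and, using that $D$ is open, pick $r>0$ with $B(x,r)\subset D$. For any $y\in B(x,r)$ the segment $[x,y]$ lies in $B(x,r)$ by convexity of balls, hence $[x,y]\subset D$. Concatenating a polygonal path from $a$ to $x$ inside $D$ (which exists since $x\in A$) with the segment $[x,y]$ yields a polygonal path from $a$ to $y$ inside $D$, so $y\in A$. Thus $B(x,r)\subset A$, and $A$ is open.

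The same reasoning shows $D\setminus A$ is open. If $x\in D\setminus A$, choose $r>0$ with $B(x,r)\subset D$. If some $y\in B(x,r)$ belonged to $A$, then appending the segment $[y,x]\subset B(x,r)\subset D$ to a polygonal path from $a$ to $y$ would give a polygonal path from $a$ to $x$ in $D$, contradicting $x\notin A$. Hence $B(x,r)\subset D\setminus A$, so $D\setminus A$ is open. Combining the three facts with the connectedness of $D$ gives $A=D$, which is exactly the assertion.

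There is no genuine obstacle in this argument; the only point requiring a little care is the bookkeeping in the concatenation step, where one must re-index the vertices of the two polygonal paths so that the terminal vertex of the first coincides with the initial vertex of the second and the combined list still satisfies \eqref{segments}. This is routine and can be dispatched in a single sentence.
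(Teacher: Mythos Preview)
Your proof is correct and follows essentially the same clopen argument as the paper: both define the set of points reachable from $a$ by a polygonal path in $D$, show it is nonempty, open, and closed in $D$, and conclude by connectedness. The only cosmetic difference is that the paper verifies closedness via a sequential argument while you show directly that the complement is open; the underlying ``append one more segment'' idea is identical.
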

	\begin{proof}
		Let $W$ be the set of all points of $D$ having the property given in the lemma. We claim that  $W$ is open. Indeed, let $x\in W$ and let $n\geq 1$, $x_0,x_1,\ldots, x_n\in D$ satisfying~\eqref{segments}. Choose $r>0$ such that  $B(x,r)\subset D$ and let $y\in B(x,r)$. 
		Then, setting $x_{n+1}=y$ we get $x_0,x_1,...,x_n, x_{n+1}\in D$ such that $x_0=a$, $x_{n+1}=y$ and $[x_{k-1},x_{k}]\subset D$ for every $1\leq k \leq n+1$. This means that  $B(x,r)\subset W$ and the claim is proved. To show that $W$ is a closed subset of~$D$, we consider a sequence  $(y_p)_{p\geq 1}$ in $W$ which converges to a point $y\in D$. Let $\varepsilon>0$ and $q\geq 1$ such that $y_q\in B(y,\varepsilon)$. Since $y_q\in W$, there exist $m\geq 1$ and $z_0,z_1,...,z_{m}\in D$ such that $z_0=a,z_m=y_q$ and $[z_{k-1},z_k]\subset D$ for~every $1\leq k\leq m$. To see that $y\in W$, it suffices to take $z_{m+1}=y$. Finally, we conclude that $W$ is open and closed in $D$ at the same time, which yields that $W=D$ since $W\not=\emptyset$ and $D$ is connected.
	\end{proof}
	\begin{theorem}\label{theoo} Let $D\subset \mathbb{R}^d$ be a bounded domain and let $p\in \mathcal{B}^+(D)$ be  a locally bounded function such that  $u:=G_Dp$ is finite at some point in~$D$. Then $u$ is  $\alpha-$Hölder continuous,  $L-$superharmonic in  $D$ and  $L-$harmonic in $D\as \overline{\{p>0\}}$.
	\end{theorem}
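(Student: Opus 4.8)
The plan is to realise $u$ as an increasing limit of Green potentials of bounded, compactly supported densities, to use the monotone convergence results of Section~2 to show that $u$ is $L$-superharmonic, and then to obtain the H\"{o}lder continuity and the $L$-harmonicity outside $\overline{\{p>0\}}$ by splitting $p$ into a compactly supported piece and a piece that vanishes near the point under consideration.

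First I would fix an increasing sequence $(D_n)_{n\ge 1}$ of regular domains with $\overline{D_n}\subset D_{n+1}$ and $\bigcup_{n\ge 1}D_n=D$, and put $p_n:=p\,\mathbf{1}_{D_n}$ and $w_n:=G_Dp_n$. Since $p$ is locally bounded and $\overline{D_n}$ is a compact subset of $D$, each $p_n$ is a bounded nonnegative Borel function on $D$, so $w_n$ is bounded and $\alpha-$H\"{o}lder continuous by Proposition~\ref{pror}(a). Since each $G_D(\cdot,y)$ is $L$-superharmonic in $D$, Fubini's theorem gives, for every ball $B\Subset D$, that $H_Bw_n=\int_D H_B\bigl(G_D(\cdot,y)\bigr)\,p_n(y)\,dy\le\int_D G_D(\cdot,y)\,p_n(y)\,dy=w_n$; as $w_n$ is finite and continuous, Proposition~\ref{mprop} shows that $w_n$ is $L$-superharmonic in $D$. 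Moreover $p_n\uparrow p$ pointwise, so $w_n\uparrow u$ pointwise by the monotone convergence theorem; in particular $u$ is lower semicontinuous and $w_n(x_0)\le u(x_0)<\infty$ for every $n$.

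The crucial point is to upgrade the hypothesis $u(x_0)<\infty$ to $u\in L^1_{\mathrm{loc}}(D)$. I would use the standard fact that a nonnegative $L$-superharmonic function $v$ on $D$ with $v(x_0)<\infty$ is $\lambda_d$-integrable on every ball $B(x_0,\rho)$ with $B(x_0,2\rho)\Subset D$, with an estimate $\int_{B(x_0,\rho)}v\,d\lambda_d\le C(x_0,\rho)\,v(x_0)$: this follows from the Riesz decomposition $v=h+G_{B}\mu$ on $B:=B(x_0,2\rho)$, where $h\in\mathcal{H}_L^+(B)$ and the Riesz measure $\mu:=-Lv$ is nonnegative, by controlling $h$ on $B(x_0,\rho)$ through Harnack's inequality~\eqref{harnack} (together with $h(x_0)\le v(x_0)$), by bounding $\mu\bigl(\overline{B(x_0,\rho)}\bigr)$ by a multiple of $v(x_0)$ (since $G_B(x_0,\cdot)$ is bounded below on $\overline{B(x_0,\rho)}$ by a positive constant while $G_B\mu(x_0)\le v(x_0)$), and by using~\eqref{equa} to integrate $G_B(\cdot,y)$. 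Applying this to each $w_n$ and letting $n\to\infty$ by monotone convergence yields $\int_{B(x_0,\rho_0)}u\,d\lambda_d<\infty$ for some ball $B(x_0,\rho_0)\Subset D$. A connectedness argument then spreads local integrability over all of $D$: the set $O$ of points of $D$ admitting a neighbourhood on which $u$ is $\lambda_d$-integrable is nonempty and open, and it is also closed in $D$, since if $x_j\to x_*\in D$ with $x_j\in O$ then $u(y)<\infty$ for some $y$ arbitrarily close to $x_*$ ($\lambda_d$-a.e.\ point of a small ball around $x_j$ works), and the above bound applied at such a $y$, with a radius fixed by $\operatorname{dist}(x_*,\partial D)$, places $x_*$ inside a ball on which $u$ is $\lambda_d$-integrable. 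Since $D$ is connected, $O=D$, so $u\in L^1_{\mathrm{loc}}(D)$; in particular $u<\infty$ $\lambda_d$-a.e., and Proposition~\ref{sconv} shows that $u=\sup_n w_n$ is $L$-superharmonic in $D$. This propagation of finiteness from a single point is the step I expect to be the main obstacle; once it is secured the remaining assertions follow quickly from the results of Sections~2 and~3.

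For the H\"{o}lder continuity and the $L$-harmonicity I would argue locally. Fix $x_1\in D$ and $r>0$ with $\overline{B(x_1,2r)}\subset D$, and write $p=q_1+q_2$ with $q_1:=p\,\mathbf{1}_{B(x_1,2r)}$. Since $q_1$ is bounded and nonnegative, $G_Dq_1$ is bounded and $\alpha-$H\"{o}lder continuous on $D$ by Proposition~\ref{pror}(a). The function $G_Dq_2=\sup_n G_D\bigl(q_2\mathbf{1}_{D_n}\bigr)$ is lower semicontinuous and $\lambda_d$-a.e.\ finite (it is $\le u$), the same Fubini computation as above gives $H_B(G_Dq_2)\le G_Dq_2$ for balls $B\Subset D$, so $G_Dq_2$ is $L$-superharmonic by Proposition~\ref{mprop}; moreover $LG_Dq_2=-q_2=0$ in the distributional sense on $B(x_1,2r)$, and an $L$-superharmonic function with $Lv=0$ distributionally on an open set has vanishing Riesz measure there and is therefore $L$-harmonic there, so $G_Dq_2\in\mathcal{C}^2\bigl(B(x_1,2r)\bigr)$. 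Consequently $u=G_Dq_1+G_Dq_2$ is $\alpha-$H\"{o}lder continuous on $\overline{B(x_1,r)}$, and since $x_1\in D$ is arbitrary, $u\in\mathcal{C}^\alpha(D)$; in particular $u$ is finite and continuous on $D$. Finally, on the open set $W:=D\setminus\overline{\{p>0\}}$ one has $p\equiv 0$, so $Lu=LG_Dp=-p=0$ in the distributional sense on $W$ (the interchange of integrations being legitimate because $u\in L^1_{\mathrm{loc}}(D)$), and since $u$ is $L$-superharmonic, the same reasoning shows that $u$ is $L$-harmonic in $W=D\setminus\overline{\{p>0\}}$.
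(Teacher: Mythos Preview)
Your argument is correct, but it takes a noticeably different route from the paper's at the crucial propagation step.  The paper never passes through $L^1_{\mathrm{loc}}$ or the Riesz decomposition.  Instead, given $u(x_0)<\infty$ and a ball $B_r=B(x_0,r)\Subset D$, it splits $u=G_D(p\mathbf{1}_{B_r})+G_D(p\mathbf{1}_{B_r^c})$; the first summand is continuous by Proposition~\ref{pror}(a), and for the second the paper computes directly, via Fubini--Tonelli and the $L$-harmonicity of $G_D(\cdot,z)$ in $B_r$ for $z\notin B_r$, that $H_{B_\rho}\bigl(G_D(p\mathbf{1}_{B_r^c})\bigr)=G_D(p\mathbf{1}_{B_r^c})$ for every $0<\rho<r$.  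Since this function is finite at $x_0$, Proposition~\ref{lem01} gives that it is $L$-harmonic (hence finite and continuous) in $B_r$.  Finiteness is then pushed along line segments and, by Lemma~\ref{toplem}, to all of $D$.  The same Fubini computation, rather than a distributional identity plus ``vanishing Riesz measure'', handles the $L$-harmonicity of the far part in the H\"older and $D\setminus\overline{\{p>0\}}$ arguments.

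What this buys: the paper's proof stays entirely within the tools developed in Sections~2--3 (Proposition~\ref{lem01}, Proposition~\ref{pror}, the segment lemma), whereas your route imports the Riesz decomposition, a quantitative mean-value/$L^1$ estimate for superharmonic functions, and a Weyl-type statement (superharmonic plus $Lv=0$ distributionally implies $L$-harmonic) that, while standard for uniformly elliptic operators with smooth coefficients, are not proved in the paper.  Your approach is somewhat more robust and would transfer to settings where the pointwise Fubini identity for $H_B G_D(\cdot,z)$ is less accessible; the paper's approach is shorter and fully self-contained.
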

	\begin{proof}
		The proof is divided in three steps. Given a set $V\subset D$, we define $u_V=G_D(p\textbf{1}_V)$.
		
		{\em Step 1.}  We claim that if $u(x_0)<\infty$ for any  $x_0\in D$  then $u$ is finite and  continuous in the  ball~$B_r:=B(x_0,r)\Subset D$ where~$r>0$. Indeed, we decompose $u$ into the sum $u=v_{r}+w_{r}$ where $v_r=u_{B_r}$ and $w_r=u_{B_r^c}$. We obviously see that~$u_{B_r}$ is finite and  continuous in~$B_r$ because $p\textbf{1}_{B_r}$ is bounded in~$D$.
				On the other hand, given $0<\rho<r$ and $z\in B_r^c:=D\as B_r$, we easily see that $G_D(\cdot,z)$ is $L-$harmonic in~$D\as\{z\}$ and hence Fubini-Tonelli's theorem yields that for every $x\in B_\rho$,
		\begin{eqnarray*}
			H_{B_\rho}w_{r}(x)
			&=& \int_{D\as B_r}\left(\int_{\partial   B_\rho}G_{D}(y,z)\,H_{B_\rho}(x,dy)\right) p(z)\, dz
			\\ &=& \int_{D\as B_r}H_{B_\rho}G_{D}(\cdot,z)(x)\, p(z)\, dz
			\\ &=& \int_{D\as B_r}G_{D}(x,z)p(z)\,dz
			\\ &=& w_{r}(x).  
		\end{eqnarray*}
		Since $w_{r}(x_0)<\infty$, it follows that $H_{B_\rho}w_{r}\not\equiv\infty$ in $B_\rho$.  Then, in virtue of Proposition~\ref{lem01},  $w_{r}$ is $L-$harmonic in $B_\rho$ for every $0<\rho<r$. Whence  $w_{r}$ is $L-$harmonic in $B_r$, which yields in particular that $w_r$ is finite and   continuous in $B_r$. Consequently, $u$ is finite and  continuous in $B_r$.

		{\em Step 2.} Assume that $u(x)<\infty$ for some point $x\in D$ and let $y\in D$ such that  the segment $[x,y]\subset D$.  We claim that $u(z)<\infty$ for all  $z\in [x,y]$. Indeed, it is enough  to show that $u(y)<\infty$. To do this, put $\eta:=d([x,y], D^c)$ and let $0<r<\eta$. Choose an integer~$n$ such that   $nr>|y-x|$ and  define 
		$$
		z_k=x+\frac{k}{n}(y-x)  \quad\mbox{for }k=0,1,\ldots, n.
		$$
		\begin{center} 
			\begin{tikzpicture}
				\draw (2.1,5.91) .. controls (7,4.2)  .. (10,5) ;
				\draw (2.1,0.9) .. controls (5,1.6)  .. (10,0) ;
				\draw (3.5,3.5) -- (9,3.5);
				\draw (10,0) arc (-90:90:2.5cm);
				\node[blue]at (11.6, 4){\textbf{$D$}};
				\draw (2.1,0.9) arc (-90:-270:2.5cm);
				\draw (3.5,3.5) circle (0.7cm); 	
				\draw (4,3.5) circle (0.7cm);
				\draw (4.5,3.5) circle (0.7cm);
				\draw (8.5,3.5) circle (0.7cm);
				\draw (9,3.5) circle (0.7cm);
				\draw[->, red] (3.5,3.5)--(3,4); \node[red]at (3.3,3.9){\textbf{$r$}};
				\draw[->, red] (4,3.5)--(3.5,4);
				\draw[->, red] (8.5,3.5)--(8,4);
				\draw[->, red] (9,3.5)--(8.5,4);
				\foreach \Point in {(3.5,3.5),(4,3.5),(4.5,3.5),(5,3.5),(5.5,3.5),(7.5,3.5),(8,3.5),(8.5,3.5),(9,3.5)}{
					\node at \Point {$\bullet$};
				}
				\draw[<->, blue] (7.2,3.5)--(7.2,4.49);
				\node [black] at (6.5,3.2){$\cdots\;\cdots\;\cdots$};
				\node[blue]at (7.4,4){\textbf{$\eta$}};
				\node[blue]at (3.65,3.3){\textbf{$x$}};
				\node[blue]at (8.8,3.3){\textbf{$y$}};
				\draw[->, red] (3.5,3.5)--(3.5,2);
				\draw[->, red] (4,3.5)--(4,2);
				\draw[->, red] (4.5,3.5)--(4.5,2);
				\draw[->, red] (8.5,3.5)--(8.5,2);
				\draw[->, red] (9,3.5)--(9,2);
				\node[blue]at (3.5,1.75){\textbf{$z_0$}};
				\node[blue]at (4,1.75){\textbf{$z_1$}};
				\node[blue]at (4.5,1.75){\textbf{$z_2$}};
				\node[blue]at (8.4,1.75){\textbf{$z_{n-1}$}};
				\node[blue]at (9.1,1.75){\textbf{$z_n$}};
			\end{tikzpicture}
		\end{center}
		Seeing that $u(z_0)=u(x)<\infty$, we obtain by Step~1 that $u<\infty$ in ${B(z_0,r)}$ and then  $u(z_1)<\infty$ since  $|z_1-z_0| < r$. Using same arguments,  we deduce that $u(z_k)<\infty$ for every $k\in\{1,\ldots,n\}$. In particular  $u(y)=u(z_n)<\infty$ and the claim is proved.
		
		{\em Step 3.} From Steps~1 and 2, we first   conclude  that $u$ is finite and  continuous in the whole domain~$D$. To see this, let~$x_0\in D$ such that $u(x_0)<\infty$.  According to Lemma~\ref{toplem} above, for each $x\in D$  we can find $n\geq 1$ and $x_1,\ldots, x_n\in D$ such that $x_n=x$ and  $[x_{k-1},x_k]\subset D$ for all $1\leq k\leq n$.  Hence, by successive applications of Step~2, we get that $u(x)=u(x_n)<\infty$ and consequently~$u$ is finite in~$D$. The fact that $u$ is continuous in~$D$ follows from Step~1. Now, in order to prove that $u$ is $\alpha-$Hölder continuous in~$D$,  consider a domain  $V\Subset D$ and choose a domain~$W$ such that $V\Subset W\Subset D$. Since $p\textbf{1}_W$ is bounded in~$D$, it is clear that $u_W\in\mathcal{C}^\alpha(D)\subset \mathcal{C}^\alpha(\overline{V})$.
		 On the other hand, the function $u_{W^c}$ is continuous in~$W$ and same arguments as in Step~1 yield that $H_Bu_{W^c}=u_{W^c}$ for every ball $B\Subset W$. Therefore, $u_{W^c}$ is $L-$harmonic in~$W$ and consequently $\alpha-$Hölder continuous in~$W$. Thus $u=u_{W}+u_{W^c}\in\mathcal{C}^\alpha(\overline{V})$ for every domain $V\Subset D$ which means that $u\in\mathcal{C}^\alpha(D)$. 
	 It remains to show  that~$u$ is  $L-$superharmonic in~$D$  and $L-$harmonic in~$U:=D\as\overline{\{p>0\}}$. The first claim follows easily from  Fubini-Tonelli's theorem by observing that $G_D(\cdot,y)$ is $L-$superharmonic in~$D$ for all~$y\in D$. Finally, the function $u$ is $L-$harmonic in~$U$ because it is continuous and $H_Bu=u$ for any ball $B\Subset U$ by analogous calculus as in Step~1. 
	\end{proof}
	
We deduce that for every bounded domain $D$ of $\mathbb{R}^d$  and for every locally bounded  function $p\in \mathcal{B}^+(D)$ such that $G_Dp\not\equiv \infty$ in $D$, we have  
	\begin{equation}\label{gdp}
		LG_{D}p=-p\mbox{ in } D.
	\end{equation}
	Indeed, the previous theorem guarantees  that $G_Dp$ is finite and continuous in $D$ and  for every function  $\varphi\in\mathcal{C}^{\infty}_{c}(D)$, Fubini's theorem  yields   that
	$$
	\int_{D}G_Dp(x) L^{*}\varphi(x) dx=\int_{D}p(y)\Big(\int_{D}G_{D}(x,y)L^*\varphi(x)dx\Big)dy=-\int_{D}p(y)\varphi(y)dy.
	$$

	\begin{proposition}\label{pro2}
		Let $D\subset \mathbb{R}^d$ be a bounded domain. Then $G_{D}$ is a compact operator on~$\mathcal{B}_{b}(D)$ endowed with the uniform norm $\|\cdot\|_{\infty}$. 
	\end{proposition}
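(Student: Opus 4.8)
The plan is to run an Arzel\`a--Ascoli argument on the image of the unit ball. Since $G_D$ is linear and $(\mathcal{B}_b(D),\|\cdot\|_\infty)$ is a Banach space, it suffices to show that
\[
\mathcal{K}:=\big\{G_Dp:\ p\in\mathcal{B}_b(D),\ \|p\|_\infty\le1\big\}
\]
is relatively compact in $(\mathcal{B}_b(D),\|\cdot\|_\infty)$. First I would note that, by Proposition~\ref{pror}(a), every element of $\mathcal{K}$ is a bounded continuous function on $D$; regarding these as functions on the compact set $\overline D$, I would then prove that $\mathcal{K}$ is uniformly bounded and uniformly equicontinuous, so that $\mathcal{K}$ is relatively compact in $\mathcal{C}(\overline D)$ by Arzel\`a--Ascoli. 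Since the uniform norm of a continuous function on $\overline D$ equals its supremum over $D$, this transfers to relative compactness of $\mathcal{K}$ in $(\mathcal{B}_b(D),\|\cdot\|_\infty)$, i.e.\ to compactness of $G_D$.

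For the uniform bound I would use the kernel estimate~\eqref{equa}: for every $p$ with $\|p\|_\infty\le1$ and every $x\in D$,
\[
|G_Dp(x)|\le\int_DG_D(x,y)\,dy\le c\int_D\Gamma(x-y)\,dy\le c\int_{B}\Gamma(z)\,dz=:M<\infty,
\]
where $B$ is a fixed ball containing $D-x$ for all $x\in D$ and the last integral is finite because $\Gamma$ is locally integrable. For equicontinuity I would start from
\[
|G_Dp(x)-G_Dp(x')|\le\int_D|G_D(x,y)-G_D(x',y)|\,dy
\]
and split the integral over a small neighbourhood $A$ of $\{x,x'\}$ and over $D\setminus A$. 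On $A$, the estimate~\eqref{equa} together with the absolute continuity of the Lebesgue integral of the \emph{fixed} function $c\,\Gamma\in L^1(B)$ makes the contribution smaller than any prescribed $\varepsilon$ once $|A|$ is small, uniformly in $p$ and in $x,x'$; on $D\setminus A$ the distances $|x-y|,|x'-y|$ are bounded below, and there property~{\rm(G1)} (continuity of the Green kernel off the diagonal) forces the contribution to tend to $0$ as $|x-x'|\to0$. Combining the two pieces gives a common modulus of continuity for all of $\mathcal{K}$.

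I expect the delicate point to be exactly the equicontinuity near $\partial D$: upgrading the previous estimate from ``locally in $D$'' (which is all Proposition~\ref{pror}(a) literally provides) to ``uniformly up to the boundary of $D$''. This is where the \emph{global} bound~\eqref{equa} is essential --- equivalently, the uniform integrability of the family $\{G_D(x,\cdot):x\in D\}$, and, when $D$ is regular, the domination $|G_Dp|\le G_D\textbf{1}_D\in\mathcal{C}_0(D)$ (so that the tails near $\partial D$ are uniformly small). Everything else --- the uniform bound, the splitting of the integral, the Arzel\`a--Ascoli step, and the passage from $\overline D$ back to $D$ --- is routine.
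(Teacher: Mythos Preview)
Your proposal is correct and follows essentially the same route as the paper: Arzel\`a--Ascoli on the image of the unit ball, with uniform boundedness coming from the kernel estimate~\eqref{equa} and equicontinuity coming from a near-diagonal/off-diagonal splitting. The paper packages your splitting argument as ``uniform $\lambda_d$-integrability of the family $\{G_D(x,\cdot):x\in D\}$ plus Vitali's convergence theorem'': it proves $\sup_{x\in D}\int_A G_D(x,y)\,dy\to0$ as $\lambda_d(A)\to0$ by the same decomposition $A\subset B(x,r)\cup(A\setminus B(x,r))$ and then invokes Vitali to get $\int_D|G_D(z,y)-G_D(x,y)|\,dy\to0$ as $z\to x$, which is exactly the equicontinuity bound you write down. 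In other words, the ``delicate point'' you single out---the uniform integrability of the kernels---is precisely what the paper isolates and proves; your hands-on splitting is that Vitali step unpacked. One small remark: on the off-diagonal piece, continuity of $G_D(\cdot,y)$ alone (property~(G1)) gives only pointwise convergence of the integrand, so to pass to the limit in the integral you still need the uniform domination by $c\,\overline\Gamma(r)$ on $\{|x-y|\ge r\}$---i.e.\ dominated convergence, which is again a manifestation of the same uniform integrability.
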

	\begin{proof}
		We have to show that $
		{\mathcal{F}}=\{G_{D}u: u\in\mathcal{B}_b(D)\mbox{ and }\|u\|_{\infty}\leq 1\}
		$
		is relatively compact in $\mathcal{B}_{b}(D)$ endowed with  $\|\cdot\|_{\infty}$. We obviously see that
		$
		\|G_{D}u\|_{\infty}\leq \|G_D1\|_{\infty} <\infty
		$ 
		for all $u\in\mathcal{B}_b(D)$ such that $\|u\|_{\infty}\leq 1$,		which means that  ${\mathcal{F}}$ is uniformly bounded. Thus, in view of  Ascoli's theorem, the proof will be finished provided we show that the family ${\mathcal{F}}$  is  equicontinuous in~$D$. In order to do this,  let $A\subset D$ be Borel set,  $x\in D$ and $r>0$. Then, in virtue of~\eqref{equa}, we have
		$$
		\int_{A}G_{D}(x,y)dy    \leq c\int_{B(x,r)}\Gamma(x-y)\,dy+c\int_{A\as B(x,r)}\Gamma(x-y)dy.
		$$ 
		By a simple computation, there exists a real constant  $c_1>0$,  depending only on $d$, such that   
		$$
		\int_{B(x,r)}\Gamma(x-y)\,dy\leq  c_1 \int_0^r \overline\Gamma(t)t^{d-1}\,dt \quad\mbox{and}\quad 
		\int_{A\as B(x,r)}\Gamma(x-y)dy\leq \overline\Gamma(r)\lambda_d(A).
		$$
		Therefore, for every $r>0$ we get that 
		$$
		\limsup_{\lambda_d(A)\to 0}\left(\sup_{x\in D}\int_{A}G_{D}(x,y)dy \right)\leq c\, c_1 \int_0^r \overline\Gamma(t)t^{d-1}\,dt. 
		$$
		Seeing that  $\int_0^r \overline\Gamma(t)t^{d-1}\,dt\to 0$ if $r\to 0$, we deduce that 
		$$
		\sup_{x\in D}\int_{A}G_{D}(x,y)dy \to 0\quad\mbox{if}\quad \lambda_d(A)\to 0.
		$$
		Whence  the family ${\mathcal{R}}=\{G_{D}(x,\cdot):x\in D\}$ is uniformly $\lambda_d$-integrable in $D$. Consequently, by  Vitali's convergence theorem, we conclude   that for all $x,z\in D$,
		$$
		\sup_{ \|u\|_{\infty}\leq 1}\left| G_Du (z)-G_Du(x)\right|\leq \int_{D}\left| G_{D}(z,y)-G_{D}(x,y)\right| dy \to 0 \quad\mbox{if}\quad z\to x.
		$$
		Thus 
		$\mathcal{F}$ is equicontinuous in the domain~$D$.    	
	\end{proof}
	\section{Semilinear Dirichlet problem }

	Throughout this work, we consider   a  regular domain~$D$ in $\mathbb{R}^d$, a Borel measurable locally bounded function $\xi: D\rightarrow \mathbb{R}_{+}$   and a nondecreasing continuous function  $\psi: \mathbb{R}_+\rightarrow \mathbb{R}_{+}$  such that $\psi(t)=0$ if and only if  $t=0$. We are concerned with existence and uniqueness of nonnegative solution to  the semilinear Dirichlet problem 
	\begin{equation}\label{base}
		\left\{\begin{array}{rcll}
			-Lu+\xi\psi(u) &=& g &\mbox{in }D,
			\\u &=& f &\mbox{on }\partial D,
		\end{array}\right.
	\end{equation}
	where $f: \partial D\rightarrow \mathbb{R}_{+}$ is  continuous  and $g:D\rightarrow \mathbb{R}_{+}$ is Borel measurable locally bounded. It will be convenient to extend the function $\psi$ to $\mathbb{R}$ by setting $\psi(t)=0$ if $t\leq 0.$  By a nonnegative solution in $D$ to equation
	\begin{equation}\label{peq}
		-Lu+\xi\psi(u) = g,
	\end{equation} 
	we mean a continuous function $u: D\to\mathbb{R}_+$ satisfying~\eqref{peq} 
	in  the distributional sense,  i.e, for all  $\varphi \in \mathcal{C}_{c}^{\infty}(D)$,
	\begin{equation}
		-\int_{D}uL^{*}\varphi\, d\lambda_d+\int_{D}\xi\psi(u)\varphi \, d\lambda_d = \int_{D}g\varphi \, d\lambda_d. 
	\end{equation} 
	The boundary condition in problem~\eqref{base} means that  for all $z\in\partial D$,
	\begin{equation}
		\lim_{x\in D, \, x\rightarrow z} u(x) =f(z).
	\end{equation} 
	
	\begin{proposition}\label{lemcom}
		Let $\Psi:\mathbb{R}\to \mathbb{R}_+$ be  nondecreasing and let $u, v\in\mathcal{B}(D)$ be two locally bounded functions such that $G_{D}(\xi\Psi(u))$ and $G_{D}(\xi\Psi(v)))$ are not identically infinite in~$D$. Assume that 
		$$s:=u-v+G_{D}(\xi\Psi(u))-G_{D}(\xi\Psi(v))$$ is a nonnegative $L-$superharmonic function in $D$. Then $u\geq v$ in $D$.
	\end{proposition}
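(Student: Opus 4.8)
The plan is to prove that $m:=\min(u-v,0)$ vanishes identically in $D$, which is exactly the assertion $u\geq v$. First I would record the setup: since $u$ and $v$ are locally bounded and $\Psi$ is nondecreasing, $\xi\Psi(u)$ and $\xi\Psi(v)$ are nonnegative, locally bounded Borel functions, and as their Green potentials are not identically infinite, Theorem~\ref{theoo} gives that $G_D(\xi\Psi(u))$ and $G_D(\xi\Psi(v))$ are finite and continuous in $D$. Hence
\[
u-v=s-G_D(\xi\Psi(u))+G_D(\xi\Psi(v))
\]
is finite and lower semicontinuous in $D$ (an $L$-superharmonic function plus two continuous ones), so $\Omega:=\{x\in D:u(x)<v(x)\}$ is open. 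I argue by contradiction and assume $\Omega\neq\emptyset$.

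Next I would localise the difference of the two potentials to $\Omega$. On $\Omega$ one has $\Psi(u)\leq\Psi(v)$ and on $D\setminus\Omega$ one has $\Psi(u)\geq\Psi(v)$, so putting
\[
q:=\xi\bigl(\Psi(v)-\Psi(u)\bigr)\mathbf{1}_{\Omega},\qquad r:=\xi\bigl(\Psi(u)-\Psi(v)\bigr)\mathbf{1}_{D\setminus\Omega},
\]
we get two nonnegative, locally bounded Borel functions with $q\leq\xi\Psi(v)$, $r\leq\xi\Psi(u)$ (so $G_Dq,G_Dr$ are finite) and $\xi\Psi(v)-\xi\Psi(u)=q-r$, whence $u-v=s+G_Dq-G_Dr$ in $D$. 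By Theorem~\ref{theoo}, $G_Dq$ and $G_Dr$ are $L$-superharmonic in $D$; moreover $r$ is supported in $D\setminus\Omega$, a set relatively closed in $D$ and disjoint from the open set $\Omega$, so Theorem~\ref{theoo} also gives that $G_Dr$ is $L$-harmonic in $\Omega$. Thus on $\Omega$ the function $u-v=(s+G_Dq)+(-G_Dr)$ is the sum of the nonnegative $L$-superharmonic function $s+G_Dq$ and the $L$-harmonic function $-G_Dr$, hence $u-v$ is $L$-superharmonic in $\Omega$.

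Now I would pass to $m=\min(u-v,0)$, which is finite, lower semicontinuous, $m\leq0$, with $\{m<0\}=\Omega$, and check that $m$ satisfies property $(\mathcal M_*)$ in all of $D$: at a point where $m=0$ this holds trivially because $m\leq0$, and at a point $x_0\in\Omega$ one picks a ball $B(x_0,\rho)\Subset\Omega$, on whose boundary $m$ agrees with $u-v$, and uses that $u-v$ satisfies $(\mathcal M_*)$ there, being $L$-superharmonic in $\Omega$. By Proposition~\ref{mprop}, $m$ is then $L$-superharmonic in $D$, so $-m=(v-u)_+$ is $L$-subharmonic in $D$; and since $s\geq0$ and $G_D(\xi\Psi(v))\geq0$ we have $u-v\geq-G_D(\xi\Psi(u))$, so that $0\leq -m\leq G_D(\xi\Psi(u))$ in $D$. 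To finish, fix a regular exhaustion $D_n\nearrow D$: the functions $H_{D_n}(-m)$ are nonnegative, $L$-harmonic in $D_n$, nondecreasing in $n$, and dominated by $H_{D_n}\bigl(G_D(\xi\Psi(u))\bigr)$; the latter tends to $0$ in $D$ because $G_D(\xi\Psi(u))=\int_D G_D(\cdot,y)\,\xi(y)\Psi(u(y))\,dy$, each $G_D(\cdot,y)$ is a potential so $H_{D_n}G_D(\cdot,y)\to0$, and one passes to the limit by Fubini, dominated convergence and Theorem~\ref{bre_cv}. Hence $H_{D_n}(-m)\equiv0$ for every $n$, so $-m\leq H_{D_n}(-m)=0$ in each $D_n$, i.e. $m\equiv0$, contradicting $\Omega\neq\emptyset$.

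I expect the main obstacle to be precisely this global step. The function $u-v$ is only shown to be $L$-superharmonic on $\Omega$, and nothing is assumed on the behaviour of $u,v$ near $\partial D$, so Proposition~\ref{prinmin} cannot be applied to $u-v$ itself. Truncating to $m=\min(u-v,0)$ is the device that makes $(\mathcal M_*)$, hence $L$-superharmonicity, hold on all of $D$ without any boundary information; the remaining ingredient, that a nonnegative $L$-subharmonic function dominated by a Green potential is identically zero, is routine but has to be drawn from the potential theory of Section~3.
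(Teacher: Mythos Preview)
Your argument is correct, but it is organised quite differently from the paper's. The paper splits $w:=\xi(\Psi(u)-\Psi(v))$ into its positive and negative parts, exhausts $\{w^+>0\}$ by compacts $K_n$, and applies the minimum principle (Proposition~\ref{prinmin}) to $\theta_n:=s+G_Dw^--G_D p_n$ on $U_n:=D\setminus K_n$; the boundary condition on $\partial D$ comes from Proposition~\ref{pror}(a) (so the regularity of $D$ is used explicitly), and letting $n\to\infty$ gives $u-v\geq 0$ on $D\setminus\{w^+>0\}$, hence on all of $D$. You instead show that $u-v$ is $L$-superharmonic on $\Omega=\{u<v\}$, pass to the truncation $m=\min(u-v,0)$, verify $(\mathcal M_*)$ globally so that $m$ is $L$-superharmonic in all of $D$, and then kill $-m$ by the domination principle: a nonnegative $L$-subharmonic function majorised by a Green potential must vanish, which you obtain from the exhaustion $D_n\nearrow D$, Theorem~\ref{bre_cv} and property~(G3'). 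The paper's route is more hands-on and uses only the minimum principle plus the boundary vanishing of bounded Green potentials; your route is a clean ``subharmonic $\leq$ potential $\Rightarrow 0$'' argument that, as you noticed, does not rely on the behaviour of $u,v$ near $\partial D$ and in fact does not need Proposition~\ref{pror}(a) at all (the limit $H_{D_n}G_D(\cdot,y)\to 0$ follows already from~(G3')). Both proofs are short; yours trades the compact exhaustion of a level set for a global truncation and a Riesz-type step.
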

	\begin{proof}
		We first observe that $G_Dw$ is finite and continuous in~$D$ by Theorem~\ref{theoo}, where  $w:=\xi\Psi(u)-\xi\Psi(v)$. 
		The fact that $\Psi$ is nondecreasing yields that  $\{w^+>0\}\subset \{u>v\}$ and consequently  
		\begin{equation}\label{eqcomp}
			s+G_{D}w^-\geq G_{D}w^+ \mbox{ in } \{w^+>0\}.
		\end{equation}
		We define $p_n:=n\wedge w^+\textbf{1}_{K_n}$  where $(K_n)_{n\geq 1}$ is a sequence  of compact sets satisfying 
		$$
		K_n\subset K_{n+1}\subset \{w^+>0\} \mbox{ for all }n\geq 1\quad\mbox{and}\quad \lambda_d(\cup_{n\geq 1}K_n)=\lambda_d(w^+>0).
		$$
		According to   Theorem~\ref{theoo},  for every $n\geq 1$,  $G_Dp_n$ is $L$-harmonic in  $U_n:=D\as K_n $ and consequently $\theta_n:=s+G_Dw^--G_Dp_n$  is   $L-$superharmonic in~$U_n$. 
		On the other hand, since  $p_n$ is bounded and $D$ is regular, Proposition~\ref{pror} yields that $\lim_{x\in D,x\to z}G_D p_n(x)=0$ for every $z\in \partial D$. Then, 
		for every $z\in\partial U_n\cap\partial D$,
		$$
		\liminf_{x\in U_n,\, x\to z}\theta_n(x)=\liminf_{x\in U_n,\, x\to z}(s+G_Dw^-)(x)\geq 0 
		$$
		Moreover, since $K_n\subset\{w^+>0\}$, it follows from~\eqref{eqcomp} that  $\theta_n \geq 0$ in $K_n$. Hence, seeing that $\theta_n$ is lower semicontinuous in $D$, we deduce that  for every  $z\in \partial U_n\cap D\subset K_n$,  $\liminf_{x\in U_n,\,x\to z}\theta_n(x)\geq 0$.  
		We then conclude by Proposition~\ref{prinmin} that for every $n\geq 1$,
		\begin{equation*}
			s+G_Dw^--G_Dp_n\geq 0 \mbox{ in }U_n.
		\end{equation*}
		Letting $n$ tend to $\infty$, we obtain that $u-v=s+G_Dw^- -G_Dw^+\geq 0$ in $D\as\{w^+>0\}$ which yields that $u-v\geq 0$ in $D$.
	\end{proof}
	\begin{theorem}\label{thexis}
		Let	$s\in\mathcal{C}^+(D)$ be  $L-$superharmonic in $D$ such that $G_D(\xi\psi(s))\not\equiv \infty$. Then there exists one and only one function  $u\in \mathcal{C}^+(D)$ such that 
		\begin{equation}\label{qe}
			u+G_D(\xi\psi(u))=s\mbox{ in }D.
		\end{equation}
	\end{theorem}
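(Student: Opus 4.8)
The strategy is to read off uniqueness from the comparison principle of Proposition~\ref{lemcom} and to obtain existence by a Schauder fixed‑point argument resting on the compactness of the Green operator (Proposition~\ref{pro2}) together with the regularity of Green potentials (Theorem~\ref{theoo}).

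\emph{Uniqueness} is immediate: if $u_1,u_2\in\mathcal C^+(D)$ both satisfy \eqref{qe}, then $u_1-u_2+G_D(\xi\psi(u_1))-G_D(\xi\psi(u_2))=s-s\equiv 0$ is a nonnegative $L$-superharmonic function, so Proposition~\ref{lemcom} (with $\Psi=\psi$, the extension of $\psi$ by $0$ on $]-\infty,0]$) gives $u_1\ge u_2$; by symmetry $u_1=u_2$.

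\emph{Existence.} I would substitute $u=s-v$, so that \eqref{qe} becomes the fixed‑point equation $v=\Lambda(v):=G_D\big(\xi\psi((s-v)^+)\big)$. Put $P:=G_D(\xi\psi(s))$; by hypothesis $P<\infty$, and by Theorem~\ref{theoo} it is finite, $\alpha$-Hölder continuous and $L$-superharmonic in $D$. Since $0\le(s-v)^+\le s$ and $\psi$ is nondecreasing, Theorem~\ref{theoo} shows that $\Lambda$ maps the convex set $\mathcal K:=\{v\in\mathcal C(D):0\le v\le P\}$ into itself. One checks that $\Lambda$ is continuous for local uniform convergence: if $v_n\to v$ in $\mathcal K$ then $\xi\psi((s-v_n)^+)\to\xi\psi((s-v)^+)$ pointwise and is dominated by $\xi\psi(s)$, whose potential is finite, so $\Lambda(v_n)\to\Lambda(v)$ by dominated convergence (locally uniformly, once compactness is known). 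For relative compactness of $\Lambda(\mathcal K)$ in $\mathcal C(D)$, write $q_v:=\xi\psi((s-v)^+)\le q:=\xi\psi(s)$ and decompose $G_Dq_v=G_D(q_v\wedge n)+G_D\big((q_v-n)^+\big)$ with $0\le G_D((q_v-n)^+)\le G_D((q-n)^+)=:R_n$. The $R_n$ form a decreasing sequence of continuous functions tending to $0$ pointwise, hence locally uniformly by Dini's theorem, while for each fixed $n$ the family $\{G_D(q_v\wedge n):v\in\mathcal K\}$ is equicontinuous by Proposition~\ref{pro2} (the $q_v\wedge n\le n$ are uniformly bounded). Combining these, $\{G_Dq_v:v\in\mathcal K\}$ is equicontinuous on every compact subset of $D$ and is dominated by the continuous function $P$; Arzelà–Ascoli then gives relative compactness, and the Schauder–Tychonoff theorem in the Fréchet space $\mathcal C(D)$ produces a fixed point $v^*\in\mathcal K$ with $v^*=G_D\big(\xi\psi((s-v^*)^+)\big)$.

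The real obstacle is the final step: showing that $u^*:=s-v^*\in\mathcal C(D)$ is \emph{nonnegative}, equivalently $v^*\le s$. The trivial bound $v^*\le P$ is useless, since $P$ may well exceed $s$. Instead, put $q^*:=\xi\psi((s-v^*)^+)\ge 0$, locally bounded, so that $v^*=G_Dq^*$ is a potential and $\{q^*>0\}\subset\{v^*<s\}$ because $\psi(t)>0$ precisely for $t>0$. Hence $G_Dq^*\le s$ on $\{q^*>0\}$, which carries the measure $q^*\,d\lambda_d$, and the domination principle for the Green function (valid in Hervé's framework; or, elementarily, by applying the classical domination principle to $q^*\mathbf 1_K$ over an exhaustion $K\Subset D$ and letting $K\uparrow D$) forces $v^*=G_Dq^*\le s$ throughout $D$. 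Then $(s-v^*)^+=u^*\ge 0$, so $q^*=\xi\psi(u^*)$ and $u^*+G_D(\xi\psi(u^*))=u^*+v^*=s$, which completes the proof. An alternative route bypassing the domination principle is to prove the theorem first for bounded $\xi$ and bounded $s$ — where $P$ vanishes on $\partial D$ and a plain maximum‑principle argument applies — and then to relax boundedness of $\xi$ via the approximations $\xi\wedge n$ (which yield a decreasing sequence of solutions by Proposition~\ref{lemcom}) and of $s$ via the truncations $s\wedge n$, passing to the limit with the same compactness estimates.
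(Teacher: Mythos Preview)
Your argument is correct, but the route differs from the paper's. The paper proceeds in two steps: first it solves $v+G_V(\xi\psi_a(v))=s$ on a relatively compact $V\Subset D$ by the Banach-space Schauder theorem (using only the compactness of $G_V$ on $\mathcal B_b(V)$ from Proposition~\ref{pro2}, with the truncation $\psi_a(t)=\psi(a\wedge t)$, $a=\sup_{\overline V}s$, so as to stay among bounded functions), and then exhausts $D$ by regular domains $D_n\Subset D_{n+1}$, shows via formula~\eqref{ggg} and Proposition~\ref{lemcom} that the resulting solutions $u_n$ decrease, and passes to the limit by dominated convergence. Your approach instead attacks $D$ in one shot with Schauder--Tychonoff in the Fr\'echet space $\mathcal C(D)$, paying for this directness with a more delicate compactness estimate (the truncation/Dini decomposition $G_Dq_v=G_D(q_v\wedge n)+G_D((q_v-n)^+)$). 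Both work; the paper's version is more elementary in its tools but needs the two-layer structure, while yours is conceptually shorter once the Fr\'echet-space fixed-point theorem and the Dini tail control are granted.

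One unnecessary detour in your write-up: the domination principle is not needed to get $u^*=s-v^*\ge 0$. Your fixed-point equation reads $u^*+G_D\big(\xi\psi((u^*)^+)\big)=s$; since the paper extends $\psi$ by $0$ on the negatives, this is $u^*-0+G_D(\xi\psi(u^*))-G_D(\xi\psi(0))=s$, which is nonnegative and $L$-superharmonic by hypothesis, so Proposition~\ref{lemcom} (which you already invoke for uniqueness) gives $u^*\ge 0$ directly. This is exactly how the paper handles nonnegativity in Step~1 of its proof.
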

	\begin{proof}
		Due to the previous proposition, the integral equation \eqref{qe} has at most one solution.  The existence of a solution  will be established  in two steps.
		
		{\em Step 1.} Consider first an open set $V\Subset D$. Let $a=\sup_{x\in \overline{V}}|s(x)|$ and define
		$$
		\Lambda(u)=s-G_V(\xi{\psi_a}(u)) \quad\mbox{for all }u\in\mathcal{B}_b(V),
		$$	
		where 	${\psi_a}(t)= \psi(a\wedge t)$ for every $t\in\mathbb{R}$. 
		We  claim that  $\Lambda$ is continuous in $\mathcal{B}_b(V)$ endowed with its uniform norm.  Indeed, let $(u_n)_{n\geq 1}\subset \mathcal{B}_b(V)$ be a sequence converging   to a function  $u\in \mathcal{B}_b(V)$. Then, due to the dominated convergence theorem, $(G_V(\xi{\psi_a}(u_n))_{n\geq 1}$ is pointwise  convergent to $G_V(\xi\psi_a(u))$.   Assume that the last convergence does not hold uniformly. Thus, there exist $\varepsilon>0$ and a subsequence $(u_{n_k})_{k\geq 1}$ of $(u_n)_{n\geq 1}$
		such that 
		\begin{equation}\label{unk}
			\sup_{x\in V}\left|G_V(\xi{\psi_a}(u_{n_k}))(x)-G_V(\xi{\psi_a}(u))(x)\right|\geq \varepsilon \quad\mbox{for all }k\geq 1.
		\end{equation}
		Since the operator  $G_V$ is compact  and  the sequence $(\xi{\psi_a}(u_{n_k}))_{k\geq 1}$ is  bounded  in $\mathcal{B}_b(V)$, we can find a subsequence $(w_k)_{k\geq 1}$ of  $(u_{n_k})_{k\geq 1}$ such that 
		$(G_V(\xi{\psi_a}(w_k)))_{k\geq 1}$ converges to $G_V(\xi{\psi_a}(u))$ with respect to the uniform norm in $\mathcal{B}_b(V)$. This yields a contradiction with~\eqref{unk} above. Hence $\Lambda$ is continuous in $\mathcal{B}_b(V)$ and the claim is proved. Using again the fact that the operator $G_V$ is compact, we check out that $ \Lambda(\mathcal{B}_b(V)) $ is  relatively compact in $\mathcal{B}_b(V)$. Then, we conclude  in virtue of the Schauder's fixed point theorem, that there exists $v\in \mathcal{B}_b(V)$ such that $\Lambda(v)=v$. On the other hand, using Proposition~\ref{lemcom}, it is clear that $v\geq 0$ in $V$ and thereby $0\leq v \leq s\leq a$ in $V$. This yields that ${\psi_a}(v)=\psi(v)$ and consequently  
		\begin{equation}\label{eqcom}
			v+G_V(\xi\psi(v))=s\quad\mbox{in } V.
		\end{equation}	
		Observe that $v$ is continuous in $V$  since   $G_V(\xi\psi(v))$ is continuous in $V$  by   Proposition~\ref{pror}.(a).    
		
		{\em Step 2.} Choose a sequence $(D_{n})_{n\geq 1}$ of regular open sets such that 
		\begin{equation*}
			D_n\Subset D_{n+1} \quad\mbox{for all }n\geq 1\quad \mbox{ and }\cup_{n\geq 1}D_n=D.
		\end{equation*}
		Let, for every $n\geq 1$,  $u_{n}$ be  the solution of 
		\begin{equation}\label{eq2}
			u_{n}+G_{D_{n}}(\xi\psi(u_{n}))=s  \quad \mbox{in }D_{n}.
		\end{equation}
		Then, in virtue of formula~\eqref{ggg} we observe that 
		$$
		u_{n+1}+G_{D_n}(\xi\psi(u_{n+1}))=s-H_{D_n}G_{D_{n+1}}(\xi\psi(u_{n+1})) \quad\mbox{in }D_n
		$$
		and hence
		$
		u_n-u_{n+1}+G_{D_n}(\xi\psi(u_n))-G_{D_n}(\xi\psi(u_{n+1}))
		$
		is a nonnegative $L-$harmonic function in $D_n$. So, in virtue of  Proposition~\ref{lemcom}, we have  $0\leq u_{n+1}\leq u_n\leq s$ in $D_n$ for all $n\geq 1$. Let $u:D\to{\mathbb{R}}_+$ be the function given for every $x\in D$ by 
		$$
		u(x)=\lim_{n\to\infty}u_n(x)=\inf_{n\geq 1} u_{n}(x).
		$$
		Seeing that  $G_D(\xi\psi(s))<\infty$ in $D$ and  applying  the dominated convergence theorem, we get for every $x\in D$ that 
		$$
		\lim_{n\rightarrow \infty}\int_{D_{n}}G_{D_{n}}(x,y)\xi(y)\psi(u_{n}(y))dy  =  \int_{D}G_{D}(x, y)\xi(y)\psi(u(y))dy.
		$$
		Thus, letting $n$ tend to infinity   in~\eqref{eq2} we conclude that $u$ satisfies~\eqref{qe}.  The continuity of $u$ follows obviously from the fact that $G_D(\xi\psi(u))$ is continuous in~$D$. 
	\end{proof}
	
	For every $f\in\mathcal{B}^+(\partial D)$ and every locally bounded function $g\in\mathcal{B}^+(D)$ such that $H_Df$ and $G_Dg$ are not identically infinite in~$D$, it 
	follows from Proposition~\ref{lem01} and Theorem~\ref{theoo} that the function  
	\begin{equation*}
		S_D(f,g)=H_Df +G_Dg
	\end{equation*}
	is   continuous and $L-$superharmonic in $D$. Then, assuming that 
	$G_D(\xi\psi( S_D(f,g)) )\not\equiv\infty$ in~$D$,
	the previous theorem yields that the integral  equation
	\begin{equation}\label{s2}
		u +\int_D G_D(\cdot,y)\xi(y)\psi(u(y))\,dy= S_D(f,g)
	\end{equation}
	admits a unique   solution  $u\in \mathcal{C}^+(D)$. In all the following, we use the notation 
	\begin{equation}
		u=U_D^\xi(f,g).
	\end{equation}
	For $i=1,2,$ let $f_i\in\mathcal{B}^+(\partial D)$ and $g_i,\xi_i \in\mathcal{B}^+(D)$ be locally bounded in~$D$ such that  $s_i=S_D(f_i, g_i)<\infty$ and $G_D(\xi_i\psi(s_i)))<\infty$ in~$D$.  If $f_1\leq f_2$, $g_1\leq g_2$ and $\xi_1\geq \xi_2$, it follows immediately from Proposition~\ref{lemcom} that 
	\begin{equation}\label{compf1f2}
		U_D^{\xi_1}(f_1, g_1)\leq U_D^{\xi_2}(f_2,g_2). 
	\end{equation}

	\begin{proposition}\label{copro}
		We consider a locally bounded function $g\in\mathcal{B}^+(D)$  and let  $f\in\mathcal{B}^{+}(\partial D)$	 such that $s=S_D(f,g)<\infty$ and $G_D(\xi\psi(s))<\infty$ in~$D$. Let $(\xi_n)_{n\geq 1}, (g_n)_{n\geq 1}\subset \mathcal{B}^+(D) $ and $(f_n)_{n\geq 1}\subset \mathcal{B}^{+}(\partial D) $  be nondecreasing sequences which are   pointwise convergent respectively to $\xi,g$ and~$f$. Then:
		\begin{enumerate}
			\item[a)]  $U_D^{\xi_n}(f, g)$ decreases to $U_D^{\xi}(f, g)$.
			\item[b)] 			$U_D^{\xi}(f_n, g_n)$ increases to $U_D^{\xi}(f, g)$.
		\end{enumerate}
	\end{proposition}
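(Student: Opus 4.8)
The plan is to handle both parts by the same three-step scheme: first use the comparison relation~\eqref{compf1f2} to show the sequence in question is monotone and trapped by $U_D^\xi(f,g)$, so that its pointwise limit exists; second, pass to the limit in the integral equation~\eqref{s2} satisfied by each term; third, recognize the limit as a continuous solution of~\eqref{s2} and invoke the uniqueness part of Theorem~\ref{thexis} to identify it with $U_D^\xi(f,g)$.

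For (a), set $s=S_D(f,g)$ and $u_n=U_D^{\xi_n}(f,g)$; each $u_n$ is well defined since $\xi_n\le\xi$ forces $G_D(\xi_n\psi(s))\le G_D(\xi\psi(s))<\infty$ in $D$. Applying~\eqref{compf1f2} with unchanged boundary data $f$ and source $g$ but with $\xi_{n+1}\ge\xi_n$, resp.\ $\xi\le\xi_n$, gives $U_D^\xi(f,g)\le u_{n+1}\le u_n\le s$, so $u:=\inf_n u_n=\lim_n u_n$ exists with $0\le u\le s$. Now I would pass to the limit in $u_n+G_D(\xi_n\psi(u_n))=s$ pointwise in $x\in D$: the integrands satisfy $\xi_n(y)\psi(u_n(y))\to\xi(y)\psi(u(y))$ (using $\xi_n\to\xi$ pointwise and continuity of $\psi$) and are dominated by $\xi\psi(u_1)\le\xi\psi(s)$, which is $G_D(x,\cdot)$-integrable because $G_D(\xi\psi(s))(x)<\infty$; the dominated convergence theorem then yields $G_D(\xi_n\psi(u_n))(x)\to G_D(\xi\psi(u))(x)$. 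Hence $u+G_D(\xi\psi(u))=s$ in $D$, and since $u=s-G_D(\xi\psi(u))$ is continuous by Theorem~\ref{theoo}, the uniqueness in Theorem~\ref{thexis} forces $u=U_D^\xi(f,g)$.

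For (b), which is cleaner because everything is monotone, write $s_n=S_D(f_n,g_n)=H_Df_n+G_Dg_n$; by monotone convergence applied to the measures $H_D(x,\cdot)$ and $G_D(x,\cdot)$ one has $s_n\uparrow s<\infty$, and $\psi(s_n)\le\psi(s)$ gives $G_D(\xi\psi(s_n))\le G_D(\xi\psi(s))<\infty$, so $v_n:=U_D^\xi(f_n,g_n)$ is well defined. By~\eqref{compf1f2} (same $\xi$, increasing data) the sequence $(v_n)$ is nondecreasing with $v_n\le U_D^\xi(f,g)$, hence $v:=\sup_n v_n$ exists and $v\le s$. In $v_n+G_D(\xi\psi(v_n))=s_n$ the monotonicity of $\psi$ makes $\xi\psi(v_n)$ increase to $\xi\psi(v)$, so monotone convergence gives $G_D(\xi\psi(v_n))\uparrow G_D(\xi\psi(v))$; letting $n\to\infty$ yields $v+G_D(\xi\psi(v))=s$ in $D$. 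As before $v$ is continuous, so uniqueness in Theorem~\ref{thexis} gives $v=U_D^\xi(f,g)$.

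The only delicate points are bookkeeping---checking that each truncated problem is legitimate through finiteness of the relevant Green potentials---and choosing the right convergence theorem: dominated convergence in (a), where the product $\xi_n\psi(u_n)$ fails to be monotone since $\xi_n$ increases while $\psi(u_n)$ decreases, versus plain monotone convergence in (b). I do not expect a genuine obstacle beyond this.
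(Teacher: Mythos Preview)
Your argument is correct and follows essentially the same route as the paper: monotonicity via~\eqref{compf1f2}, passage to the limit in the integral equation by dominated convergence, and identification of the limit through the uniqueness in Theorem~\ref{thexis} (the paper is terser and simply declares~(b) ``similar''). One small slip: where you write ``$\xi\le\xi_n$'' you mean $\xi\ge\xi_n$, which is what actually yields $U_D^\xi(f,g)\le u_n$ via~\eqref{compf1f2}.
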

	\begin{proof}
		We define $u_n:=U_D^{\xi_n}(f, g)$ for every  $n\geq 1$. Applying~\eqref{compf1f2}, we get that $(u_n)_{n\geq 1}$ is nonincreasing in $D$ and thereby it converges to $u:=\inf_{n\geq 1} u_n$. Furthermore, applying the dominated convergence theorem, we obtain 
		$$
		\lim_{n\to\infty} \int_{D}G_D(\cdot,y)\xi_{n}(y)\psi(u_n(y))\,dy=\int_{D}G_D(\cdot,y)\xi(y)\psi(u(y))\,dy.
		$$
		Consequently, $u+G_D(\xi\psi(u))=s$ in $D$ which yields that $u=U_D^\xi(f,g)$ and statement~(a) is proved.  Statement~(b) is similar.
	\end{proof}

	Under appropriate conditions, the following theorem ensures that  $U_D^\xi(f,g)$ is none other than   the solution to the semilinear boundary value problem~\eqref{base}.
	
	\begin{theorem}\label{theopro}
		Assume that $G_D \xi, G_D g\in\mathcal{C}_{0}(D)$. Then, for every function $f\in\mathcal{C}^+(\partial D)$,  problem~\eqref{base} admits one and only one solution $u\in\mathcal{C}^+(D)$. More precisely  $u=U_D^\xi(f,g)$.  
	\end{theorem}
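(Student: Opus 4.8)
The plan is to identify the candidate solution with $u:=U_D^\xi(f,g)$, check it makes sense, verify it solves \eqref{base}, and then show any solution of \eqref{base} must coincide with it. First I would confirm $U_D^\xi(f,g)$ is well defined. Since $\partial D$ is compact and $f$ is continuous, $H_Df$ is bounded; by hypothesis $G_Dg\in\mathcal{C}_0(D)$ is bounded as well, so $s:=S_D(f,g)=H_Df+G_Dg$ is bounded, say $0\le s\le M$ in $D$. As $\psi$ is nondecreasing, $\xi\psi(s)\le\psi(M)\,\xi$, whence $G_D(\xi\psi(s))\le\psi(M)\,G_D\xi<\infty$ in $D$. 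Since $s$ is moreover continuous and $L$-superharmonic in $D$, Theorem~\ref{thexis} applies and yields the unique $u\in\mathcal{C}^+(D)$ with $u+G_D(\xi\psi(u))=s$, i.e. $u=U_D^\xi(f,g)$.

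For the existence part I would show this $u$ solves \eqref{base}. From $0\le u\le s\le M$ we get that $\xi\psi(u)\le\psi(M)\,\xi$ is locally bounded and $G_D(\xi\psi(u))\le\psi(M)\,G_D\xi<\infty$, so by Theorem~\ref{theoo} the potential $G_D(\xi\psi(u))$ is continuous in $D$, and by \eqref{gdp} we have $L\bigl(G_D(\xi\psi(u))\bigr)=-\xi\psi(u)$ and $L(G_Dg)=-g$ in $D$ in the distributional sense. Since $H_Df$ is $L$-harmonic, applying $L$ to $u=H_Df+G_Dg-G_D(\xi\psi(u))$ gives $-Lu+\xi\psi(u)=g$ in $D$. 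For the boundary condition, regularity of $D$ gives $\lim_{x\to z}H_Df(x)=f(z)$ for each $z\in\partial D$; on the other hand $0\le G_D(\xi\psi(u))\le\psi(M)\,G_D\xi$ with $G_D\xi\in\mathcal{C}_0(D)$ forces, by squeezing, $\lim_{x\to z}G_D(\xi\psi(u))(x)=0$, and likewise $\lim_{x\to z}G_Dg(x)=0$. Hence $\lim_{x\to z}u(x)=f(z)$, so $u=U_D^\xi(f,g)$ solves \eqref{base}.

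For uniqueness, let $v\in\mathcal{C}^+(D)$ be any solution of \eqref{base}. Since $v$ is continuous in $D$, $\lim_{x\to z}v(x)=f(z)$ for all $z\in\partial D$, and $f$ is continuous on $\partial D$, $v$ extends to a continuous function on the compact set $\overline D$, hence is bounded, say $v\le M'$ in $D$. Then $\xi\psi(v)\le\psi(M')\,\xi$ is locally bounded, $G_D(\xi\psi(v))\le\psi(M')\,G_D\xi<\infty$, and as above $G_D(\xi\psi(v))$ is continuous in $D$ with $G_D(\xi\psi(v))\in\mathcal{C}_0(D)$. Put $h:=v+G_D(\xi\psi(v))-G_Dg$. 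Using \eqref{gdp} together with the equation $-Lv+\xi\psi(v)=g$ satisfied by $v$, one computes $Lh=0$ in $D$ in the distributional sense; since $h$ is continuous, $h$ is $L$-harmonic in $D$. Moreover $\lim_{x\to z}h(x)=f(z)+0-0=f(z)$ for every $z\in\partial D$, so $h=H_Df$ by uniqueness of the solution of the Dirichlet problem in the regular domain $D$. Therefore $v+G_D(\xi\psi(v))=H_Df+G_Dg=S_D(f,g)$, i.e. $v$ solves \eqref{s2}, and the uniqueness assertion of Theorem~\ref{thexis} gives $v=U_D^\xi(f,g)=u$.

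The main obstacle I expect is the uniqueness step: one must first secure boundedness of an \emph{arbitrary} continuous solution $v$ (via its continuous extension to $\overline D$) in order to control $\xi\psi(v)$ and $G_D(\xi\psi(v))$ and to apply \eqref{gdp}, and then correctly recognize the combination $h=v+G_D(\xi\psi(v))-G_Dg$ as an honest $L$-harmonic function with the prescribed boundary data. The well-definedness of $U_D^\xi(f,g)$ and the existence direction are comparatively routine once the representation $u=S_D(f,g)-G_D(\xi\psi(u))$, property \eqref{gdp}, and the $\mathcal{C}_0$-estimates are in hand.
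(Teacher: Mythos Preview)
Your proposal is correct and follows essentially the same route as the paper: show that $u=U_D^\xi(f,g)$ solves \eqref{base} by applying \eqref{gdp} to the representation $u=H_Df+G_Dg-G_D(\xi\psi(u))$ and using the $\mathcal{C}_0$-bound $G_D(\xi\psi(u))\le\psi(M)G_D\xi$ for the boundary condition, and conversely recognize $h=v+G_D(\xi\psi(v))-G_Dg$ as an $L$-harmonic function with boundary data $f$, forcing $v$ to satisfy \eqref{s2}. Your version is slightly more explicit than the paper's in two places---you verify the hypotheses of Theorem~\ref{thexis} so that $U_D^\xi(f,g)$ is well defined, and you justify the boundedness of an arbitrary solution $v$ via its continuous extension to $\overline{D}$---but the argument is otherwise the same.
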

	
	\begin{proof}
		It is sufficient to prove that $u$ is a solution to~\eqref{s2} if and only if $u$ is the solution to problem~\eqref{base}. Let $a=\sup_{x\in D}|S_D(f,g)|$ and let $u=U_D^\xi(f,g)$ be the solution to \eqref{s2}. It is easy to see   $0\leq u \leq a$ in $D$ which yields that 
		$
		G_{D}(\xi\psi(u))\leq \psi(a) G_{D}\xi<\infty
		$ and thereby $G_D(\xi\psi(u))\in \mathcal{C}(D)$ and according to  formula~\eqref{gdp},
		$$
		-LG_{D}(\xi\psi(u))=\xi\psi(u) \quad\mbox{in }D.
		$$ 
		We also have $-LG_{D}g=g$ in $D$ since $G_Dg$ is continuous. Therefore, $u\in\mathcal{C}^+(D)$ and
		$$
		-Lu+\xi\psi(u)=g \quad\mbox{in }D.
		$$ On the other hand, for every $z\in\partial D$,
		$
		0\leq \lim_{x\to z}G_D (\xi\psi(u))(x)\leq \psi(a)\lim_{x\to z} G_{D}\xi=0
		$
		and consequently $\lim_{ x\rightarrow z} u(x)=f(z)$ by~\eqref{s2}.
		Thus $u=U_D^\xi(f,g)$ is a  solution to the boundary problem~\eqref{base}.
		Conversely, let $u$ be a solution to~\eqref{base}. Using the fact that $G_D\xi$ and $G_Dg$ are in the class $\mathcal{C}_0(D)$, we easily check out that 
		$$
		h:=u+G_D(\xi\psi(u))-G_Dg
		$$
		is $L-$harmonic in $D$ and $\lim_{x\to z}h(x)=f(z)$ for all $z\in\partial D$. Thus $h=H_Df$ and $u$ satisfies~\eqref{s2}.	
	\end{proof}
	
	In virtue of the previous theorem, we observe that for any function $u\in\mathcal{C}^+(D)$,  the following three statements are equivalent:
	\begin{enumerate}
		\item[i)] The function $u$ is a solution to equation~\eqref{peq} in $D$.
		\item[ii)] $U_V^\xi(u,g)=u$ in $V$ for every regular set $V\Subset D$.
		\item[iii)]	$U_B^\xi(u,g)=u$ in $B$ for every ball $B\Subset D$.
	\end{enumerate}

	\begin{proposition}\label{theo3}  Assume that   $\xi\in\mathcal{C}^\alpha(D)$,  $\psi\in\mathcal{C}^\alpha(\mathbb{R}_+)$ and let $f\in\mathcal{B}(\partial D)$, $g\in\mathcal{C}^\alpha(D)$ be nonnegative functions such that $S_D(f,g)<\infty$ and $G_D\big(\xi\psi(S_D(f,g))\big)<\infty$ in~$D$. Then  $U_D^\xi(f,g)$ is twice continuously  differentiable in~$D$.  
	\end{proposition}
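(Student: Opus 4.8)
The approach is a two-step elliptic bootstrap based on the integral equation defining $u:=U_D^\xi(f,g)$. Write $s:=S_D(f,g)=H_Df+G_Dg$; by Theorem~\ref{thexis} the function $u$ is continuous on $D$, $0\le u\le s$ in $D$, and $u+G_D(\xi\psi(u))=s$ in $D$. Since $\psi$ is nondecreasing, $\xi\psi(u)\le\xi\psi(s)$, so $p:=\xi\psi(u)$ is a nonnegative Borel function which is locally bounded (because $\xi$ is locally bounded, $u$ is continuous, and $\psi$ is continuous) and satisfies $G_Dp\le G_D(\xi\psi(s))<\infty$ in $D$; likewise $G_Dg\le s<\infty$ and $H_Df\le s<\infty$. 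By~\eqref{gdp} we get $LG_Dp=-p$ and $LG_Dg=-g$ in the distributional sense, while $LH_Df=0$ since $H_Df$ is $L$-harmonic (Proposition~\ref{lem01}). Consequently $u=H_Df+G_Dg-G_Dp$ solves $-Lu+\xi\psi(u)=g$ in $D$ in the distributional sense; this is the equation on which interior Schauder estimates will act.

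First I would prove $u\in\mathcal C^\alpha(D)$. The term $H_Df$ is $L$-harmonic, hence in $\mathcal C^{2,\alpha}_{loc}(D)$ by interior Schauder estimates and hypoellipticity; $G_Dg\in\mathcal C^\alpha(D)$ by Theorem~\ref{theoo}, since $g$ is locally bounded and $G_Dg\not\equiv\infty$; and $G_Dp=G_D(\xi\psi(u))\in\mathcal C^\alpha(D)$ by the same theorem, since $\xi\psi(u)$ is locally bounded and $G_D(\xi\psi(u))\not\equiv\infty$. Summing, $u\in\mathcal C^\alpha(D)$.

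Next I would exploit $\psi\in\mathcal C^\alpha(\mathbb R_+)$. Fix $V\Subset D$; the continuous function $u$ maps $\overline V$ into a compact interval of $\mathbb R_+$ on which $\psi$ is $\alpha$-Hölder, and composing with the $\alpha$-Hölder map $u$ gives $\psi\circ u\in\mathcal C^{\alpha^2}(\overline V)$, hence $\psi\circ u\in\mathcal C^{\alpha^2}_{loc}(D)$. Since $\xi,g\in\mathcal C^\alpha(D)\subset\mathcal C^{\alpha^2}_{loc}(D)$, the right-hand side $\xi\psi(u)-g$ of $Lu=\xi\psi(u)-g$ lies in $\mathcal C^{\alpha^2}_{loc}(D)$. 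Interior Schauder estimates for $L$ (whose coefficients are a fortiori locally $\mathcal C^{\alpha^2}$; see~\cite{gilbarg1998elliptic}) then give $u\in\mathcal C^{2,\alpha^2}_{loc}(D)$, and in particular $u\in\mathcal C^2(D)$, which is the assertion. For the sharper statement $u\in\mathcal C^{2,\alpha}_{loc}(D)$, one more pass works: $u\in\mathcal C^{2,\alpha^2}_{loc}(D)$ is locally Lipschitz, so $\psi\circ u\in\mathcal C^\alpha_{loc}(D)$, whence $Lu\in\mathcal C^\alpha_{loc}(D)$ and Schauder upgrades $u$ to $\mathcal C^{2,\alpha}_{loc}(D)$.

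The only delicate point is the loss of Hölder exponent through the nonlinearity: as $\psi$ is assumed merely Hölder rather than locally Lipschitz, $\mathcal C^\alpha$ data cannot be passed through $\psi$ in a single step, and one lands in $\mathcal C^{2,\alpha^2}_{loc}(D)$ on the first bootstrap — which already contains $\mathcal C^2(D)$ and so suffices, the optimal exponent being recovered by the extra iteration. All the remaining ingredients (the distributional identity for $u$, the local boundedness of $\xi\psi(u)$, and the Hölder regularity of $G_Dg$ and $G_D(\xi\psi(u))$) are immediate from Theorem~\ref{theoo} and~\eqref{gdp}.
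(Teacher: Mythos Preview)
Your proof is correct and follows the same two-step bootstrap as the paper: first establish $u\in\mathcal C^\alpha(D)$, then use $\xi\psi(u)\in\mathcal C^{\alpha^2}_{\mathrm{loc}}(D)$ to reach $u\in\mathcal C^2(D)$. The only real difference is in how the second step is executed. The paper localizes to a ball $B\Subset D$, writes $u=H_Bu+G_Bg-G_B(\xi\psi(u))$ via~\eqref{ggg}, and applies Proposition~\ref{pror}(b) directly (bounded $\mathcal C^{\alpha^2}$ data on the ball yields $G_B(\xi\psi(u))\in\mathcal C^2(B)$); you instead keep the global decomposition and invoke interior Schauder regularity on $Lu=\xi\psi(u)-g$. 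Your route is legitimate but slightly heavier, since applying Schauder to a merely continuous distributional solution tacitly relies on the same potential-theoretic fact that Proposition~\ref{pror}(b) isolates---the paper's argument stays self-contained within its own toolbox. Your extra iteration recovering the sharp exponent $u\in\mathcal C^{2,\alpha}_{\mathrm{loc}}(D)$ is a nice addendum the paper does not record.
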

	
	\begin{proof} Consider a ball    $B\Subset D$. 	 Notice that $u=U_D^\xi(f,g)$ is continuous in~$D$ and, in virtue of~\eqref{s2} and~\eqref{ggg}, we have 
		$$
		u=H_{B}u+G_{B}g-G_{B}\big(\xi \psi(u)\big)\quad \mbox{in }B.
		$$ 
		We first  observe that $H_{B}u\in \mathcal{C}^{2}(B)$ by the hypoellipticity property of~$L$ and $G_Bg\in \mathcal{C}^{2}(B)$ in view of~(b) in Proposition~\ref{pror}. Next, since $p=\xi\psi(u)$ is bounded in~$B$,  statement~(a) in Proposition~\ref{pror} yields that  $G_Bp\in\mathcal{C}^\alpha(B)$ and consequently $u\in \mathcal{C}^\alpha(B)$. By simple computation, we check out that  $p\in \mathcal{C}^{\alpha^2}(B)$ and thereby $G_Bp\in \mathcal{C}^{2}(B)$ by Proposition~\ref{pror}(b). Whence $u\in \mathcal{C}^2(B)$ for any arbitrary ball $B\Subset D$ which means that $u\in\mathcal{C}^2(D)$. 
	\end{proof}
	
	Let us note  that all  results obtained  in this section can easily be extended to a more general setting, where we consider instead of  functions $\xi(x)$ and $\psi(t)$, a function 
	$$
	\Phi: (x,t)\in  D\times \mathbb{R}_{+} \mapsto \Phi(x,t)\in \mathbb{R}
	$$ 
	satisfying some hypotheses analogous to those assumed on  $\xi$ and $\psi$. Besides, it should be specified that the fact that $\xi$ and $g$ are locally bounded is not really necessary in our approach. In fact, it is enough to assume that $\xi$ and $g$ belong to the local Kato class \cite{chung1995brownian}.
	
	\section{Estimates of solutions}

	This section is devoted to establish our main results which consist in  lower bound estimates of  nonnegative  solutions to equation~\eqref{peq} and more general to inequality 
	\begin{equation}\label{sup-sol0}
		-Lu+\xi\psi(u) \geq  g \mbox{ in }D.
	\end{equation} 
	More precisely, under appropriate conditions on $\psi$, we will determine a function $\varphi$ with values in  $]0,1]$  such that 	for every   $f\in\mathcal{C}^+(\partial D)$, $g\in\mathcal{B}^+(D)$ and for all $x\in D$,	
	\begin{equation}\label{eqcp}
		U_D^\xi(f,g)(x)\geq	s(x)\varphi\left(\frac{G_D(\xi\psi(s))(x)}{s(x)}\right).
	\end{equation}
	The function  $g$ is assumed to be locally bounded  in~$D$ such that 	$G_Dg\in\mathcal{C}_0(D)$ and the $L-$superharmonic function 
	$s=S_D(f,g)>0$ in $D$.
	
	We are only interested  in locally bounded functions~$\xi\in\mathcal{B}(D)$ which are nonnegative. Analogous problems  with $\xi\leq 0$  are recently investigated by  Grigor'yan and Verbitsky in~\cite{grigor2020}. They gave a real positive function $\varrho$, depending only on~$\psi$, such that if  $S_D(f,g)\geq 1$ in $D$, then the  solution $u$ to problem~\eqref{base}  satisfies for all $x\in D$,
	\begin{equation}\label{urho}
		u(x)\geq \varrho\left(G_D\xi \right(x)).
	\end{equation}
	The particular case where $L=\Delta$ is the Laplacian operator and $f,g$ are identically zero, is already studied by  M\^aagli and Zribi \cite{zribi2001}.
	For signed function $\xi$,   problem~\eqref{base} is  examined  by  Grigor'yan and Verbitsky  \cite{grigor2019pointwise} in the special case where $\psi(t)=t^\gamma$ with $\gamma>0$.  Assuming that $p=G_Dg>0$ in $D$, they showed that  if  $u\in \mathcal{C}^2(D)$  is a nonnegative solution to 
	\begin{equation}\label{sup-sol-g}
		-Lu+\xi u^\gamma \geq  g \mbox{ in }D,
	\end{equation}     
	then 	 for every $x\in D$,
	\begin{equation}\label{estgr}
		u(x)\geq \left\{\begin{array}{ll}
			p(x)\left(1+(\gamma-1)\frac{G_D\left(p^\gamma \xi\right)(x)}{p(x)}\right)_+^{1/(1-\gamma)} & \mbox{if }\gamma<1,
			\\ p(x)\exp\left({-\frac{G_D(\xi p)(x)}{p(x)}}\right) & \mbox{if }\gamma=1, 
			\\ p(x)\left(1+(\gamma-1)\frac{G_D(p^\gamma \xi)(x)}{p(x)}\right)^{1/(1-\gamma)} & \mbox{if }\gamma>1.
		\end{array}\right.
	\end{equation}	
	
	In this paper, we establish similar estimates for a large class of functions $\psi$ which contains   polynomial functions, logarithmic functions as well as  exponential functions. More precisely, we consider a continuous function  $\psi:\mathbb{R}_{+}\to \mathbb{R}_{+}$ such that  $\psi(0)=0$,  $\psi(t)>0$ for all $t>0$, $\psi$ is continuously differentiable in $ ]0,\infty[$ and 
	\begin{equation}\label{hypcom}
		\psi(rt)\leq c\psi(r)\psi(t)\quad \mbox{for all }r\in[0,1], t\in\mathbb{R}_{+} 
	\end{equation}
	where $c$ is a positive constant not depending on either $s$ or $t$. Let $0<\ell\leq \infty$ given by 
	\begin{equation*}
		\ell c=\int_{0}^{1}\frac{1}{\psi(s)}\,ds
	\end{equation*}
	and define the function  $\Theta:]0,1]\rightarrow [0, \ell[$  for every $t\in ]0,1]$ by 
	$$
	\Theta(t)=\int_{t}^{1}\frac{1}{c\psi(s)}ds.
	$$
	Then, obviously   $\Theta$ admits an inverse function which will be denoted by $\varphi$ and extended  to~$\mathbb{R}_+$ by setting $\varphi(t)=0$ if $t\geq \ell$. Hence, $\varphi$ is continuous and nonincreasing in  $\mathbb{R}_+$, twice continuously  differentiable in $]0,\ell[$  and for every $r\in ]0,\ell[$, 
	$$
	\varphi'(r)=-c\psi(\varphi(r))<0\quad\mbox{ and }\quad \varphi''(r)=c^2\psi'(\varphi (r))\psi(\varphi (r))> 0.
	$$ 
	
	In order to prove Theorem~\ref{thp} below, we first establish some technical lemmas. 
	\begin{lemma}\label{lem1}
		Let $u:D\rightarrow \mathbb{R}_+$ and $v:D\rightarrow ]0,\ell[$ be  twice continuously differentiable  functions. If $u$ is $L-$superharmonic in $D$, then
		$$
		L(uv)\geq 	\frac{Lu- L(u\varphi(v))}{c\psi(\varphi(v))}.
		$$
	\end{lemma}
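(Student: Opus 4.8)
The plan is to expand $L(uv)$ using the product rule for the second-order operator $L$ and compare it term by term with the corresponding expansion of $L(u\varphi(v))$. Recall that for $L = \sum a_{ij}\partial_{ij} + \sum b_i\partial_i$ one has the Leibniz-type identity
\[
L(uw) = u\,Lw + w\,Lu + 2\sum_{i,j=1}^{d}a_{ij}\,\partial_i u\,\partial_j w
\]
for any $C^2$ functions $u,w$; here the cross term is symmetric in $i,j$ because $a_{ij}$ may be taken symmetric. I would apply this twice: once with $w=v$ and once with $w=\varphi(v)$. For the second one I also need the chain rule, $\partial_i(\varphi(v)) = \varphi'(v)\,\partial_i v$ and $\partial_{ij}(\varphi(v)) = \varphi'(v)\,\partial_{ij}v + \varphi''(v)\,\partial_i v\,\partial_j v$, hence
\[
L(\varphi(v)) = \varphi'(v)\,Lv + \varphi''(v)\sum_{i,j}a_{ij}\,\partial_i v\,\partial_j v .
\]

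**Core algebraic step.** Plugging in $\varphi'(v) = -c\psi(\varphi(v))$ and $\varphi''(v) = c^2\psi'(\varphi(v))\psi(\varphi(v))$, I would write out $L(u\varphi(v))$ fully and then form the combination $Lu - L(u\varphi(v))$ that appears on the right-hand side of the claimed inequality. Dividing by $c\psi(\varphi(v))>0$, the plan is to show that
\[
\frac{Lu - L(u\varphi(v))}{c\psi(\varphi(v))} = L(uv) - R,
\]
where $R$ collects all the genuinely quadratic-in-$\nabla v$ leftovers, and then to argue $R\ge 0$. Specifically, after cancellation the terms linear in $Lv$ and the terms $u\,Lv$, $v\,Lu$ versus the $\varphi$-analogues should recombine into $L(uv)$ up to the residual
\[
R = \Big(c\,\psi'(\varphi(v)) + \frac{1}{\psi(\varphi(v))}\Big)\,u\sum_{i,j}a_{ij}\,\partial_i v\,\partial_j v \;+\;(\text{a multiple of }\sum_{i,j}a_{ij}\,\partial_i u\,\partial_j v),
\]
and the key point will be that the first block is $\ge 0$ by uniform ellipticity together with $u\ge 0$, $\psi>0$, $\psi'\ge 0$ on $]0,\infty[$ (monotonicity of $\psi$). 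The hypothesis that $u$ is $L$-superharmonic, i.e. $Lu\le 0$, must be used exactly where a term of the form $(\text{something}\le 0)\cdot Lu$ or where the sign of the cross term $\sum a_{ij}\partial_i u\,\partial_j v$ cannot be controlled; I expect one needs to absorb the mixed gradient term by writing the whole quadratic form in the pair $(\partial u,\partial v)$ and checking it is positive semidefinite, or alternatively that the offending term is proportional to $v\cdot Lu\le 0$.

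**Anticipated main obstacle.** The delicate part is handling the cross term $\sum_{i,j}a_{ij}\,\partial_i u\,\partial_j v$: it has no definite sign, so it cannot simply be dropped. I would resolve this by checking that in the difference $L(uv) - \frac{Lu - L(u\varphi(v))}{c\psi(\varphi(v))}$ the cross terms cancel \emph{identically} — this is plausible because $uv$ and $u\varphi(v)$ both contain exactly one factor $u$, so the $\partial u$-$\partial v$ interaction in $L(uv)$ should be matched, after division by $c\psi(\varphi(v))$ and using $\varphi'(v)=-c\psi(\varphi(v))$, by the corresponding interaction in $L(u\varphi(v))$. If that exact cancellation holds, then only the purely quadratic form $u\sum a_{ij}\partial_i v\,\partial_j v$ multiplied by the manifestly nonnegative scalar $c\psi'(\varphi(v)) + 1/\psi(\varphi(v))$ survives, and uniform ellipticity finishes the proof. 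So the real work is the bookkeeping that produces that cancellation; everything after it is immediate from $\psi,\psi'\ge0$, $u\ge0$, and ellipticity. I would present the computation compactly, introducing the shorthand $w=\varphi(v)$ and the bilinear form $\Gamma(f,h)=\sum_{i,j}a_{ij}\partial_i f\,\partial_j h$ to keep the identities readable.
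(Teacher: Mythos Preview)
Your overall strategy matches the paper's exactly: expand $L(u\varphi(v))$ via the product and chain rules, compare with $L(uv)$, and show the residual is nonnegative. Your intuition that the mixed gradient terms $\sum a_{ij}\partial_i u\,\partial_j v$ cancel \emph{identically} is also correct --- this is precisely what happens in the paper, because $\nabla(\varphi(v))=\varphi'(v)\nabla v$ and the factor $\varphi'(v)=-c\psi(\varphi(v))$ is exactly what you divide by.

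The gap is in your description of what survives after that cancellation. Writing $\Gamma(v,v)=\sum a_{ij}\partial_i v\,\partial_j v$, the exact identity one obtains (this is the paper's computation) is
\[
L(u\varphi(v))=\varphi(v)\,Lu-\varphi'(v)\,v\,Lu+\varphi''(v)\,u\,\Gamma(v,v)+\varphi'(v)\,L(uv),
\]
and hence
\[
L(uv)-\frac{Lu-L(u\varphi(v))}{c\psi(\varphi(v))}
=\underbrace{c\,\psi'(\varphi(v))\,u\,\Gamma(v,v)}_{\ge 0}
\;+\;\frac{\big(\varphi(v)-1-v\varphi'(v)\big)\,Lu}{c\psi(\varphi(v))}.
\]
So the residual is \emph{not} just the quadratic form in $\nabla v$ (and your coefficient $c\psi'(\varphi(v))+1/\psi(\varphi(v))$ is incorrect; it is simply $c\psi'(\varphi(v))$). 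There is an additional zero-order term carrying $Lu$, and this is precisely where the superharmonicity hypothesis is used --- but only half of the argument. One must also show that the scalar coefficient $\varphi(v)-1-v\varphi'(v)$ is $\le 0$. The paper gets this from the convexity of $\varphi$ together with $\varphi(0)=1$: the tangent line to $\varphi$ at $v$ lies below the graph, so $\varphi(0)\ge \varphi(v)-v\varphi'(v)$, i.e.\ $\varphi(v)-v\varphi'(v)-1\le 0$. Combined with $Lu\le 0$, the second term is then $\ge 0$.

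Your sketch never isolates the coefficient $\varphi(v)-1-v\varphi'(v)$ and never invokes convexity of $\varphi$ (equivalently $\varphi''\ge 0$) for this purpose; your concluding paragraph explicitly asserts that after cancellation ``only the purely quadratic form \dots\ survives, and uniform ellipticity finishes the proof,'' which is false. That missing convexity/tangent-line step is the real content of the lemma beyond bookkeeping, and without it the argument does not close.
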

	\begin{proof} Obviously  $w=\vphi(v)\in\mathcal{C}^2(D)$ and by elementary  computation, we get that  
		$$
		L w=\varphi'(v)Lv+\varphi''(v)<A\nabla v,\nabla v>.
		$$
		Consequently 
		\begin{eqnarray*}
			L(uw)&=&uLw+wLu+<\nabla u, A\nabla w>+<A\nabla u,\nabla w>\\
			&=&\varphi'(v)uLv+\varphi''(v)u<A\nabla v,\nabla v>+\varphi(v)Lu\\
			&&\qquad+\varphi'(v)<\nabla u, A\nabla v>+\varphi'(v)<A\nabla u, \nabla v>\\
			&=&\varphi(v)Lu-\varphi'(v) vLu+ \varphi''(v)u<A\nabla v,\nabla v>\\
			&&\qquad +\varphi'(v)\left( uLv+vLu+ <\nabla u, A\nabla v>+<A\nabla u, \nabla v>\right)\\
			&=&\varphi(v)Lu-\varphi'(v) vLu+ \varphi''(v)u<A\nabla v,\nabla v>+\varphi'(v) L(uv)
		\end{eqnarray*}
		Since the matrix $A$ is positive-definite and the function $\varphi$ is convex, it follows that 
		\begin{eqnarray*}
			&&	L(u\varphi(v))-\left(\varphi(v)-v\varphi'(v)\right)Lu-\varphi'(v)L(uv) \\
			&& \qquad\qquad\qquad = \quad \varphi''(v)u<A\nabla v,\nabla v>\\
			&& \qquad\qquad\qquad \geq \quad 0.	
		\end{eqnarray*}
		Hence
		\begin{eqnarray*}
			-\varphi'(v)L(uv)  & \geq &  -L(u\varphi(v))+\left(\varphi(v)-v\varphi'(v)\right)Lu
			\\ & = &
			-L(u\varphi(v))+\left(\varphi(v)-1-v\varphi'(v)\right)Lu+Lu
		\end{eqnarray*}
		In virtue of the mean value theorem and the fact that $\varphi'$ is nondecreasing, we obtain that
		$
		\varphi(v)-1-v\varphi'(v)\leq 0
		$ in $D$. 
		Assume that $u$ is $L-$superharmonic in $D$. Then we deduce from  \cite[Theorem~3.5]{gilbarg1998elliptic} that $Lu\leq 0$ in $D$ and consequently  $\left(\varphi(v)-1-v\varphi'(v)\right)Lu\geq 0$ in $D$.
		Finally, using the fact that $\varphi'(v)=-c\psi(\varphi(v))\leq 0 $, we get that
		$$
		L(uv)\geq 	\frac{Lu- L(u\varphi(v))}{c\psi(\varphi(v))}.
		$$
	\end{proof}
	
	\begin{lemma}\label{lem2}
		Let $f\in\mathcal{B}_b^+(\partial D)$ and $g\in\mathcal{B}_b^+(D)$ such that $s=S_D(f,g)>0$ in $D$. Assume that  $\xi$ is bounded and let  $(\xi_n)_{n\geq 1}$, $(g_n)_{n\geq 1}\subset \mathcal{B}^{+}(D)$ and $(f_n)_{n\geq 1}\in\mathcal{B}^+(\partial D)$  be bounded sequences which are pointwise convergent respectively to $\xi$, $g$ and $f$. Assume that for all $n\geq 1$, $s_n=S_D(f_n,g_n)>0$ and 
		\begin{equation*}
			U_D^{\xi_n}(f_n,g_n)\geq s_n\varphi\left(\frac{G_D(\xi_n\psi(s_n))}{s_n}\right)\quad\mbox{in }D.
		\end{equation*}	
		Then  \eqref{eqcp}  holds true.
	\end{lemma}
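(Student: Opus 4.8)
The plan is to obtain \eqref{eqcp} by passing to the limit along the approximating sequences. Write $u_n=U_D^{\xi_n}(f_n,g_n)$ and $u=U_D^\xi(f,g)$; the latter is well defined because $f,g,\xi$ bounded forces $s=S_D(f,g)$ to be finite, continuous and $L$-superharmonic (Proposition~\ref{lem01}, Theorem~\ref{theoo}) and $G_D(\xi\psi(s))\le\psi(a)\,G_D\xi<\infty$. Here $M$ denotes a common bound for $\xi$ and all the $\xi_n$, and $a$ a common bound for $s$ and all the $s_n=S_D(f_n,g_n)$, obtained from $\|H_Dh\|_\infty\le\|h\|_\infty$ together with the boundedness of $G_D1$ on the bounded set $D$. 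Since $u_n+G_D(\xi_n\psi(u_n))=s_n$ and $u_n\ge 0$ by Proposition~\ref{lemcom}, we have $0\le u_n\le s_n\le a$, so the functions $\xi_n\psi(u_n)$ and $\xi_n\psi(s_n)$ are all bounded by $\psi(a)M$.

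First I would handle the right-hand side. Dominated convergence with respect to the harmonic measure $H_D(x,\cdot)$ and to the finite measure $G_D(x,\cdot)\,dy$ gives $H_Df_n(x)\to H_Df(x)$ and $G_Dg_n(x)\to G_Dg(x)$, hence $s_n\to s$ pointwise; the continuity of $\psi$ then yields $\xi_n\psi(s_n)\to\xi\psi(s)$ pointwise, and, these functions being uniformly bounded, $G_D(\xi_n\psi(s_n))\to G_D(\xi\psi(s))$ pointwise by dominated convergence once more. Since $s>0$ in $D$ and $\varphi$ is continuous on $\mathbb{R}_+$, the right-hand side $s_n\varphi\bigl(G_D(\xi_n\psi(s_n))/s_n\bigr)$ converges pointwise to $s\varphi\bigl(G_D(\xi\psi(s))/s\bigr)$.

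The heart of the proof is the pointwise convergence $u_n\to u$; the sequences are not assumed monotone, so Proposition~\ref{copro} does not apply and I would argue by compactness instead. As $(\xi_n\psi(u_n))_{n\geq1}$ is bounded in $\mathcal{B}_b(D)$ and $G_D$ is a compact operator on $\bigl(\mathcal{B}_b(D),\|\cdot\|_\infty\bigr)$ (Proposition~\ref{pro2}), every subsequence of $(u_n)$ contains a further subsequence $(u_{n_k})$ along which $G_D(\xi_{n_k}\psi(u_{n_k}))$ converges uniformly, say to $w$. Then $u_{n_k}=s_{n_k}-G_D(\xi_{n_k}\psi(u_{n_k}))$ converges pointwise to $\tilde u:=s-w$, which is nonnegative and continuous (as $s$ and each $G_D(\xi_{n_k}\psi(u_{n_k}))$ are continuous by Theorem~\ref{theoo}, and uniform convergence preserves continuity). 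Moreover $\xi_{n_k}\psi(u_{n_k})\to\xi\psi(\tilde u)$ pointwise and boundedly, so $G_D(\xi_{n_k}\psi(u_{n_k}))\to G_D(\xi\psi(\tilde u))$ pointwise; comparing this with the uniform limit $w$ forces $w=G_D(\xi\psi(\tilde u))$, hence $\tilde u+G_D(\xi\psi(\tilde u))=s$. By the uniqueness statement in Theorem~\ref{thexis} we conclude $\tilde u=u$. Thus every subsequence of $(u_n)$ has a further subsequence converging pointwise to the same limit $u$, so $u_n\to u$ pointwise on $D$.

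Finally I would let $n\to\infty$ in the assumed inequality $u_n\ge s_n\varphi\bigl(G_D(\xi_n\psi(s_n))/s_n\bigr)$: the left-hand side tends to $u=U_D^\xi(f,g)$ and the right-hand side to $s\varphi\bigl(G_D(\xi\psi(s))/s\bigr)$, which is precisely \eqref{eqcp}. The only genuine difficulty is the pointwise convergence of $(u_n)$, and it is resolved by the compactness of $G_D$ together with the uniqueness of the solution of the integral equation; every remaining limit is a routine dominated-convergence argument once the uniform bounds $M$ and $a$ have been fixed.
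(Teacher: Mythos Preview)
Your proof is correct and follows essentially the same strategy as the paper: use the compactness of $G_D$ on $\mathcal{B}_b(D)$ (Proposition~\ref{pro2}) to extract a uniformly convergent subsequence of $G_D(\xi_{n}\psi(u_{n}))$, identify the limit of $u_{n_k}$ as $U_D^\xi(f,g)$ via the uniqueness in Theorem~\ref{thexis}, and pass to the limit in the assumed inequality using dominated convergence and the continuity of~$\varphi$. The only minor difference is that the paper extracts \emph{one} convergent subsequence and passes to the limit along it---which already yields \eqref{eqcp} since the limit is identified as $U_D^\xi(f,g)$---whereas you run the sub-subsequence argument to upgrade this to convergence of the full sequence $(u_n)$; this is slightly more than needed but perfectly fine.
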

	\begin{proof} We have $u_n+G_Dp_n=s_n$ where  $u_n=U_D^{\xi_n}(f_n, g_n)$  and $p_n=\xi_n\psi(u_n)$  for all $n\geq 1$. Since $(p_n)_{n\geq 1}$ is a bounded  sequence, it follows from Proposition~\ref{pro2} that there exists a subsequence $(p_{n_k})_{k\geq 1}$ such that 		$(G_Dp_{n_k})_{k\geq 1}$  is uniformly convergent in $D$. Seeing that  $(s_{n_k})_{k\geq 1}$ is pointwise convergent to $s$, we conclude that $(u_{n_k})_{k\geq 1}$ converges in $D$ to a   function $u\in\mathcal{B}_b^+(D)$.  Applying the dominated convergence theorem, we obtain that for every $x\in D$,
		$$
		\lim_{k\to\infty}G_Dp_{n_k}(x)=G_D(\xi\psi(u))(x).
		$$
		Therefore  $ u+G_D(\xi\psi(u))=s$ in $D$, which yields in view of Theorem~\ref{thexis} that $u=U_D^{\xi}(f,g)$.   Again, since sequences $(\xi_n)_{n\geq 1}$ and  $(s_n)_{n\geq 1}$ are bounded, we see that  $(G_D(\xi_{n_k}\psi(s_{n_k})))_{k\geq 1}$  is pointwise convergent to $G_D(\xi\psi(s))$. Hence, letting $k$ tend to infinity in the formula
		$$
		u_{n_{k}}\geq 	s_{n_k}\varphi\left(\frac{G_D(\xi_{n_k}\psi(s_{n_k}))}{s_{n_k}}\right) \quad\mbox{in }D,
		$$
		we obtain \eqref{eqcp} in virtue of the continuity of $\varphi$.
	\end{proof}
	
	\begin{theorem}\label{thp} 	
		Let $g\in\mathcal{B}^+(D)$ be locally bounded and  let $f\in\mathcal{B}^+(\partial D)$  such that $0<s<\infty$ and $G_D(\xi\psi(s))<\infty$ in $D$ where   
		$s=S_D(f,g)$. 
		Then 
		\begin{equation*}
			s\,\varphi\left(\frac{G_{D}(\xi\psi(s))}{s}\right)\leq U_D^\xi(f,g) \leq s \quad \mbox{ in } D. 
		\end{equation*}
	\end{theorem}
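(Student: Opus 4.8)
The plan is to get the upper bound for free and to obtain the lower bound by applying Lemma~\ref{lem1} to the pair $\bigl(s,\ G_D(\xi\psi(s))/s\bigr)$, after reducing to a setting where these functions are of class $\mathcal C^2$. Writing $u=U_D^\xi(f,g)$, the defining integral equation $u+G_D(\xi\psi(u))=s$ together with $G_D(\xi\psi(u))\ge 0$ gives $u\le s$ at once. For the lower bound I would reduce, by two applications of Lemma~\ref{lem2}: first approximate $f,g,\xi$ by bounded functions (here one uses the hypothesis $G_D(\xi\psi(s))<\infty$ to control the Green potentials $G_D(\xi_n\psi(s_n))$), which reduces to the case $f,g,\xi$ bounded; then, keeping $\psi$ and hence $\varphi$ fixed, approximate the bounded Borel functions $g$ and $\xi$ by locally Hölder continuous ones, which reduces to $g,\xi\in\mathcal C^\alpha(D)$. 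In that case $s=H_Df+G_Dg\in\mathcal C^2(D)$ by hypoellipticity of $L$ and Proposition~\ref{pror}(b), $s$ is bounded with $s>0$, and since $\psi\in\mathcal C^1(]0,\infty[)$ is Lipschitz on compacts of $]0,\infty[$ the function $\xi\psi(s)$ is locally Hölder, so $p:=G_D(\xi\psi(s))\in\mathcal C^2(D)\cap\mathcal C_0(D)$ by Proposition~\ref{pror}. If $\xi\equiv 0$ then $u=s=s\,\varphi(0)$ and we are done; otherwise $p>0$ in $D$ by the minimum principle for $L$-superharmonic functions (Theorem~\ref{theoo}, Proposition~\ref{prinmin}(a)), so $\tilde v:=p/s\in\mathcal C^2(D)$ is valued in $]0,\infty[$.

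The key step is then to apply Lemma~\ref{lem1} on the open set $\Omega_0:=\{\tilde v<\ell\}$, with the $L$-superharmonic function $s$ in the first slot and $v=\tilde v$ in the second (on $\Omega_0$ it is $\mathcal C^2$ and valued in $]0,\ell[$). The crucial observation is that $s\,\tilde v=p=G_D(\xi\psi(s))$, so $L(s\tilde v)=-\xi\psi(s)$ by \eqref{gdp}, while $Ls=-g$. Feeding this into Lemma~\ref{lem1} and using $\psi(\varphi(\tilde v))>0$ on $\Omega_0$ gives, in $\Omega_0$,
\begin{equation*}
L\bigl(s\,\varphi(\tilde v)\bigr)\ \ge\ -g+c\,\psi\bigl(\varphi(\tilde v)\bigr)\,\xi\,\psi(s)\ \ge\ -g+\xi\,\psi\bigl(s\,\varphi(\tilde v)\bigr),
\end{equation*}
the last inequality being \eqref{hypcom} with $r=\varphi(\tilde v)\in[0,1]$ and $t=s$. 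Put $w:=s\,\varphi(\tilde v)$ on all of $D$: then $w$ is continuous, $0\le w\le s$, $w\equiv 0$ on the closed set $D\setminus\Omega_0$, and the display reads $-Lw+\xi\psi(w)\le g$ in $\Omega_0$; on $\mathrm{int}(D\setminus\Omega_0)$ this is trivial since $w\equiv 0\le g$, and a standard pasting argument extends it to all of $D$ in the distributional sense, which is legitimate here because $w$ and $\nabla w$ vanish on $\{\tilde v=\ell\}$ (indeed $\varphi(\ell)=0$ and $\varphi'(\ell)=-c\,\psi(\varphi(\ell))=0$).

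It remains to convert the subsolution inequality for $w$ into the comparison $u\ge w$. Set $\sigma:=s-w-G_D(\xi\psi(w))$; since $G_D(\xi\psi(w))\le G_D(\xi\psi(s))<\infty$ it is continuous, and $L\sigma=Ls-Lw+\xi\psi(w)=-g+\bigl(-Lw+\xi\psi(w)\bigr)\le 0$ in $D$ (using \eqref{gdp} for $G_D(\xi\psi(w))$), whence $\sigma$ is $L$-superharmonic in $D$. Moreover $w\le s$ and $0\le G_D(\xi\psi(w))\le G_D(\xi\psi(s))\in\mathcal C_0(D)$ force $\liminf_{x\to z}\sigma(x)\ge 0$ for every $z\in\partial D$, so $\sigma\ge 0$ in $D$ by Proposition~\ref{prinmin}(b). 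Finally, because $u+G_D(\xi\psi(u))=s$ we may rewrite $\sigma=u-w+G_D(\xi\psi(u))-G_D(\xi\psi(w))$, and this being nonnegative and $L$-superharmonic, Proposition~\ref{lemcom} (with $\Psi=\psi$) yields $u\ge w=s\,\varphi\bigl(G_D(\xi\psi(s))/s\bigr)$ in $D$. I expect the main obstacle to be engineering the reduction to the $\mathcal C^2$ setting so that Lemma~\ref{lem1} is literally applicable — pleasantly, only $g$ and $\xi$, not $\psi$, need to be regularized, and one must still check that Lemma~\ref{lem2} transfers the proven estimate back; the secondary, more routine but slightly delicate point is the distributional pasting of the inequality for $w$ across the degeneracy set $\{\varphi(\tilde v)=0\}$.
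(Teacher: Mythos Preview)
Your route is genuinely different from the paper's, and the difference lies precisely in how Lemma~\ref{lem1} is used. The paper feeds the lemma the pair $\bigl(s+\varepsilon,\ v_\varepsilon\bigr)$ with $v_\varepsilon=\Theta\bigl(\tfrac{u+\varepsilon}{s+\varepsilon}\bigr)$, so that $(s+\varepsilon)\varphi(v_\varepsilon)=u+\varepsilon$ is the solution itself; the output is $L\bigl((s+\varepsilon)v_\varepsilon\bigr)\ge -\xi\psi(s)$, whence $(s+\varepsilon)v_\varepsilon-G_D(\xi\psi(s))$ is $L$-subharmonic with zero boundary data, and the estimate follows from the maximum principle after letting $\varepsilon\to 0$. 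You instead feed the lemma $\bigl(s,\ \tilde v\bigr)$ with $\tilde v=p/s$, so that $s\tilde v=p=G_D(\xi\psi(s))$ is explicitly known; the output is that $w=s\varphi(\tilde v)$ is a subsolution, and you then compare $w$ with $u$ via Proposition~\ref{lemcom}. A bonus of your choice is that $u$ never needs to be $\mathcal C^2$, so you bypass Proposition~\ref{theo3} and its hypothesis $\psi\in\mathcal C^\alpha(\mathbb R_+)$.

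The price you pay is exactly the pasting step, and it is more than ``routine.'' The paper's $\varepsilon$-shift forces $\tfrac{u+\varepsilon}{s+\varepsilon}\in\,]0,1[$ (since $u<s$ when $\xi\not\equiv 0$), hence $v_\varepsilon$ takes values strictly in $]0,\ell[$ on all of $D$, and Lemma~\ref{lem1} applies globally with no boundary to cross. Your $\tilde v$ may exceed $\ell$ when $\ell<\infty$; you correctly observe $\varphi(\ell)=\varphi'(\ell)=0$, so $w\in\mathcal C^1(D)$, but $\varphi''(r)=c^2\psi'(\varphi(r))\psi(\varphi(r))$ can blow up as $r\to\ell^-$ (take $\psi(t)=t^\gamma$ with $0<\gamma<\tfrac12$), so $w$ need not be $\mathcal C^2$, and the level set $\{\tilde v=\ell\}$ need not be a hypersurface. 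Thus the claim that $-Lw+\xi\psi(w)\le g$ holds distributionally across $\partial\Omega_0$ is not automatic; one must exploit something like $\varphi'$ being monotone with $\int_0^\ell\varphi''=c\psi(1)<\infty$, or approximate $\ell$ by Sard-regular levels and control boundary terms. This is doable but is the crux, not a side remark---and it is precisely what the paper's $\varepsilon$-device is designed to avoid.

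One small correction on the reductions: Lemma~\ref{lem2} already assumes $f,g,\xi$ bounded, so it only handles your second reduction (bounded Borel to H\"older). The first reduction, from locally bounded to bounded, needs the monotone-truncation argument via Proposition~\ref{copro} (the paper's Step~3), not Lemma~\ref{lem2}.
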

	\begin{proof} Seeing that $u\leq s$ is immediate with $u= U_D^\xi(f,g)$, we only need to prove that 
		\begin{equation}\label{estf}
			\frac{u}{s}\geq \varphi\left(\frac{G_D(\xi\psi(s))}{s}\right)\quad \mbox{in } D.
		\end{equation}
		
		{\em Step 1.} We first assume that $\xi$, $g$ are bounded and $\alpha-$H\"older continuous in $D$ and that~$f$ is bounded in~$\partial D$. Then 
		$s=H_Df +G_D g\in\mathcal{C}^2(D)$ since  $H_Df\in\mathcal{C}^2(D)$ by the  hypoellipticity property of~$L$ and  $G_Dg \in\mathcal{C}^2(D)$ in view of Proposition~\ref{pror}.(b).  Besides, $u\in\mathcal{C}^2(D)$  in virtue of  Proposition~\ref{theo3} and the fact that $\xi$ and $\psi$ are $\alpha-$H\"older continuous.
		Therefore, for every $\ve>0$, $v_{\ve}=\Theta\left(\frac{u+\varepsilon}{s+\varepsilon}\right)$ is twice  continuously differentiable in $D$ with values in $]0,\ell[$ and consequently using  Lemma~\ref{lem1} we obtain that
		\begin{eqnarray*}
			L((s+\ve)v_\ve)
			\geq \frac{L(s+\ve)-L(u+\ve)}{c\psi(\frac{u+\ve}{s+\ve})}
			=-\xi\frac{\psi(u)}{c\psi(\frac{u+\ve}{s+\ve})}\geq -\xi \psi(s).
		\end{eqnarray*}
		The last inequality follows from~\eqref{hypcom} and the fact that $\psi$ is nondecreasing in~$\mathbb{R}_+$. We also see that $G_D(\xi\psi(s))\in \mathcal{C}^2(D)$ by Proposition~\ref{pror}.(b) and therefore
		\begin{equation*}
			w_\ve=(s+\ve)v_\ve - G_D(\xi\psi(s))\in \mathcal{C}^2(D) \quad\mbox{and} \quad L w_\ve\geq 0\mbox{ in }D.
		\end{equation*}
		On the other hand, for every $z\in \partial D$,   $\lim_{x\to z}G_D(\xi\psi(s))(x)=0$ by Proposition~\ref{pror}.(a). Moreover, seeing that
		$$
		1\leq \frac{s+\ve}{u+\ve}= \frac{u+\ve+G_D(\xi\psi(u))}{u+\ve}\leq 1+\frac{G_D(\xi\psi(s))}{\ve}\quad\mbox{in }D,
		$$
		we also obtain that  $\lim_{x\to z}v_\ve(x)=\Theta(1)=0$ and thereby  $\lim_{x\to z}w_\ve(x)=0$ for all $z\in \partial D$. We then conclude  in view of the maximum principle \cite[Theorem 3.1]{gilbarg1998elliptic} that $w_\ve\leq 0$ in $D$ and consequently 
		\begin{equation*}
			\frac{u+\ve}{s+\ve}\geq \varphi\left(\frac{G_D(\xi\psi(s))}{s+\ve}\right)\quad \mbox{ in } D.
		\end{equation*}
		Letting $\ve$ tend to $0$, we conclude that~\eqref{estf} holds true whenever  $\xi$ and $g$ are  bounded  $\alpha-$H\"older continuous in $D$ and that~$f$ is bounded in~$\partial D$.

		{\em Step 2.} We consider at this step the case where $\xi$ and $g$ are only bounded in $D$ and we still assume that  $f$ is   bounded in $\partial D$. In virtue of  Corollary~4.23 and Theorem~4.9 in~\cite{brezis2010functional}, we choose two sequences $(\xi_n)_{n\geq 1}, (g_n)_{n\geq 1}\subset \mathcal{C}_c^{\infty}(D) $ such that $0\leq \xi_n\leq \|\xi\|_{\infty}$, $0\leq g_n\leq \|g\|_{\infty}$ for all $n\geq 1$ and 
		$$
		\lim_{n\to\infty}\xi_n= \xi,\; \lim_{n\to\infty}g_n= g\;\mbox{a.e in }D.
		$$
		Seeing that $\lim_{n\to\infty}S_D(f,g_n)(x)=s(x)$ and $s(x)>0$ for all $x\in D$, we can impose on the $L$-superharmonic function  $s_n=S_D(f,g_n)$ to be positive everywhere in $D$ for every $n\geq 1$.  It follows from Step~1 above that, for all $n\geq 1$,  
		$$
		\frac{U_D^{\xi_n}(f,g_n)}{s_n}\geq \varphi\left(\frac{G_D(\xi_n\psi(s_n))}{s_n}\right)\quad\mbox{in }D.
		$$
		Hence, \eqref{estf} holds true by Lemma~\ref{lem2}.
		
		{\em Step 3.} We return to the general setting where functions  $\xi, g$  are nonnegative locally bounded in~$D$ and $f\in\mathcal{B}^+(\partial D)$. We define for every $n\geq 1$,    $\xi_n=\xi\wedge n$, $g_n=g\wedge n$, $f_n=f\wedge n$ and let  $ s_n=S_D(f_n,g_{n})>0$ in~$D$.   According to Step~2,  for every $n\geq 1$ and every $k\geq 1$,
		\begin{equation}\label{estk}
			\frac{U_D^{\xi_k}(f_n,g_n)}{s_n}\geq \varphi\left(\frac{G_D(\xi_k\psi(s_n))}{s_n}\right)\quad\mbox{in }D.
		\end{equation}
		On the other hand, for every $k\geq 1$, $\lim_{n\to\infty}U_D^{\xi_k}(f_n,g_n)=U_D^{\xi_k}(f,g)$  by  Proposition~\ref{copro}.(b) and  $\lim_{n\to\infty}G_D(\xi_k\psi(s_n))=G_D(\xi_k\psi(s))$  by the monotone convergence theorem. Hence, letting $n$ tend to infinity in \eqref{estk},  we get that for all $k\geq 1$,
		\begin{equation*}
			\frac{U_D^{\xi_k}(f,g)}{s}\geq \varphi\left(\frac{G_D(\xi_k\psi(s))}{s}\right)\quad\mbox{in }D.	
		\end{equation*} 
		Letting $k$ tend to infinity, we conclude that~\eqref{estf} holds true in virtue of the continuity of  $\varphi$. Observe that  $\lim_{k\to\infty}U_D^{\xi_k}(f,g)=u$  by Proposition~\ref{copro} and $\lim_{k\to\infty}G_D(\xi_k\psi(s))=~G_D(\xi\psi(s))$ by the monotone convergence theorem.
		
	\end{proof}
	
	Now, we  use our previous results  to establish a lower bound estimate of nonnegative solutions to inequality
	\begin{equation}\label{sup-sol}
		-Lu+\xi\Psi(u) \geq  g \quad\mbox{in }D,
	\end{equation} 
	where $\Psi:\mathbb{R}_+\to\mathbb{R}$ is any locally bounded Borel measurable function which is  bounded above  by some function $\psi$ satisfying hypotheses given at the beginning of this section.  Here, we do not need to assume that $\Psi$ is monotone nor that it keeps a constant sign.   We point out that a solution $u$ to inequality~\eqref{sup-sol} should be understood in the distributional sense, that is,  $u:D\to\mathbb{R}$ is continuous  and for every nonnegative function   $\varphi \in \mathcal{C}_{c}^{\infty}(D)$,
	\begin{equation*}
		-\int_{D}uL^{*}\varphi\, d\lambda_d+\int_{D}\xi\Psi(u)\varphi \, d\lambda_d \geq  \int_{D}g\varphi \, d\lambda_d.
	\end{equation*}
	\begin{theorem}\label{thecp}
		Let  $\Psi$ and $\psi$ be as above,  $D$ be any Greenian domain of $\mathbb{R}^d$  (not necessarily bounded) and let  $g\in \mathcal{B}^+(D)$ be locally bounded such that $0<p<\infty$ in $D$ where $p=G_D g$. Then, for every nonnegative solution $u$ to inequality~\eqref{sup-sol} such that   $Lu$ is a locally bounded function in $D$,    
		\begin{equation}\label{estg}
			u\geq p\, \varphi\Big(\frac{G_D (\xi\psi(p))}{p}\Big) \quad\mbox{in }D.
		\end{equation}		
	\end{theorem}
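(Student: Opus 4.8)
The plan is to exhaust $D$ by bounded regular domains, apply the estimate of Theorem~\ref{thp} on each of them, compare $u$ with the corresponding Dirichlet solutions, and pass to the limit. First, since $\xi\ge 0$ and $\Psi(t)\le\psi(t)$ for every $t\ge 0$, a nonnegative solution of~\eqref{sup-sol} also satisfies $-Lu+\xi\psi(u)\ge g$ in $D$. I would fix a sequence $(D_n)_{n\ge 1}$ of bounded regular domains with $\overline{D_n}\subset D_{n+1}$ and $\bigcup_{n\ge 1}D_n=D$, and set $p_n=G_{D_n}g$, $v_n=U_{D_n}^\xi(0,g)$. On the bounded set $D_n$ the function $g$ is bounded, so $p_n=S_{D_n}(0,g)$ is a bounded, continuous, $L$-superharmonic function with $G_{D_n}(\xi\psi(p_n))\le\psi(\|p_n\|_\infty)\,G_{D_n}\xi<\infty$; hence $v_n$ is well defined by Theorem~\ref{thexis}. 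Since $p=G_Dg>0$ in $D$, $g$ is not $\lambda_d$-negligible, so for all large $n$ the function $p_n$ is a nonnegative $L$-superharmonic function not identically zero on the connected set $D_n$, hence $p_n>0$ in $D_n$ by the minimum principle; passing to a subsequence I may thus assume $p_{n_k}>0$ in $D_{n_k}$ for every $k$. Theorem~\ref{thp} applied with $f=0$ on $D_{n_k}$ then yields
\[
v_{n_k}\ \ge\ p_{n_k}\,\varphi\!\left(\frac{G_{D_{n_k}}(\xi\psi(p_{n_k}))}{p_{n_k}}\right)\qquad\text{in }D_{n_k}.
\]

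The core step is the comparison $u\ge v_{n_k}$ in $D_{n_k}$. Writing $v_{n_k}=G_{D_{n_k}}g-G_{D_{n_k}}(\xi\psi(v_{n_k}))$ one checks that the function appearing in Proposition~\ref{lemcom},
\[
s:=u-v_{n_k}+G_{D_{n_k}}(\xi\psi(u))-G_{D_{n_k}}(\xi\psi(v_{n_k}))=u+G_{D_{n_k}}(\xi\psi(u))-G_{D_{n_k}}g,
\]
is continuous on $D_{n_k}$, and by~\eqref{gdp} it satisfies $Ls=Lu-\xi\psi(u)+g$. The hypotheses, $Lu$ locally bounded and $-Lu+\xi\psi(u)\ge g$, then show that $-Ls=\xi\psi(u)-g-Lu$ is a nonnegative locally bounded function; hence, on any ball $B\Subset D_{n_k}$, the function $s-G_B(-Ls)$ is $L$-harmonic with boundary values $s$, so $s=H_Bs+G_B(-Ls)\ge H_Bs$. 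Thus $s$ satisfies $(\mathcal{M}_*)$ and is $L$-superharmonic by Proposition~\ref{mprop}. Moreover $u\ge 0$, while $G_{D_{n_k}}(\xi\psi(u))$ and $G_{D_{n_k}}g$ vanish on $\partial D_{n_k}$ (Proposition~\ref{pror}(a), since $D_{n_k}$ is regular and the two densities are bounded on $\overline{D_{n_k}}$); hence $\liminf_{x\to z}s(x)\ge 0$ for every $z\in\partial D_{n_k}$ and $s\ge 0$ in $D_{n_k}$ by Proposition~\ref{prinmin}(b). Proposition~\ref{lemcom}, with $\psi$ in place of $\Psi$, now gives $u\ge v_{n_k}$ in $D_{n_k}$.

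It remains to pass to the limit. Fix $x\in D$; then $x\in D_{n_k}$ for all large $k$. Since $G_{D_{n_k}}\uparrow G_D$ (cf.~\eqref{lgreen}) and $g\ge 0$, monotone convergence gives $p_{n_k}(x)\uparrow p(x)$, and, using in addition that $\psi$ is nondecreasing, also $G_{D_{n_k}}(\xi\psi(p_{n_k}))(x)\uparrow G_D(\xi\psi(p))(x)$ in $[0,\infty]$. Because $p(x)>0$ and $\varphi$ is continuous and nonincreasing on $\mathbb{R}_+$ with $\varphi\equiv 0$ beyond $\ell$, letting $k\to\infty$ in
\[
u(x)\ \ge\ v_{n_k}(x)\ \ge\ p_{n_k}(x)\,\varphi\!\left(\frac{G_{D_{n_k}}(\xi\psi(p_{n_k}))(x)}{p_{n_k}(x)}\right)
\]
gives $u(x)\ge p(x)\,\varphi\big(G_D(\xi\psi(p))(x)/p(x)\big)$, which is~\eqref{estg}. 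I expect the main obstacle to be the comparison step, namely turning the distributional supersolution inequality (with $Lu$ only locally bounded) into the statement that $s$ is a \emph{nonnegative} $L$-superharmonic function: this needs the local Riesz decomposition on balls together with the correct boundary behaviour on $\partial D_{n_k}$. Once that is in place, the limiting argument is routine, by monotone convergence of the Green kernels and continuity of $\varphi$.
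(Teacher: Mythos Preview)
Your proof is correct and follows essentially the same route as the paper: exhaust $D$ by regular domains, apply Theorem~\ref{thp} on each, compare $u$ with $v_{n_k}$, and let $k\to\infty$. The paper condenses your comparison step into the single observation that $u=U_{D_n}^\xi(u,g_0)$ with $g_0:=-Lu+\xi\psi(u)\ge g$, so that~\eqref{compf1f2} immediately gives $u\ge v_n$; this is exactly the local Riesz decomposition you spell out via Proposition~\ref{lemcom}.
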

	\begin{proof} 	 Let  $u$ be a  solution to inequality~\eqref{sup-sol}, define $g_0:=-Lu+\xi\psi(u)$ and choose a sequence  $(D_{n})_{n\geq 1}$  of regular bounded   domains such that $D_n\Subset D_{n+1}$ for all $n\geq 1$ and $\cup_{n\geq 1}D_n=D$. For every $n\geq 1$, define $v_n:=U_{D_n}^\xi(0,g)$, that is,  
		\begin{equation*}
			\left\{\begin{array}{rcll}
				-Lv_n+\xi\psi(v_n) &=& g &\mbox{in }D_n,
				\\ v_n &=& 0&\mbox{on }\partial D_n.
			\end{array}\right.
		\end{equation*}	
		Since $u\geq 0$ and since $g_0\geq -Lu+\xi\Psi(u)\geq  g$ in $D$, it follows immediately from~\eqref{compf1f2} that $	u=U_{D_n}^\xi(u,g_0)\geq  v_n$ in~$D_n$. On the other hand, we obviously see that the sequence  $(p_n)_{n\geq 1}$ of $L$-superharmonic functions in~$D_n$ given by  $p_n=G_{D_n}g=S_{D_n}(0,g)$, is nondecreasing and pointwise convergent to~$p>0$ in~$D$. Then, there exists a subsequence $(p_{n_k})_{k\ge 1}$ such that $p_{n_k}>0$ in $D_{n_k}$ for every $k\geq 1$.  We then apply Theorem~\ref{thp} in the  regular domain~$D_{n_k}$ to obtain 
		\begin{equation*}
			v_{n_k}\geq p_{n_k} \vphi\left(\frac{G_{D_{n_k}}(\xi\psi(p_{n_k}))}{p_{n_k}}\right)\quad\mbox{in }D_{n_k}. 
		\end{equation*}
		Hence, seeing that $p_{n_k}\leq p$ in $D_{n_k}$ and using   monotonicities of functions $\psi,\vphi$, we get that for all $k\geq 1$, 
		\begin{equation*}
			u\geq p_{n_k} \vphi\left(\frac{G_{D_{n_k}}(\xi\psi(p))}{p_{n_k}}\right)\quad\mbox{in }D_{n_k}. 
		\end{equation*}
		Consequently, by letting  $k$ tend to $\infty$ we obtain estimate~\eqref{estg} and the proof is complete. 
	\end{proof}
	
	In order to illustrate our results regarding  estimates of type~\eqref{estg}, we  give the following examples which deal with three different kinds of functions $\psi$.
	
	\medskip	
	\noindent{\it Example 1- Polynomial functions: }
	For $\psi(t)=t^\gamma$  we have   $\ell= 1/(1-\gamma)$ if $0<\gamma <1$ and $\ell=\infty$ if $\gamma \geq 1$. By elementary computation of $\varphi(t)$, Theorem~\ref{thecp} above yields that estimates given by~\eqref{estgr}  hold true for every   nonnegative solution to inequality~\eqref{sup-sol-g}. 	As  mentioned above, these estimates are already obtained by Grigor'yan and Verbitsky~\cite{grigor2019pointwise} provided that the solution  $u$ is  twice continuously  differentiable  in~$D$. Notice that, in this work, we deal  with continuous  solutions in the distributional sense. More generally, for  $\psi(t)=at+bt^\gamma$  where $a,b,\gamma>0$, we obtain
	$$
	\ell=\left\{\begin{array}{cl}
		\displaystyle\frac{a+b}{a(1-\gamma)}\log\frac{a+b}{b}& \mbox{if }\gamma<1,
		\\	\vspace{0,3cm}
		\infty & \mbox{if }\gamma>1,
	\end{array}\right.
	$$
	and by simple computation of  $\varphi(t)$, we deduce that  every nonnegative solution $u$ to
	\begin{equation}
		-Lu+\xi(au+bu^\gamma)\geq g\quad \mbox{in }D
	\end{equation}	
	satisfies, for all $x\in D$,
	\begin{equation*}
		u(x)\geq \left\{\begin{array}{ll}
			\displaystyle{p(x)\left(\frac{a+b}{a}\exp\Big(\frac{a}{a+b}(\gamma-1)\frac{G_D\xi(ap+bp^{\gamma})(x)}{p(x)} \Big)-\frac{b}{a}\right)_+^{1/(1-\gamma)}} & \mbox{if }\gamma<1,
			\vspace{0,3cm}
			\\ \displaystyle{ p(x)\left(\frac{a+b}{a}\exp\Big((\gamma-1)\frac{G_D\xi(ap+bp^{\gamma})(x)}{p(x)} \Big)-\frac{b}{a}\right)^{1/(1-\gamma)}} & \mbox{if }\gamma>1.
		\end{array}\right.
	\end{equation*}

	\medskip	
	\noindent{\it Example 2- Exponential functions:} For $\psi(t)=\sinh t=(e^t-e^{-t})/2 $, we prove that  $l=\infty$ and elementary computation  yields  that for every $t\geq 0$,
	$$
	\vphi(t)=2\,\mbox{artanh}\left(\alpha e^{-t}\right)\quad \mbox{with}\quad\alpha=\tanh\frac12.
	$$
	Hence, we conclude that for	every nonnegative solution $u$ to inequality
	\begin{equation}
		-Lu+\xi \sinh u\geq g\quad \mbox{in }D
	\end{equation}
	and for all $x\in D$, we have
	$$
	u(x)\geq 2\, p(x)\,\mbox{artanh}\left(\alpha\exp\Big({-\frac{G_D(\xi\sinh(p))(x)}{p(x)}}\Big)\right).
	$$

	\medskip	
	\noindent{\it Example 3- Logarithmic functions:}
	Consider $\psi(t)=a(1+bt)\log(1+bt)$ with $a,b>0$. By simple computation, we get that $\ell=\infty$ and for every real $t\geq 0$,
	$$
	\varphi(t)=\frac{(1+b)^{e^{-t}}-1}{b}.
	$$
	In particular, for $a=b=1$, it follows that every nonnegative solution $u$ to inequality 
	\begin{equation}
		-Lu+ \xi (1+u)\log(1+u)\geq g\quad \mbox{in }D,
	\end{equation}
	satisfies, for all $x\in D$,
	$$
	u(x)\geq p(x)\left(
	2^{\displaystyle{\exp\left( -\frac{G_D(\xi(1+p)\log(1+p))(x)}{p(x)}
			\right)}}
	-1\right)
	$$


\begin{thebibliography}{10}
	
	\bibitem{armitage2001classical}
	D.~H. Armitage and S.~J. Gardiner.
	\newblock {\em Classical potential theory}.
	\newblock Springer Monographs in Mathematics. Springer-Verlag London, Ltd.,
	London, 2001.
	
	\bibitem{fredj2013comparison}
	M.~Ben~Fredj and K.~El~Mabrouk.
	\newblock Comparison of harmonic kernels associated with a class of semilinear
	elliptic equations.
	\newblock {\em Probab. Math. Statist.}, 33(1):29--44, 2013.
	
	\bibitem{brezis2010functional}
	H.~Brezis.
	\newblock {\em Functional analysis, {S}obolev spaces and partial differential
		equations}.
	\newblock Universitext. Springer, New York, 2011.
	
	\bibitem{Cao2017}
	D.~Cao and I.~Verbitsky.
	\newblock Nonlinear elliptic equations and intrinsic potentials of {W}olff
	type.
	\newblock {\em J. Funct. Anal.}, 272(1):112--165, 2017.
	
	\bibitem{Cao2016}
	D.~T. Cao and I.~E. Verbitsky.
	\newblock Pointwise estimates of {B}rezis-{K}amin type for solutions of
	sublinear elliptic equations.
	\newblock {\em Nonlinear Anal.}, 146:1--19, 2016.
	
	\bibitem{chung1995brownian}
	K.~L. Chung and Z.~X. Zhao.
	\newblock {\em From {B}rownian motion to {S}chr\"{o}dinger's equation}, volume
	312 of {\em Grundlehren der Mathematischen Wissenschaften [Fundamental
		Principles of Mathematical Sciences]}.
	\newblock Springer-Verlag, Berlin, 1995.
	
	\bibitem{dynkin2002diffusions}
	E.~B. Dynkin.
	\newblock {\em Diffusions, superdiffusions and partial differential equations},
	volume~50 of {\em American Mathematical Society Colloquium Publications}.
	\newblock American Mathematical Society, Providence, RI, 2002.
	
	\bibitem{garcia2009existence}
	J.~Garc\'{\i}a-Meli\'{a}n, J.~D. Rossi, and J.~C. Sabina~de Lis.
	\newblock Existence and uniqueness of positive solutions to elliptic problems
	with sublinear mixed boundary conditions.
	\newblock {\em Commun. Contemp. Math.}, 11(4):585--613, 2009.
	
	\bibitem{ghardallou2019positive}
	Z.~Ghardallou.
	\newblock Positive solutions to sublinear elliptic problems.
	\newblock {\em Colloq. Math.}, 155(1):107--125, 2019.
	
	\bibitem{gilbarg1998elliptic}
	D.~Gilbarg and N.~S. Trudinger.
	\newblock {\em Elliptic partial differential equations of second order}.
	\newblock Classics in Mathematics. Springer-Verlag, Berlin, 2001.
	\newblock Reprint of the 1998 edition.
	
	\bibitem{grigor2019pointwise}
	A.~Grigor'yan and I.~Verbitsky.
	\newblock Pointwise estimates of solutions to semilinear elliptic equations and
	inequalities.
	\newblock {\em J. Anal. Math.}, 137(2):559--601, 2019.
	
	\bibitem{grigor2020}
	A.~Grigor'yan and I.~Verbitsky.
	\newblock Pointwise estimates of solutions to nonlinear equations for nonlocal
	operators.
	\newblock {\em Ann. Sc. Norm. Super. Pisa Cl. Sci. (5)}, 20(2):721--750, 2020.
	
	\bibitem{herve1962recherches}
	R.-M. Herv\'{e}.
	\newblock Recherches axiomatiques sur la th\'{e}orie des fonctions
	surharmoniques et du potentiel.
	\newblock {\em Ann. Inst. Fourier (Grenoble)}, 12:415--571, 1962.
	
	\bibitem{hirata2018}
	K.~Hirata.
	\newblock Two-sided estimates for positive solutions of superlinear elliptic
	boundary value problems.
	\newblock {\em Bull. Aust. Math. Soc.}, 98(3):465--473, 2018.
	
	\bibitem{hirata2021}
	K.~Hirata and A.~Seesanea.
	\newblock The {D}irichlet problem for sublinear elliptic equations with source.
	\newblock {\em Bull. Sci. Math.}, 171:Paper No. 103030, 20, 2021.
	
	\bibitem{hueber1982}
	H.~Hueber and M.~Sieveking.
	\newblock Uniform bounds for quotients of {G}reen functions on
	{$C^{1,1}$}-domains.
	\newblock {\em Ann. Inst. Fourier (Grenoble)}, 32(1):vi, 105--117, 1982.
	
	\bibitem{zribi2001}
	H.~M\^{a}agli and M.~Zribi.
	\newblock Existence and estimates of solutions for singular nonlinear elliptic
	problems.
	\newblock {\em J. Math. Anal. Appl.}, 263(2):522--542, 2001.
	
	\bibitem{marcus2014nonlinear}
	M.~Marcus and L.~V\'{e}ron.
	\newblock {\em Nonlinear second order elliptic equations involving measures},
	volume~21 of {\em De Gruyter Series in Nonlinear Analysis and Applications}.
	\newblock De Gruyter, Berlin, 2014.
	
	\bibitem{miranda1970partial}
	C.~Miranda.
	\newblock {\em Partial differential equations of elliptic type}.
	\newblock Ergebnisse der Mathematik und ihrer Grenzgebiete, Band 2.
	Springer-Verlag, New York-Berlin, 1970.
	\newblock Second revised edition. Translated from the Italian by Zane C.
	Motteler.
	
	\bibitem{mohammed2010ground}
	A.~Mohammed.
	\newblock On ground state solutions to mixed type singular semi-linear elliptic
	equations.
	\newblock {\em Adv. Nonlinear Stud.}, 10(1):231--244, 2010.
	
	\bibitem{serrin1954harnack}
	J.~Serrin.
	\newblock On the {H}arnack inequality for linear elliptic equations.
	\newblock {\em J. Analyse Math.}, 4:292--308, 1955/56.
	
\end{thebibliography}
\end{document}